\documentclass[reqno,twoside]{amsart} 

\usepackage{amsmath,amsthm,amssymb,amsfonts,mathrsfs,color}
\usepackage{stmaryrd}
\usepackage{empheq}
\usepackage{cleveref}
\usepackage{dsfont}
\usepackage{yhmath}

\usepackage{enumitem}
\newcounter{saveenum}

\usepackage{pgfplots}
\pgfplotsset{compat=1.15}
\usepackage{mathrsfs}
\usetikzlibrary{arrows}
\usepackage{relsize}

\definecolor{qqttzz}{rgb}{0.,0.2,0.6}
\definecolor{uuuuuu}{rgb}{0.26666666666666666,0.26666666666666666,0.26666666666666666}
\definecolor{ffqqqq}{rgb}{1.,0.,0.}
\definecolor{qqqqff}{rgb}{0.,0.,1.}

\theoremstyle{plain}
\begingroup
\newtheorem{thm}{Theorem}[section]

\newtheorem{lem}[thm]{Lemma}
\newtheorem{prop}[thm]{Proposition}

\endgroup

\theoremstyle{definition}
\begingroup

\newtheorem{rem}[thm]{Remark}
\endgroup

\numberwithin{equation}{section}

\newcommand{\res}{\mathop{\hbox{\vrule height 7pt width .5pt depth 0pt
\vrule height .5pt width 6pt depth 0pt}}\nolimits}

\newcommand{\ds}{\displaystyle}

\newcommand{\N}{\mathbb N} 
\newcommand{\R}{\mathbb R}

\newcommand{\Ms}{{\mathbb M}^{N{\times}N}_{\rm sym}}

\newcommand{\E}{{\mathcal E}}
\newcommand{\F}{{\mathcal F}}
\newcommand{\G}{{\mathcal G}}

\newcommand{\e}{\varepsilon}

\newcommand{\LL}{{\mathcal L}}
\newcommand{\HH}{{\mathcal H}}
\newcommand{\KK}{ K}
\newcommand{\M}{{\mathcal M}}
\newcommand{\C}{{\mathcal C}}
\newcommand{\A}{{\mathcal A}}
\newcommand{\T}{{\mathcal T}}

\newcommand{\infconv}{ \, {\scriptstyle \Box}  \, }

\renewcommand{\bar}[1]{\overline{#1}}
\newcommand{\norme}[1]{ \left\Vert #1 \right\Vert}

\let\O=\Omega

\setlength{\textheight}{21cm} \setlength{\textwidth}{15cm}
\setlength{\parindent}{0.4cm} \setlength{\topmargin}{0cm}
\setlength{\oddsidemargin}{0.8cm} \setlength{\evensidemargin}{0.8cm}

\begin{document}
 
\title[Perfect plasticity versus Damage]{Perfect plasticity versus Damage: an unstable interaction between irreversibility and $\Gamma$-convergence through variational evolutions.}
\author[E. Bonhomme]{\'Elise Bonhomme}

\address[E. Bonhomme]{Universit\'e Paris-Saclay, CNRS,  Laboratoire de math\'ematiques d'Orsay, 91405, Orsay, France.}
\email{elise.bonhomme@universite-paris-saclay.fr}

\begin{abstract} 
This paper addresses the question of the interplay between relaxation and irreversibility through quasi-static evolutions in damage mechanics, by inquiring the following question: can the quasi-static evolution of an elastic material undergoing a rate-independent process of plastic deformation be derived as the limit model of a sequence of quasi-static brittle damage evolutions? This question is motivated by the static analysis performed in \cite{BIR}, where the authors have shown how the brittle damage model introduced by Francfort and Marigo (see \cite{FM1,FM}) can lead to a model of Hencky perfect plasticity. Problems of damage mechanics being rather described through evolution processes, it is natural to extend this analysis to quasi-static evolutions, where the inertia is neglected. We consider the case where the medium is subjected to time-dependent boundary conditions, in the one-dimensional setting. The idea is to combine the scaling law considered in \cite{BIR} with the quasi-static brittle damage evolution introduced in \cite{FG} by Francfort and Garroni, and try to understand how the irreversibility of the damage process will be expressed in the limit evolution. Surprisingly, the interplay between relaxation and irreversibility is not stable through time evolutions. Indeed, depending on the choice of the prescribed Dirichlet boundary condition, the effective quasi-static damage evolution obtained may not be of perfect plasticity type.
\end{abstract}

\maketitle

\noindent
{\bf Keywords }Variational models, Rate-independent processes, Perfect plasticity, Damage models, Homogenization, Functions of bounded variation, $\Gamma$-convergence

\medskip
\noindent
{\bf Mathematics Subject Classification }49J45, 26A45, 35Q74, 74C05, 74G65, 74Q10, 74R05

\section{Introduction}

\subsection{Interplay between $\Gamma$-convergence and variational evolutions}
Rate-independent systems have proved to be very useful in many problems of continuum mechanics dealing with dissipative phenomena such as elastoplasticity, damage or fracture. These models share similar energetic formulations which put in competition a stored energy and a dissipated one which does not depend on the speed of the loading (see \cite{MM} and references therein). When the model involves a scaling parameter, say $\e >0$, a natural attempt consists first in studying static models before treating the related evolutions. Such considerations can be dealt within a more general setting, such as in \cite{MRS} where the authors derive a sufficient condition in order for a family of parametrized time-dependent energy functionals of the form $E_\e + D_\e$ to approximate the expected effective energy $E_0 + D_0$ in the limit, where $E_\e$ and $D_\e$ respectively stand for the stored and dissipated parts of the total energy and $E_0$ and $D_0$ are their corresponding $\Gamma$-limits. In a nutshell, for $\e \in [0,\infty)$, if $E_\e$ stands for the stored energy and $D_\e(q,\tilde q)$ stands for the minimal energy dissipated as the medium changes from the states $q$ to $\tilde q$, an evolution $q_\e$ is called an energetic solution during the time interval $[0,T]$ if it satisfies the following stability and energy balance conditions
$$
E_\e(t,q_\e(t)) \leq E_\e(t,q) + D_\e (q_\e(t),q) \quad \text{for all admissible state } q
$$
$$
E_\e(t,q_\e(t)) + {\rm Diss}_\e(q_\e;0,t) = E_\e(0,q_\e(0)) + \int_0^t \partial_s E(s,q_\e(s)) \, ds
$$
for all time $t \in [0,T]$, where the cumulated dissipation ${\rm Diss}_\e(q_\e;0,t)$ is the total variation of $q_\e$ with respect to the "distance" $D_\e$ in the time interval $[0,t]$. Given energetic solutions $\{ q_\e \}_{\e >0}$ converging to some $q_0$, the authors derive in \cite[Theorem 3.1]{MRS} a sufficient condition in order for $q_0$ to be an energetic solution of the limit problem associated to $E_0$ and $D_0$. In particular, a joint condition on the interplay between the stored and dissipated energies is needed. Unfortunately, even if we have separate $\Gamma$-convergence of $E_\e$ and $D_\e$ to $E_0$ and $D_0$ respectively, the $\Gamma$-limit of the total energies $E_\e + D_\e$ might differ from the sum of the $\Gamma$-limits. This particular issue is adressed in \cite{BCGS} where the authors consider a family of quasi-static evolutions involving internal oscillating energies $E_\e$ and dissipations $D_\e$ and show that the $\Gamma$-limit of the sum can still be additively split as the sum of a stored energy $\tilde E_0$ and a dissipated one $\tilde D_0$, even though they a priori differ from $E_0$ and $D_0$. More generally, the interaction between $\Gamma$-convergence and variational evolutions frequently involves unexpected and tedious non-commutability phenomena in various contexts. For instance, such considerations have attracted renewed interest in the derivation of lower dimensional models for thin structures in the evolutionary setting, in the context of elastoplasticity \cite{DM,LM}, crack propagation \cite{Bfracture} or delamination problems \cite{MRT}, without being exhaustive. Another case study concerns the stability of unilateral minimality properties through variational evolutions, as in fracture mechanics \cite{GP} or periodic homogenization in multi-phase elastoplasticity \cite{FGiacomini}.

\subsection{Motivation and results}
In the static analysis led in \cite[Theorem 3.1]{BIR}, the authors consider a family of brittle damage energies (introduced in \cite{FM1,FM}) within a specific scaling law, and show how an asymptotic analysis in a singular limit can lead to a model of Hencky perfect plasticity. More precisely, they introduce a small parameter $\e >0$ and consider a linearly elastic material which can only exist in one of two states: a damaged one whose elastic properties are described via a symmetric fourth-order Hooke Law $ \e A_0$ and a sound one with a stronger elasticity tensor $A_1$, satisfying $ \e A_0 < A_1 $ in the sense of quadratic forms acting on $\Ms$. Introducing the characteristic function of the damaged region, $\chi \in L^\infty(\O;\{0,1 \})$, and following the model introduced by Francfort and Marigo, the total energy associated to a displacement $u \in H^1(\O;\R^N)$ and $\chi$ is given as the sum of the elastic energy stored inside the material and a dissipative cost, taken as proportional to the volume of the damaged zone:
$$
\int_\O \frac12 \left( \chi \e A_0 + (1-\chi) A_1 \right) e(u):e(u) \, dx + \frac{\kappa}{\e} \int_\O \chi \, dx 
$$
where $\kappa / \e  > 0$ is the material toughness and the symmetric gradient $e(u) =  \left( \nabla u + \nabla u ^T \right) / 2$ is the linearized elastic strain. As the parameter $\e$ tends to $0$, the elasticity coefficients of the weak material degenerate to zero while the diverging character of $\kappa / \e$ forces the damaged region to concentrate on vanishingly small sets. It is by now well-known that for fixed $\e > 0$, the minimization of the above energy with respect to the couple $(u,\chi)$ is ill-posed, so that the energy must be relaxed. By doing so, the brittle character of the damage is lost as minimizing sequences tend to develop microstructures and  the class of admissible solutions is extended to the set of all possible homogenized elasticities, resulting from fine mixtures of strong and weak material (see \cite{FM,FG,Allaire,AL}). Given some displacement $u$ and minimizing first pointwise with respect to $\chi$, one can check that the asymptotic analysis of these energies is equivalent to finding the $\Gamma$-limit of the family of functionals
$$
\int_\O \underset{=: W_\e(e(u))}{\underbrace{\min \left( \frac12 \e A_0 e(u):e(u) + \frac{\kappa}{ \e} \, ; \, \frac12 A_1 e(u):e(u) \right)}} \, dx 
$$
when $\e \searrow 0$, or still the $\Gamma$-limit of their lower semicontinuous envelopes, given by 
$$\E_\e(u) := \int_\O SQ W_\e(e(u)) \, dx $$ 
where $SQW_\e$ is the symmetric quasiconvex envelope of $W_\e$ (see \cite{AL}). An explicit formula of $SQW_\e$ is generally unknown, as its expression is obtained through a minimization among all attainable composite materials (the $G$-closure set, see \cite{Allaire}) and makes use of the Hashin-Shtrikman bounds. Slightly adapting the proof of \cite[Theorem 3.1]{BIR} (see Appendix \ref{section:appendix} for a precise statement and its proof), the authors have shown that when $A_0$ and $A_1$ are isotropic Hooke Laws defined by 
$$
A_i \xi = \lambda_i {\rm tr}(\xi) {\rm Id} + 2 \mu_i \xi \text{ for all } \xi \in \Ms
$$
where $\lambda_1 > \lambda_0 > 0$ and $\mu_1 > \mu_0 > 0$ are the Lam\'e coefficients, the brittle damage energies $ \E_\e$ $\Gamma$-converges in $L^1(\O;\R^N)$ as $\e \searrow 0$ to the functional
$$
\E : u \in BD(\O) \mapsto \int_\O \bar W(e(u)) \, dx + \int_\O I_\KK^* \left( \frac{ d E^su}{d|E^su|} \right) \, d |E^su| + \int_{\partial \O} I_\KK^* \left( (w-u) \odot \nu \right) \, d \HH^{N-1} 
$$
where $\KK = \{ \tau \in \Ms \, : \, G(\tau) \leq 2 \kappa \}$ is a closed convex set, $G : \Ms \to \R$ is defined by 
\begin{equation}\label{eq:G}
G(\tau) := \begin{cases}
\frac{\tau_1^2}{\lambda_0 + 2 \mu_0} & \text{ if } \frac{\lambda_0 + 2 \mu_0}{2(\lambda_0 + \mu_0)} \left( \tau_1 + \tau_N \right) < \tau_1, \\
\frac{(\tau_1 - \tau_N)^2}{4\mu_0} + \frac{ (\tau_1 + \tau_N)^2}{4(\lambda_0 + \mu_0)} & \text{ if } \tau_1 \leq  \frac{\lambda_0 + 2 \mu_0}{2(\lambda_0 + \mu_0)} \left( \tau_1 + \tau_N \right) \leq \tau_N , \\
\frac{\tau_N^2}{\lambda_0 + 2 \mu_0} & \text{ if } \tau_N <  \frac{\lambda_0 + 2 \mu_0}{2(\lambda_0 + \mu_0)} \left( \tau_1 + \tau_N \right) ,
\end{cases}
\end{equation}
with $\tau_1 \leq ... \leq \tau_N$ the ordered eigenvalues of $\tau \in \Ms$, $\overline W$ is the infimal convolution 
$$\overline W  : \xi \in \Ms \mapsto \underset{ \tau \in \Ms}{\inf} \, \left\{ \frac12 A_1 \tau:\tau + I_\KK^*( \xi - \tau) \right\}  $$
and 
$$I_\KK^* : \xi \in \Ms \mapsto \underset{ \tau \in \KK}{\sup} \, \left\{ \tau : \xi  \right\}$$
is the support function of $\KK$, standing for the plastic dissipation potential (see \cite{Suquet}). In particular, for all displacement $u \in BD(\O)$, writing the Radon-Nikod\'ym decomposition of $Eu$ with respect to Lebesgue
$$
Eu = e(u) \LL^N \res \O + E^s u
$$
and using the definition of the infimal convolution, we infer that the absolutely continuous linearized strain can be additively split as $e(u) = e + p^a$ with $e$ and $p^a \in L^1(\O;\Ms)$ such that 
$$W(e(u)) = \frac12 A_1 e:e + I_\KK^*(p^a) \quad \text{$\LL^1$-a.e. in $\O$.}$$
Therefore, defining $ p = E^s u + p^a \LL^N \res \O + (w - u) \odot \nu \HH^{N-1} \res \partial \O$, one can check that 
$$ 
Eu = e \LL^N \res \O + p \res \O,
$$
so that
$$
\E(u) = \int_\O \frac12 A_1 e:e \, dx + \int_{\bar \O} I_\KK^* \left( \frac{d p}{d \left\lvert p \right\rvert} \right) \, d \left\lvert p \right\rvert 
$$
is indeed the energy functional corresponding to Hencky perfect plasticity, as mentionned in \cite{Mora}.

\medskip

The objective of the present paper is to extend this work to the quasi-static case in a one-dimensional setting. More specifically, we consider a linearly elastic material whose reference configuration is $\O = (0,L)$, a bounded open interval, with toughness $\kappa > 0$ and stiffness $a_1 >0$, subjected to a prescribed time-dependent displacement on $\partial \O = \{ 0 , L \}$:
$$
w \in AC([0,T];H^1(\R)).
$$
Adapting the analysis led in \cite{BIR} to the quasi-static setting, we consider a family of quasi-static brittle damage evolutions (introduced in \cite{FG}) within the same specific scaling law. More precisely, we introduce a small parameter $\e >0$ and apply \cite[Theorem 2]{FG} to a linearly elastic material which can only exist in a damaged state or in a sound state with respective stiffness $ 0 < \e a_0 < a_1 $, subjected to the prescribed displacement $w$ on $\partial \O$ and with toughness $\kappa / \e$. Thus, we recover a triple
\begin{equation}\label{eq:brittle evolution}
(u_\e, \Theta_\e, a_\e) : [0,T] \to H^1(\O;\R) \times L^\infty(\O;[0,1]) \times L^\infty(\O;[0,a_1])
\end{equation} 
discribing the quasi-static evolution of brittle damage undergone by the medium for a fixed $\e >0$. In other words, the state of the damaged medium (for $\e >0$ fixed) at time $t \in [0,T]$ is dictated by the displacement $u_\e(t)$ while its elastic properties are given by the stiffness $a_\e(t) \in \G_{\Theta_\e(t)} (\e a_0,a_1)$ (see Section \ref{section:notation}), where $\Theta_\e(t)$ is the volume fraction of sound material $a_1$ (see Proposition \ref{prop:FG} below). We next wish to perform the asymptotic analysis of these evolutions when taking the limit $\e \searrow 0$, in the hope of recovering a quasi-static evolution of perfect plasticity in the limit, of which we briefly recall the fundamentals now.

\medskip

In \cite{Suquet}, Suquet proposed the (first complete) mathematical kinematical framework adapted to evolutions of perfect plasticity for dissipative materials and proves the existence of solutions in terms of the displacement field, under the assumption of small deformations. Heuristically, let $\O=(0,L)$ be the configuration at rest of an elastoplastic medium with stiffness $a_1$, whose evolution is driven by a time-dependent boundary displacement $w : [0,T] \times \R \to  \R$ prescribed on $\partial \O$. The behaviour of the material is described via three kinematic variables $(u,e,p)$, where the displacement $u : [0,T] \times \O  \to \R$ is such that the linearized strain $D u = e + p$ is additively decomposed in an elastic strain $e : [0,T] \times \O \to \R$ and a plastic strain $p : [0,T] \times \O \to \R$ accounting for the reversible and permanent deformations respectively. In the quasi-static setting, where inertia is neglected, the evolution satisfies the {\it Constitutive Equations}
$$\begin{cases}
			a_1 e(t) =  \sigma(t) \\
			\sigma(t) \in \KK  \\
			\dot p (t) : \sigma(t) = \underset{\tau \in \KK}{\sup} \, \dot p(t) : \tau  
		 \end{cases} \quad \text{in } \O$$
and the {\it Equilibrium Equation} 
$$\begin{cases}
 \sigma'(t) = 0 \text{ in }  \O  \\
u(t) = w(t) \text{ on } \partial \O 
\end{cases} $$
at all time $ t \in [0,T]$. In other words, the {\it Constitutive Equations} mean that the elastic strain is proportional to the stress $\sigma$, which is constrained to lie in a given closed and convex set $\KK \subset \R$ standing for the elasticity domain and whose boundary $\partial \KK$ is referred to as the yield surface. The last assertion is nothing but Hill's maximum work principle. More recently, quasi-static plastic evolutions have been revisited into a variational evolution formulation for rate-independent processes. The problem has been interpreted in an energetic form that does not require the solutions to be smooth in time nor in space, making use of modern tools of the calculus of variations instead (see \cite{MM,DMDSM,Mora} and references therein). Following \cite[Definition 4.2]{DMDSM}, a quasi-static evolution of perfect plasticity is a triple 
$$(u,e,p) : [0,T] \to BV(\O) \times L^2(\O;\R) \times \M (\bar \O; \R)$$
subjected to the relaxed boundary condition $p(t) \res \partial \O = (w(t) - u(t)) \left( \delta_L - \delta_0 \right) $ and satisfying the additive decomposition $Du(t) = e(t) \LL^1 \res \O + p(t) \res \O$, such that
\begin{equation}\label{eq:def plasticity}
\begin{gathered}
\sigma(t) = a_1 e(t), \quad \sigma'(t) = 0 \text{ in } H^{-1}(\O), \quad \sigma(t) \in \KK \, \LL^1 \text{-a.e. in } \O,   \\
p : [0,T] \to \M(\bar \O ; \R) \text{ has bounded variation} 
\end{gathered}
\end{equation}
and the Energy Balance
\begin{equation}\label{eq:EB}
\frac12 \int_\O a_1 e(t):e(t) \, dx + {\rm Diss}_\KK (p;0,t) = \frac12 \int_\O a_1 e(0):e(0) \, dx + \int_0^t \int_\O \sigma(s) \dot w'(s,x) \, dx ds
\end{equation}
holds, for all time $t \in [0,T]$. The dissipative plastic cost cumulated during the time interval $[0,t]$ associated to $p$, ${\rm Diss}_\KK(p;0,t)$, is defined as
$$
\sup \Biggl\{ \sum_{i=1}^{n} \int_{\bar \O} I_\KK^* \left( \frac{d (p(s_i) - p(s_{i-1}))}{d  \left\lvert p(s_i) - p(s_{i-1})\right\rvert } \right)  \, d \left\lvert p(s_i) - p(s_{i-1})\right\rvert \, : \, n \in \N, \, 0= s_0 \leq s_1 \leq ... \leq s_n = t  \Biggr\}	.		 
$$

\medskip

The existence of quasi-static evolutions is (by now classically) obtained by performing a time-discretization and solving incremental minimization problems inductively, before letting the time-step tend to $0$ (see \cite{MM, Cr1, Cr2, GLuss, DMDSM} for instance). The purpose of the present paper is not to prove the existence of quasi-static evolutions of perfect plasticity, but to establish whether such evolutions can be derived from the quasi-static brittle damage evolutions introduced above in \eqref{eq:brittle evolution}. By analogy with the static analysis of \cite{BIR}, we expect to derive the same closed convex set of plasticity $\KK$ which is given by the closed interval
$$
\KK := \big[ -\sqrt{2 \kappa a_0}, \sqrt{2 \kappa a_0} \, \big]
$$
as one can check that $G(\tau) = \tau^2 / a_0 $ for all $\tau \in \R$ in this simplified setting. In particular, the support function of $\KK$ is simply given by 
$$
I^*_\KK = \sqrt{2 \kappa a_0} \left\vert \, \cdot \, \right\rvert.
$$
Therefore, following the variational framework of quasi-static perfect plasticity recalled above (see \cite{DMDSM,MM}), the dissipative cost cumulated during a time interval $[s,t] \subset [0,T]$ due to a time dependent Radon measure $q :[0,T] \to \M([0,L])$ is defined as
$$ {\rm Diss}_\KK (q; s,t) = \sqrt{2 \kappa a_0} \, \mathcal{V}(q;s,t) $$
where
$$\mathcal{V}(q;s,t) = \sup \left\{  \sum_{i=1}^{n}   \left\lvert q(s_i) - q(s_{i-1})\right\rvert \big( [0,L] \big) \, : \, n \in \N, \, s = s_0 \leq s_1 \leq ... \leq s_n = t  \right\}$$ 
is the total variation of $q$ during the time interval $[s,t]$. The question inquired in the present work is then: when passing in the limit $\e \searrow 0$ (in some sense detailed in the next sections) in the above brittle damage evolutions \eqref{eq:brittle evolution}, can we derive a quasi-static evolution of perfect plasticity 
$$(u,e,p) : [0,T] \to BV((0,L)) \times L^2((0,L);\R) \times \M ([0,L]; \R) $$
satisfying \eqref{eq:def plasticity} and \eqref{eq:EB}? Contrary to the static analysis, the interplay between damage and $\Gamma$-convergence turns out to be unstable through the time evolution process. Indeed, as explained in Theorem \ref{thm:DAMAGE} and Theorem \ref{thm:CNS FR}, the effective quasi-static evolution derived in the subsequent sections might not be of perfect plasticity type. Instead, it can be interpreted as one of damage, characterised by means of the material's compliance as internal variable:
\begin{thm}\label{thm:DAMAGE}
Let $\e > 0$ and $(u_\e, \Theta_\e, a_\e)$ be a quasi-static evolution of the homogenized brittle damage model given by Proposition \ref{prop:FG}. There exists a subsequence (not relabeled) and absolutely continuous functions
$$(u,e,p,\sigma, \mu) : [0,T] \to BV((0,L)) \times \R \times \M([0,L]) \times \KK \times \M([0,L])$$
such that for all $t \in [0,T]$
\begin{subequations}
	\begin{empheq}[left=\empheqlbrace]{align}
	\, & u_\e(t) \rightharpoonup u(t)  \quad \text{weakly-* in } BV((0,L)), \nonumber \\
	\, & \mu_\e(t) := \frac{1 - \Theta_\e(t)}{\e} \LL^1 \res (0,L) \rightharpoonup \mu(t)  \quad \text{weakly-* in } \M([0,L]), \nonumber\\
	\, & \sigma_\e(t) \to \sigma(t)  \quad \text{in } \R, \nonumber \\
	\, & e_\e(t) := \frac{\sigma_\e(t)}{a_1} \Theta_\e(t) \rightharpoonup e(t)  \quad \text{weakly in } L^2((0,L)), \nonumber \\
	\, & p_\e(t) := \frac{\sigma_\e(t)}{a_0} \mu_\e(t) \rightharpoonup p(t) \quad \text{weakly-* in } \M([0,L]), \nonumber
	\end{empheq}
\end{subequations}
when $\e \searrow 0$ and satisfying the following assertions for all $t \in [0,T]$:
\begin{enumerate}[label=\roman*., leftmargin=* ,parsep=0.1cm,topsep=0.2cm]
\item Additive Decomposition: \quad $Du(t) = e(t)\LL^1 \res (0,L) + p(t) \res (0,L)$ in $\M((0,L))$
\item Relaxed Dirichlet Condition: \quad $p(t) \res \{ 0,L \} = \big(w(t) - u(t) \big) \big( \delta_L - \delta_0 \big)$ in $\M(\{0,L \})$ 
\item Constitutive Equation: \quad $\sigma(t) = a_1 e(t) $								
\item Equilibrium Equation: \quad $\sigma'(t) = 0$ in $H^{-1}((0,L))$
\item Stress Constraint: \quad $\sigma(t) \in \KK$.
\setcounter{saveenum}{\value{enumi}}
\end{enumerate} 
Furthermore, the effective compliance defined by
$$
c : t \in [0,T] \mapsto \frac{\mu(t)}{a_0} + \frac{1}{a_1} \LL^1 \res (0,L) \in \M([0,L];\R^+)
$$
is non-decreasing in time and satisfies the following assertions:
\begin{enumerate}[label=\roman*., leftmargin=* ,parsep=0.1cm,topsep=0.2cm]
\setcounter{enumi}{\value{saveenum}}
\item Constitutive Equation:\quad $Du(t) = \sigma(t) c(t) \res (0,L)  \text{ in } \M((0,L)) $ for all $t \in [0,T]$
\item Griffith Evolution Law: \quad $\dot c (t) \left( 2 \kappa a_0 - \sigma(t)^2 \right) = 0   \text{ in } \M([0,L];\R^+) $ for $ \LL^1 $-a.e. $ t \in [0,T]$.
\end{enumerate} 
\end{thm}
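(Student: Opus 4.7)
\emph{Overall strategy.} I plan to combine Helly selection in time and weak-$\star$ compactness in $\M([0,L])$ with the static $\Gamma$-convergence of \cite[Theorem 3.1]{BIR} (Appendix \ref{section:appendix}) to pass to the limit pointwise in time in the Francfort--Garroni brittle damage evolution of Proposition \ref{prop:FG}. At fixed $\e>0$, the equilibrium equation forces $\sigma_\e(t)\in\R$ to be spatially constant and the harmonic-mean structure of the one-dimensional $G$-closure yields
$$u_\e'(t)=\sigma_\e(t)\,c_\e(t),\qquad c_\e(t)=\frac{\Theta_\e(t)}{a_1}+\frac{\mu_\e(t)}{a_0},\qquad\mu_\e(t)=\frac{1-\Theta_\e(t)}{\e}\LL^1\res(0,L).$$
Combined with the damage energy balance and the hypothesis $w\in AC([0,T];H^1(\R))$, this identity will provide uniform-in-$\e$ bounds on $\sigma_\e(t)$, on the total mass $|\mu_\e(t)|$, on $\|u_\e(t)\|_{BV}$, and a uniform AC modulus in $t$. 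The irreversibility enforced by Proposition \ref{prop:FG} makes $t\mapsto|\mu_\e(t)|$ non-decreasing, so Helly selection combined with a diagonal extraction will produce a subsequence along which $\mu_\e(t)\rightharpoonup\mu(t)$ weakly-$\star$ and $\sigma_\e(t)\to\sigma(t)$ for every $t\in[0,T]$. The strong convergence $1-\Theta_\e(t)=\e|\mu_\e(t)|\to 0$ in $L^1$ identifies $c(t)=\mu(t)/a_0+(1/a_1)\LL^1\res(0,L)$, and the remaining convergences of $e_\e$, $p_\e$ and $u_\e$ follow.

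\emph{Structural relations.} Taking the weak-$\star$ limit in $u_\e'(t)=\sigma_\e(t)c_\e(t)$ in $\M((0,L))$ immediately yields (i), (iii), (iv) and (vi). The relaxed Dirichlet condition (ii) will follow by extending $u_\e(t)$ by $w(t)$ outside $(0,L)$ and passing to the weak-$\star$ limit in $BV(\R)$, so that the boundary defects between $u(t)$ and $w(t)$ concentrate as Dirac masses at $\{0,L\}$. For the stress constraint (v), at each $t$ the triple $(u_\e(t),\Theta_\e(t),a_\e(t))$ is a global minimizer of the static damage energy among admissible competitors with $\Theta\leq\Theta_\e(t)$; testing this minimality against the recovery sequences of the static $\Gamma$-limit of Appendix \ref{section:appendix} and passing to the limit will force $\sigma(t)\in\KK=[-\sqrt{2\kappa a_0},\sqrt{2\kappa a_0}\,]$.

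\emph{Griffith flow rule.} Monotonicity of $c(\cdot)$ in the sense of measures is inherited from that of $c_\e(\cdot)$, and AC regularity of the limit from the uniform AC estimates. The plan is then to pass to the limit in the $\e$-energy balance: the ``$\geq$'' direction will come from joint lower semicontinuity of the elastic energy $\tfrac12\sigma^2|c|$ and of the cumulated dissipation $\kappa(|\mu(t)|-|\mu(0)|)$, while the reverse inequality will rely on a mutual recovery sequence in the spirit of \cite{MRS,DMDSM} compatible with the irreversibility $\Theta_\e(s)\geq\Theta_\e(t)$ for $s\leq t$. Differentiating the resulting limit balance in $t$, using (vi) and the relaxed Dirichlet condition to rewrite the power of external forces, and exploiting that $\sigma(t)$ is constant in space will lead to the scalar identity
$$\bigl(2\kappa a_0-\sigma(t)^2\bigr)\,|\dot c(t)|([0,L])=0\qquad\text{for }\LL^1\text{-a.e.\ }t\in[0,T].$$
Since $\sigma(t)^2\leq 2\kappa a_0$ by (v) and $\dot c(t)\geq 0$ by monotonicity, the nonnegative measure $\dot c(t)\bigl(2\kappa a_0-\sigma(t)^2\bigr)$ has vanishing total mass, which is exactly (vii).

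\emph{Main obstacle.} The most delicate step will be the derivation of the limit energy balance, and specifically the construction of a mutual recovery sequence respecting the damage irreversibility: the static $\Gamma$-convergence of \cite{BIR} only supplies lower semicontinuity, and the recovery sequences it provides are a priori incompatible with the monotonicity constraint on $\Theta_\e$. This is precisely the point at which the interaction between relaxation and irreversibility may fail to commute, and where, as announced in the introduction, the effective limit evolution may cease to be of perfect plasticity type.
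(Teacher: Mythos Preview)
Your strategy is sound and would lead to the result, but it takes a substantially more abstract route than the paper, and in doing so you introduce difficulties that the paper avoids entirely by exploiting the one-dimensional structure.

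The key point you are missing is that in 1D, with $\sigma_\e(t)$ spatially constant, integrating the identity $u_\e'(t)=\sigma_\e(t)\,a_\e(t)^{-1}$ over $(0,L)$ gives the \emph{explicit} formula
\[
[w(t)]^L_0 = \sigma_\e(t)\Bigl(\tfrac{1}{a_0}\!\int_0^L\!\tfrac{1-\Theta_\e(t)}{\e}\,dx+\tfrac{1}{a_1}\!\int_0^L\!\Theta_\e(t)\,dx\Bigr),
\]
so that once Helly produces $\mu_\e(t)\rightharpoonup\mu(t)$ along a fixed subsequence, $\sigma_\e(t)\to\sigma(t)=[w(t)]^L_0\big/\bigl(l(t)/a_0+L/a_1\bigr)$ follows \emph{directly}, with $l(t)=\mu(t)([0,L])$. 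Similarly $\E_\e(t)=\tfrac12\sigma_\e(t)[w(t)]^L_0+\kappa\,\mu_\e(t)([0,L])$, so the limit energy balance $\E(t)=\E(0)+\int_0^t\sigma[\dot w]^L_0\,ds$ is obtained by \emph{plain convergence of each term}, not by lower semicontinuity plus a mutual recovery sequence. The Griffith law then drops out by differentiating the two scalar identities $\E(t)=\tfrac12\sigma(t)[w(t)]^L_0+\kappa l(t)$ and $[w(t)]^L_0=\sigma(t)\bigl(l(t)/a_0+L/a_1\bigr)$, which yields $\dot l(t)\bigl(\kappa-\sigma(t)^2/(2a_0)\bigr)=0$ without any recovery construction. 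Likewise, the stress constraint $\sigma(t)\in\KK$ is proved in the paper not via $\Gamma$-recovery sequences but by a direct computation on the convex envelope $\C W_\e^t$ (formula \eqref{eq:SQ W_eps(t)}), showing $|\sigma_\e(t)|\leq\sqrt{2\kappa a_0}\,L_\e\to\sqrt{2\kappa a_0}$. Finally, absolute continuity is proved \emph{after} the limit, by manipulating the explicit formulas and applying Gronwall to $|\sigma(t)-\sigma(s)|^2$; no uniform-in-$\e$ AC modulus is ever established or needed.

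In short, the ``main obstacle'' you identify --- building a mutual recovery sequence compatible with irreversibility --- is precisely what the paper \emph{sidesteps} for this theorem by working with closed-form scalar identities. Your abstract approach has the merit of being closer to what one would attempt in higher dimension, but in the present 1D setting it is considerably harder than necessary, and your claimed uniform AC modulus at the $\e$-level is neither proved in the paper nor obviously available. The irreversibility/recovery tension you flag is real, but it is the obstruction behind Theorem~\ref{thm:CNS FR} (why the limit may fail to be perfectly plastic), not behind Theorem~\ref{thm:DAMAGE}.
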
 
\noindent
As mentionned above, according to the choice of the Dirichlet condition, the medium's response to the loading might differ from a perfect plastic behaviour:
\begin{thm}\label{thm:CNS FR}
The quasi-static evolution $(u,e,p) : [0,T] \to BV((0,L)) \times \R \times \M([0,L])$ is a quasi-static evolution of the perfect plasticity model \eqref{eq:def plasticity} and \eqref{eq:EB} if and only if the Dirichlet boundary condition $w \in AC([0,T];H^1(\R))$ satisfies: 
\begin{equation}\label{eq:CNS w}
\text{For all } 0 \leq s < t \leq T, \quad 
\left\lvert \big[ w(t) \big]^L_0 \right\rvert < \left\lvert \big[ w(s) \big]^L_0 \right\rvert \Rightarrow \left\lvert \big[ w(t) \big]^L_0 \right\rvert \leq \sqrt{2\kappa a_0} \frac{L}{a_1}.
\end{equation}
\end{thm}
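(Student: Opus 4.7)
The plan is to leverage Theorem \ref{thm:DAMAGE}, which already provides items (i)--(vii) of the variational formulation \eqref{eq:def plasticity} of a quasi-static perfect plasticity evolution; the only missing ingredient is the Energy Balance \eqref{eq:EB}, so the task reduces to characterising the loadings $w$ for which \eqref{eq:EB} is satisfied by the limit evolution $(u,e,p,\sigma,\mu)$.

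The first step is to pass to the limit $\e\searrow 0$ in the energy balance of the brittle damage evolutions \eqref{eq:brittle evolution} from \cite{FG}. Starting from an undamaged initial configuration (so that $\mu(0)=0$), this should produce the limit balance
$$
\frac{L\sigma(t)^2}{2a_1} + \frac{\sigma(t)^2\mu(t)([0,L])}{2a_0} + \kappa\,\mu(t)([0,L]) = \frac{L\sigma(0)^2}{2a_1} + \int_0^t \sigma(s)\,\tfrac{d}{ds}[w(s)]_0^L\,ds,
$$
the second and third terms representing the limit elastic energy carried by the (asymptotically vanishing) damaged phase and the cumulated damage cost. Integrating the constitutive relation (vi) $Du(t)=\sigma(t)c(t)\res(0,L)$ together with the boundary condition (ii) yields the compatibility identity
$$
[w(t)]_0^L = \sigma(t)\,\Big(\tfrac{\mu(t)([0,L])}{a_0}+\tfrac{L}{a_1}\Big),
$$
and subtracting \eqref{eq:EB} from the limit balance reduces the perfect plasticity condition to the pointwise identity
$$
\frac{\sigma(t)^2\mu(t)([0,L])}{2a_0}+\kappa\,\mu(t)([0,L])=\sqrt{2\kappa a_0}\,\mathcal V(p;0,t).
$$

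For the sufficient direction, the structural claim is that under \eqref{eq:CNS w} and continuity of $t\mapsto [w(t)]_0^L$, exactly one of two alternatives occurs: either $|[w(t)]_0^L|\leq\sqrt{2\kappa a_0}L/a_1$ for every $t\in[0,T]$, in which case Griffith (vii) gives $\mu\equiv 0$ and the evolution is purely elastic (hence trivially perfectly plastic); or there exists a first instant $t^\ast$ with $|[w(t^\ast)]_0^L|=\sqrt{2\kappa a_0}L/a_1$, after which $|[w(\cdot)]_0^L|$ is non-decreasing and $[w(\cdot)]_0^L$ has constant sign. Indeed, any loss of monotonicity after $t^\ast$ would yield, by continuity of $w$, an intermediate instant $t'$ with $|[w(t')]_0^L|$ strictly less than some earlier value and yet strictly greater than $\sqrt{2\kappa a_0}L/a_1$, contradicting \eqref{eq:CNS w}. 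In this monotone regime the compatibility identity forces $|\sigma(t)|=\sqrt{2\kappa a_0}$ with constant sign, and Griffith yields monotonicity of $\mu(\cdot)([0,L])$, so $p(\cdot)$ is itself monotone in time. Consequently $\mathcal V(p;0,t)=|p(t)|([0,L])=\sqrt{2\kappa a_0}\mu(t)([0,L])/a_0$, both sides of the pointwise identity collapse to $2\kappa\mu(t)([0,L])$, and \eqref{eq:EB} holds.

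For the necessary direction, if \eqref{eq:CNS w} fails, select $s<t$ with $|[w(t)]_0^L|<|[w(s)]_0^L|$ and $|[w(t)]_0^L|>\sqrt{2\kappa a_0}L/a_1$; replacing $s$ by the last instant in $[0,t]$ at which $|[w(\cdot)]_0^L|\geq |[w(s)]_0^L|$, one may further assume $|[w(\cdot)]_0^L|$ strictly decreases on $(s,t]$ while staying above the elastic threshold, so that $[w(\cdot)]_0^L$ has constant non-zero sign on $[s,t]$ by continuity. Griffith (vii) together with the compatibility identity then forces $\mu(\cdot)([0,L])$ constant on $[s,t]$, whence $|\sigma(t)|<|\sigma(s)|=\sqrt{2\kappa a_0}$, $p$ is monotone on $[s,t]$, and $\mathcal V(p;s,t)=|\sigma(s)-\sigma(t)|\,\mu(s)([0,L])/a_0$. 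Subtracting the limit balance and the hypothetical \eqref{eq:EB} between $s$ and $t$ yields $\sigma(s)+\sigma(t)=-2\sqrt{2\kappa a_0}$ times the common sign of $\sigma$ on $[s,t]$, incompatible with $\sigma(s)$ and $\sigma(t)$ sharing that same sign. The main obstacle is precisely this structural analysis: extracting from \eqref{eq:CNS w} the monotonicity of $|[w(\cdot)]_0^L|$ and constancy of sign of $[w(\cdot)]_0^L$ in the damage regime, and the corresponding sign bookkeeping that collapses $\mathcal V(p;s,t)$ onto $|p(t)-p(s)|([0,L])$.
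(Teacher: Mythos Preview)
Your overall strategy---reducing the question to the Energy Balance via the pointwise identity obtained by subtracting the limit energy balance from \eqref{eq:EB}---is correct, and your necessary direction is essentially sound (indeed more self-contained than the paper's, which invokes the Flow Rule characterisation from \cite{DMDSM}). The side claims that $|[w]_0^L|$ ``strictly decreases on $(s,t]$'' and that $|\sigma(s)|=\sqrt{2\kappa a_0}$ are both unjustified as stated, but also unnecessary: once you know $|[w(r)]_0^L|<|[w(s)]_0^L|$ for $r\in(s,t]$ (which your replacement of $s$ does give), you get $|\sigma(r)|<\sqrt{2\kappa a_0}$ on $(s,t]$, Griffith freezes $l$ at some $m>0$, and then the subtracted identity has strictly negative left-hand side $(\sigma(t)^2-\sigma(s)^2)m/(2a_0)$ and non-negative right-hand side $\sqrt{2\kappa a_0}\,\mathcal V(p;s,t)$, which is already the contradiction.

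The genuine gap is in the sufficient direction. Your assertion that ``the compatibility identity forces $|\sigma(t)|=\sqrt{2\kappa a_0}$'' in the monotone regime is the heart of the proof, and compatibility alone does \emph{not} force it: for a given $|[w(t)]_0^L|$ above threshold, the pair $(|\sigma(t)|,l(t))$ is underdetermined, and any $|\sigma(t)|<\sqrt{2\kappa a_0}$ with $l(t)$ correspondingly larger is compatible with both the identity and the stress constraint. What is actually needed is a dynamic contradiction argument: if $|\sigma(t)|<\sqrt{2\kappa a_0}$ at some $t>t_0$, take the maximal open interval $(t_1,t_2)\ni t$ on which the strict inequality persists; Griffith freezes $l$ at $l(t_1)>0$ there, continuity of $\sigma$ gives $|\sigma(t_1)|=\sqrt{2\kappa a_0}$, hence $|[w(s)]_0^L|<|[w(t_1)]_0^L|$ for $s\in(t_1,t_2)$; then \eqref{eq:CNS w} forces $|[w(s)]_0^L|\le\sqrt{2\kappa a_0}L/a_1$, and by continuity the same bound holds at $t_1$, contradicting $l(t_1)>0$. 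A variant of this argument is also required for your Case~1 claim that $\mu\equiv 0$ when $|[w]_0^L|$ never exceeds the elastic threshold (Griffith alone does not give this, since $|\sigma|$ may touch $\partial K$), and for the case $\mu(0)>0$, which you have excluded without justification---it occurs precisely when $|[w(0)]_0^L|>\sqrt{2\kappa a_0}L/a_1$, cf.\ \eqref{eq:Constitutive Equation Initial Time}. Your identification of the ``main obstacle'' is therefore off: deducing monotonicity of $|[w]_0^L|$ from \eqref{eq:CNS w} is routine; proving that the stress then saturates is where the work lies.
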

\noindent
The present study may be seen as an illustration of non-stability issues arising when dealing with problems of $H$-convergence in the $L^1(\O)$ framework where ellipticity is lost during the time process, even when working in the simplest evolution setting and in dimension one.

\subsection{Organization of the paper} In Section \ref{section:notation}, we recall some notation and preliminary results. In Section \ref{section:FG}, we introduce the family of quasi-static brittle damage evolutions derived in \cite[Theorem 2]{FG} associated to a linearly elastic material with toughness $\kappa /\e$ and stiffness tensors $\e a_0$ and $a_1$, for $\e >0$, given a prescribed boundary datum and no volume force load in the one-dimensional setting. Particularly, due to the explicit knowledge of the $\G$-closure set of all admissible homogenized composite materials in dimension one, we collect starting information of crucial interest for the subsequent sections. In Section \ref{section:effective evolution}, we derive the effective quasi-static evolution when passing to the limit $\e \searrow 0$. We first give uniform bounds in Proposition \ref{prop:Bounded Energy} and next analyse the behaviour and regularity properties of the effective evolution. Section \ref{section:Energy Balance} adresses the question of the nature of the quasi-static evolution and determines in Theorem \ref{thm:CNS FR} the necessary and sufficient condition ensuring the perfect plastic behaviour of the evolution. Finally, Section \ref{section:ccl} discusses whether the present work could be improved in order to derive a quasi-static evolution of perfect plasticity.

\section{Notation and preliminaries} \label{section:notation}

\noindent \textbf{Matrices.} If $a$ and $b \in \R^N$, with $N \in \N \setminus \{ 0 \}$, we write $a \cdot b = \sum_{i=1}^N a_i b_i$ for the Euclidean scalar product and $\left\lvert a \right\rvert = \sqrt{a \cdot a}$ for the corresponding norm. The space of symmetric $N \times N$ matrices is denoted by $\Ms$ and is endowed with the Frobenius scalar product $\xi : \eta = {\rm tr} (\xi \eta)$ and the corresponding norm $\left\lvert \xi \right\rvert = \sqrt{\xi : \xi}$. 

\medskip

\noindent \textbf{Measures.} The Lebesgue and $k$-dimensional Hausdorff measures in $\R^N$ are respectively denoted by $\LL^N$ and $\HH^k$. If $X$ is a borel subset of $\R^N$ and $Y$ is an Euclidean space, we denote by $\M(X;Y)$ the space of $Y$-valued bounded Radon measures in $X$ which, according to the Riesz Representation Theorem, can be identified to the dual of $C_0(X;Y)$ (the closure of $C_c(X;Y)$ for the sup-norm in $X$). The weak-* topology of $\M(X;Y)$ is defined using this duality. The indication of the space $Y$ is omitted when $Y = \R$. For $\mu \in \M(X;Y)$, its total variation is denoted by $|\mu|$ and we denote by $\mu = \mu^a + \mu^s$ the Radon-Nikod\'ym decomposition of $\mu $ with respect to Lebesgue, where $\mu^a$ is absolutely continuous and $\mu^s$ is singular with respect to the Lebesgue measure $\LL^N$.

\medskip

\noindent \textbf{Functional spaces.}  We use standard notation for Lebesgue and Sobolev spaces. 
If $U$ is a bounded open subset of $\R^N$, we denote by $L^0(U;\R^m)$ the set of all $\LL^N$-measurable functions from $U$ to $\R^m$. We recall some properties regarding functions with values in a Banach space and refer to \cite{FL,Br,DMDSM} for details and proofs on this matter. If $Y$ is a Banach space and $T > 0$, we denote by $AC([0,T];Y)$ the space of absolutely continuous functions $f : [0,T] \to Y$. If $Y$ is the dual of a separable Banach space $X$, then every function $f \in AC([0,T];Y)$ is such that the weak-* limit 
$$\dot f(t) = \text{{\it w*-}}\lim_{s \to t} \frac{f(t) - f(s)}{t-s} \in Y$$
exists for $\LL^1$-a.e. $t \in [0,T]$, $\dot f :[0,T] \to Y$ is weakly-* measurable and $t \mapsto \Vert \dot f(t) \Vert_Y \in L^1([0,T])$. If $f : [0,T] \times \R \to \R$ is a function of two variables, time and spatial derivatives will be respectively denoted by $\dot f$ and $f'$.

\medskip

\noindent\textbf{Functions of bounded variation.} Let $U \subset \R^N$ be an open bounded set. A function $u \in L^1(U;\R^m)$ is a {\it function of bounded variation} in $U$, and we write $u \in BV(U;\R^m)$, if its distributional derivative $Du$ belongs to $\M(U;\mathbb M^{m \times N})$. We use standard notation for that space and refer to \cite{AFP} for details. We just recall that if $U$ has Lipschitz boundary, every function $u \in BV(U;\R^m)$ has an inner trace on $\partial U$ (still denoted by $u$ and $\HH^{N-1}$-integrable on $\partial U$) and there exists a constant $C > 0$ depending only on $U$ such that 
\begin{equation}\label{eq:norme BV}
\frac1C \norme{u}_{BV(U;\R^m)} \leq \lvert Du \rvert (U) + \int_{\partial U} \left\lvert u \right\rvert \, d \HH^{N-1} \leq C \norme{u}_{BV(U;\R^m)}
\end{equation}
according to \cite[Proposition 2.4, Remark 2.5 (ii)]{Temam}.

\medskip

\noindent\textbf{Homogenization and $H$-convergence} We refer to \cite{Allaire} for an exhaustive presentation of these notions and only recall minimal results. We denote, for fixed $\alpha,\beta >0$, the subset of fourth-order symmetric tensors
$$
\F_{\alpha,\beta} = \left\{ A \in \R^{N^4} \, : \, A_{ijkl}=A_{klij}=A_{jikl}, \, \alpha \left\lvert \xi \right\rvert^2 \leq A \xi:\xi \leq \beta \left\lvert \xi \right\rvert^2 \text{ for all } \xi \in \Ms \right\}.
$$
Let $\O$ be a bounded open set of $\R^N$. We say that $A_n \in L^\infty(\O;\F_{\alpha,\beta} )$ $H$-converges to $A \in L^\infty (\O;\F_{\alpha,\beta})$ if, for every $f \in H^{-1}(\O;\R^N)$, the solutions $u_n \in H^1_0(\O;\R^N)$ of the equilibrium equations 
$$\begin{cases}
 - {\rm div} \big( A_n e(u_n) \big) = f \quad \text{in } \O \\
 u_n = 0 \quad \text{on } \partial \O
 \end{cases}
$$
are such that $u_n \rightharpoonup u $ weakly in $ H^1_0(\O;\R^N) $ and $A_n e(u_n) \rightharpoonup A e(u) $ weakly in $ L^2(\O;\Ms)$ as $n \nearrow + \infty$, where $u \in H^1_0(\O;\R^N)$ is the solution of 
$$ \begin{cases}
- {\rm div} \big( A e(u) \big) = f \quad \text{in } \O \\
u = 0 \quad \text{on } \partial \O.
\end{cases}
$$
Given a volume fraction $\theta \in L^\infty(\O;[0,1])$ and $A,B \in L^\infty(\O;\F_{\alpha,\beta})$ with $A \leq B$ as quadratic forms on $\Ms$, the $\G$-closure set 
$$\G_\theta (A,B)$$
is defined as the set of all possible $H$-limits of $\chi_n A + (1 - \chi_n)B$ where $\chi_n \in L^\infty(\O;\{ 0,1 \})$ weakly-* converges to $\theta$ in $L^\infty(\O;[0,1])$.

\medskip

\noindent \textbf{Convex analysis.} We recall some definition and standard results from convex analysis (see \cite{Rockafellar}).
Let $f : \R^N \to [0,+ \infty]$ be a proper function ({\it i.e.} not identically $+ \infty$). The convex conjugate of $f$ is defined as 
$$f^* (x) = \sup_{y \in \R^N} \, \left\{ x \cdot y - f(y) \right\} $$
which turns out to be convex and lower semicontinuous. If $f$ is convex and finite, we define its recession function as 
$$f^\infty(x) = \lim_{t \nearrow + \infty} \, \frac{f(tx)}{t} \in [0, + \infty] $$
which is convex and positively $1$-homogeneous. If $f,g : \R^N \to [0,+ \infty]$ are proper convex functions, then their infimal convolution is defined as 
$$f \infconv g (x) = \inf_{y \in \R^N} \, \left\{ f(x-y) + g(y) \right\} $$
which is convex as well. The indicator function of a set $C \subset \R^N$ is defined as $I_C = 0$ in $ C$ and $ + \infty$ otherwise. The convex conjugate $I_C^*$ of $I_C$ is called the support function of $C$.

\section{Francfort-Garroni's model of Quasi-Static Brittle Damage} \label{section:FG}
For all $\e >0$, we consider a linearly elastic material whose reference configuration is $\O = (0,L)$, with toughness $\kappa /\e$ and stiffness tensors $\e a_0$ and $a_1$, corresponding to its damaged and sound zones respectively. Applying Theorem 2 and Remark 5 of \cite{FG} to this linearly elastic material without volume force load and with a prescribed boundary condition $w \in AC([0,T];H^1(\R))$, it ensures the following existence result for a relaxed quasi-static damage evolution.

\begin{prop}\label{prop:FG}
For all $\e >0$, there exist a time-dependent density, a displacement and a stiffness tensor 
\begin{equation}\label{eq:evolution FG eps}
 \vspace{0.1cm}
 \left\{ 
\begin{array}{l}
\vspace{0.2cm}
 \Theta_\e  : [0,T] \to L^\infty((0,L); [0,1])  \\ 
\vspace{0.2cm}
\ds  u_\e : [0,T] \to H^1((0,L))  \\ 
\ds a_\e  = \left( \frac{1-\Theta_\e}{\e a_0} + \frac{\Theta_\e}{a_1} \right)^{-1} :  [0,T] \to L^\infty((0,L))
\end{array} \right.
 \vspace{0.1cm}
\end{equation}
all weakly-* measurable, such that

\medskip
\noindent
\textbf{Dirichlet Boundary Condition:}  $  u_\e(t) \in w(t) + H^1_0((0,L)) $ for all $ t \in [0,T]$;

\medskip
\noindent
\textbf{Initial Minimality:} for all $ v \in w(0) + H^1_0((0,L))$ and $ \theta \in L^\infty((0,L); [0,1])$,
\begin{equation}\label{eq:InitialMinimality eps}
\int_0^L \left( \frac12  a_\e(0) |u_\e'(0)|^2 + \frac{\kappa}{\e} \big( 1 - \Theta_\e(0) \big) \right) \, dx 
 \leq  \int_0^L \left( \frac12 \frac{\e a_0 a_1}{\theta a_1 + (1-\theta) \e a_0} |v'|^2 + \frac{\kappa}{\e}  \theta \right) \, dx;
\end{equation}

\medskip
\noindent
\textbf{Monotonicity:} for all $T \geq t \geq s \geq 0$, $  a_\e(t)  \leq a_\e(s) $ and $ \Theta_\e(t) \leq \Theta_\e(s)$  $ \LL^1 $-a.e. in $ (0,L) $;

\medskip
\noindent
\textbf{One-sided Minimality:} for all $t \in [0,T]$, $ v \in w(t) + H^1_0((0,L))$ and $ \theta \in L^\infty((0,L);[0,1])$,
\begin{equation}\label{eq:OneSided Minimality eps}
\frac12 \int_0^L a_\e(t) \left\rvert u'_\e(t) \right\rvert ^2 \, dx \leq \frac12 \int_0^L \frac{\e a_0 a_\e(t)}{\theta a_\e(t) + (1-\theta) \e a_0} |v'|^2 \, dx + \frac{\kappa}{\e} \int_0^L \theta  \Theta_\e(t) \, dx;
\end{equation}

\medskip
\noindent
\textbf{Energy Balance: } for all $ t \in [0,T]$, the total energy
$$
\mathcal{E}_\e(t) := \frac12 \int_0^L a_\e(t) \left\rvert u'_\e(t) \right\rvert ^2 \, dx + \frac{\kappa}{\e} \int_0^L (1-\Theta_\e(t)) \, dx
$$
satisifes
\begin{equation}\label{eq:EnergyBalance eps}
\mathcal{E}_\e(t) = \mathcal{E}_\e(0) + \int_0^t \int_0^L a_\e(s)  u'_\e(s) \dot{w}'(s) \, dx ds.
\end{equation}
\end{prop}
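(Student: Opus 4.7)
The proof consists in specializing \cite[Theorem 2]{FG} and \cite[Remark 5]{FG} to the present one-dimensional setting with weak phase of stiffness $\e a_0$, sound phase of stiffness $a_1$, toughness $\kappa/\e$, Dirichlet boundary datum $w \in AC([0,T];H^1(\R))$ and no volume force. The one-dimensional character drastically simplifies the conclusion of \cite{FG}: for any $\theta \in [0,1]$ the $\G$-closure set $\G_\theta(\e a_0,a_1)$ reduces to the single element $\big((1-\theta)/(\e a_0) + \theta/a_1\big)^{-1}$ (the harmonic mean, which is the unique attainable effective stiffness in dimension one, achieved by simple layerings). Consequently, the internal variable describing the damaged state at each time is entirely encoded by the volume fraction $\Theta_\e(t)$ of sound material, and the effective stiffness $a_\e(t)$ is given by the explicit formula appearing in \eqref{eq:evolution FG eps}.

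\textbf{Construction by time discretization.} I would fix $\e>0$ and a time step $\tau>0$ with nodes $t^i_\tau = i\tau$, set $\Theta^{-1}_{\e,\tau}\equiv 1$ and construct inductively a pair $(u^i_{\e,\tau},\Theta^i_{\e,\tau})$ as a minimizer of the relaxed energy
\begin{equation*}
(v,\theta) \mapsto \int_0^L \left( \frac12 \left(\frac{1-\theta}{\e a_0}+\frac{\theta}{a_1}\right)^{-1} |v'|^2 + \frac{\kappa}{\e}(1-\theta) \right) dx
\end{equation*}
over $v \in w(t^i_\tau)+H^1_0((0,L))$ and $\theta \in L^\infty((0,L);[0,1])$ subject to the irreversibility constraint $\theta \leq \Theta^{i-1}_{\e,\tau}$. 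Existence at each step follows from the direct method: the change of variable $\sigma := a(\theta)v'$ renders the energy jointly convex and weakly lower semicontinuous, while coercivity is ensured by $\e a_0 > 0$. The piecewise-constant-in-time interpolations automatically satisfy the discrete analogues of the initial and one-sided minimalities, the monotonicity of $\Theta^\tau_\e$ in time, and a discrete energy inequality.

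\textbf{Passage to the limit and energy balance.} Since $t \mapsto \Theta^\tau_\e(t)$ is non-increasing with values in $[0,1]$, Helly's selection theorem yields a pointwise (in time, a.e.\ in space) subsequential limit $\Theta_\e(t)$. The corresponding displacements $u^\tau_\e(t)$, uniquely determined up to an additive constant by equilibrium with stiffness $a(\Theta^\tau_\e(t))$ and Dirichlet datum $w(t)$, converge weakly in $H^1((0,L))$, and the one-sided minimality \eqref{eq:OneSided Minimality eps} passes to the limit by standard lower semicontinuity. The main obstacle, already addressed in \cite{FG}, is the energy balance \eqref{eq:EnergyBalance eps}: the upper bound is obtained from the discrete estimate combined with the absolute continuity of $w$, while the lower bound follows by testing one-sided minimality against the competitor $v = u_\e(s) + \big(w(t)-w(s)\big)$ to derive the pointwise differential inequality $\frac{d}{dt}\mathcal{E}_\e(t) \geq \int_0^L a_\e(t)\, u'_\e(t)\, \dot w'(t)\, dx$ for $\LL^1$-a.e.\ $t \in [0,T]$, and then integrating. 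Measurability in time of $s \mapsto \int_0^L a_\e(s) u'_\e(s) \dot w'(s)\, dx$, which is required to make sense of both sides, is guaranteed by choosing a weakly-* measurable representative of $\Theta_\e$ (for instance its left-continuous version), after which \cite[Remark 5]{FG} ensures the full statement.
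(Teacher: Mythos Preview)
Your proposal is correct and follows essentially the same route as the paper: both invoke \cite[Theorem 2, Remark 5]{FG} and the fact that in one dimension the $\G$-closure $\G_\theta(\e a_0,a_1)$ collapses to the single harmonic mean, which the paper cites as \cite[Lemma 1.3.32, Formula (1.109)]{Allaire}. The only difference is one of presentation: you sketch the time-discretization machinery internal to \cite{FG}, whereas the paper simply cites that reference and then spends its effort on the weak-* measurability statements for $\Theta_\e$, $a_\e$ and $u_\e$, deriving them from the time-monotonicity of $a_\e$ and $\Theta_\e$ (splitting test functions into positive and negative parts so that the pairings become monotone in $t$).
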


\begin{proof}
This is the direct application of \cite[Theorem 2, Remark 5]{FG} together with \cite[Lemma 1.3.32, Formula (1.109)]{Allaire} which stipulates that for all $0 < a < b$ and all $\theta \in L^\infty((0,L);[0,1])$, 
$$
\mathcal{G}_{\theta}(a;b) = \left\{ \frac{ab}{\theta b + (1- \theta) a} \right\}.
$$
In particular, one gets that $u_\e : [0,T] \to H^1((0,L))$ is strongly measurable (in the sense of \cite[Definition 2.101]{FL}) as it is continuous outside of an at most countable subset of $[0,T]$. One also gets that $a_\e : [0,T] \to L^2((0,L))$ is strongly measurable (which is equivalent to the weak-* measurability according to Pettis' Theorem, see \cite[Theorem 2.104]{FL}), since for all $\phi \in L^2((0,L))$ 
$$t \in [0,T] \mapsto \int_0^L \phi a_\e(t)  \, dx = \int_0^L \max{(0,\phi)} a_\e(t) \, dx - \int_0^L \max{(0,-\phi)} a_\e(t)  \, dx$$
is $\LL^1$-measurable, as it is the difference between two non-increasing functions. For similar reasons, one also infers that $\Theta_\e : [0,T] \to L^2((0,L);[0,1])$ is strongly measurable. Note that $a_\e : [0,T] \to L^\infty((0,L))$ and $\Theta_\e : [0,T] \to L^\infty((0,L);[0,1])$ are (a priori) only weakly-* measurable. To see this, it suffices to take $\phi \in L^1((0,L))$ instead of $L^2((0,L))$ above.
\end{proof}

For all $\e >0$, a naive first use of the One-sided Minimality \eqref{eq:OneSided Minimality eps} entails the following properties of the stress 
$$
\sigma_\e = a_\e u'_\e : [0,T] \to L^2((0,L)).
$$
\begin{prop}
For all $\e >0$ and all $t \in [0,T]$, the stress $\sigma_\e(t) $ is homogeneous in space and $\sigma_\e  \in L^0 \big( [0,T] ; \R \big) $. Moreover,
\begin{equation}\label{eq:u'_eps}
u'_\e = \sigma_\e \left( \frac{1 - \Theta_\e}{\e a_0} + \frac{\Theta_\e}{a_1} \right) .
\end{equation}
\end{prop}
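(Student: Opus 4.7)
The cornerstone is to identify a test in the One-sided Minimality \eqref{eq:OneSided Minimality eps} that both switches off the dissipative term and collapses the harmonic-mean test stiffness back to $a_\e(t)$. Since $\theta \in L^\infty((0,L);[0,1])$ is admissible, I would take $\theta \equiv 0$ (heuristically: "no further damage is allowed, only $v$ is varied"). Then the test stiffness simplifies to
\[
\frac{\e a_0\, a_\e(t)}{0 \cdot a_\e(t) + (1-0)\, \e a_0} = a_\e(t),
\]
and the dissipation term $\tfrac{\kappa}{\e}\int_0^L \theta\, \Theta_\e(t)\,dx$ vanishes identically. Hence the One-sided Minimality reduces to the pure elastic inequality
\[
\tfrac12 \int_0^L a_\e(t)\, |u_\e'(t)|^2\, dx \,\leq\, \tfrac12 \int_0^L a_\e(t)\, |v'|^2\, dx, \quad \text{for all } v \in w(t) + H^1_0((0,L)).
\]
Since $\e a_0 \leq a_\e(t,x) \leq a_1$ $\LL^1$-a.e., the functional on the right is strictly convex in $v'$, and writing the associated Euler–Lagrange equation yields $\bigl(a_\e(t)\, u_\e'(t)\bigr)' = 0$ in $H^{-1}((0,L))$. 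Hence $\sigma_\e(t) := a_\e(t)\, u_\e'(t) \in L^2((0,L))$ has vanishing distributional derivative on the interval, so it is $\LL^1$-a.e.\ equal to a constant, which I identify with its value in $\R$.

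The pointwise identity \eqref{eq:u'_eps} then follows directly from the definition $\sigma_\e = a_\e\, u_\e'$ and the explicit form $a_\e^{-1} = (1-\Theta_\e)/(\e a_0) + \Theta_\e/a_1$ given in \eqref{eq:evolution FG eps}:
\[
u_\e'(t,x) = \sigma_\e(t) \left( \frac{1-\Theta_\e(t,x)}{\e a_0} + \frac{\Theta_\e(t,x)}{a_1} \right) \quad \text{for $\LL^1$-a.e.\ } x \in (0,L).
\]

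For the measurability assertion $\sigma_\e \in L^0([0,T]; \R)$, I would integrate the above identity on $(0,L)$ and use the Dirichlet condition $u_\e(t) \in w(t) + H^1_0((0,L))$, together with the Sobolev embedding $H^1((0,L)) \hookrightarrow C^0([0,L])$, to isolate
\[
\sigma_\e(t) \,=\, \frac{[w(t)]_0^L}{\displaystyle \int_0^L \left( \frac{1-\Theta_\e(t,x)}{\e a_0} + \frac{\Theta_\e(t,x)}{a_1} \right) dx}.
\]
The numerator is continuous in $t$ because $w \in AC([0,T]; H^1(\R))$ and $H^1(\R) \hookrightarrow C^0(\R)$; the denominator is $\LL^1$-measurable in $t$ by the strong measurability of $\Theta_\e : [0,T] \to L^1((0,L))$ already established in the proof of Proposition \ref{prop:FG}, and uniformly bounded below by $L/a_1 > 0$. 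Therefore $\sigma_\e : [0,T] \to \R$ is $\LL^1$-measurable.

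The only genuine subtlety here is the choice of the test couple in the One-sided Minimality: the "non-standard" part is recognizing that $\theta \equiv 0$ is precisely the test that kills both the entropy term and the harmonic interpolation, thereby reducing a coupled one-sided variational principle into a classical linear elasticity minimization on the current stiffness. Everything else is routine: Euler–Lagrange, the structure lemma for distributions with vanishing derivative on an interval, and composition of measurable maps with the affine expression for $1/a_\e$.
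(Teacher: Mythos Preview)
Your proposal is correct and follows essentially the same approach as the paper: both take $\theta \equiv 0$ in the One-sided Minimality to reduce to a pure elastic minimization, then extract the equilibrium equation $(a_\e(t)u_\e'(t))'=0$ (the paper does this via the perturbation $v^\pm = u_\e(t)\pm\delta v$, which is just the first-variation derivation of the Euler--Lagrange equation you invoke). Your treatment of the measurability claim via the explicit formula $\sigma_\e(t) = [w(t)]_0^L / \int_0^L a_\e(t)^{-1}\,dx$ is in fact more detailed than the paper's, which leaves that point implicit.
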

\begin{proof} 
Indeed, for all $v \in H^1_0((0,L))$ and $\delta >0$, applying \eqref{eq:OneSided Minimality eps} with $\theta = 0$ and $v^\pm := u_\e(t) \pm \delta v \in w(t) + H^1_0((0,L))$ ensures that
$$
 0 \leq \int_0^L \sigma_\e(t)u'_\e(t) \, dx \leq \int_0^L \sigma_\e(t)u'_\e(t) \, dx   \pm 2 \delta \int_0^L \sigma_\e(t) v' \, dx + \delta^2 \int_0^L a_\e(t) |v'|^2 \, dx.
$$
Dividing by $\delta>0$ then letting $\delta \searrow 0$ entails that $ \int_0^L \sigma_\e(t) v' \, dx = 0$, which implies the space homogeneity of $\sigma_\e$. Formula \eqref{eq:u'_eps} is a consequence of the expression of $a_\e$ and the definition of $\sigma_\e$.
\end{proof}

For all $\e >0$ and $t \in [0,T]$, we define the function
$$
W^t_\e : (x,\xi) \in (0,L) \times \R \mapsto \min \, \left( \frac{\kappa \Theta_\e(t)(x)}{\e} + \frac12 \e a_0 |\xi|^2 ; \frac12 a_\e(t)(x) |\xi|^2 \right).
$$
A second application of the One-Sided Minimality \eqref{eq:OneSided Minimality eps} implies that for $\LL^1$-a.e. $x \in (0,L)$
$$
\frac12 a_\e(t)(x) |u'_\e(t)(x)|^2 = \C \big(W^t_\e(x, \cdot)\big) \big(u'_\e(t)(x)\big) =: \C W^t_\e\big(x,u'_\e(t)(x)\big)
$$
where $\C W^t_\e \big( x, \cdot \big)$ is the convex envelope of $W^t_\e(x, \cdot)$.
\begin{prop}
For $ \LL^1 $-a.e. in $\{ x \in (0,L), \, a_\e(t)(x) > \e a_0 \}$,
\begin{equation}\label{eq:SQ W_eps(t)}
 \frac12 a_\e(t) |u'_\e(t)|^2 = \left\{ \begin{array}{ll}
								\vspace{0.2cm}  \ds \frac12 \sigma_\e(t) u'_\e(t) &  \ds \text{if } \frac{|\sigma_\e(t)|}{\sqrt{2 \kappa a_0}} \leq L_\e, \\ 
								\vspace{0.2cm} \ds |\sigma_\e(t)| \sqrt{\frac{2 \kappa a_0 \Theta_\e(t)}{a_\e(t) ( a_\e(t) - \e a_0 )}} - \frac{ \kappa a_0 \Theta_\e(t)}{ a_\e(t) - \e a_0}  & \ds \text{if } L_\e <  \frac{|\sigma_\e(t)|}{\sqrt{2 \kappa a_0}} \leq \frac{a_\e(t)}{\e a_0} L_\e, \\
								  \ds \frac{\kappa \Theta_\e(t)}{\e} + \frac12 \e a_0 |u'_\e(t)|^2 & \ds \text{if } \frac{a_\e(t)}{\e a_0} L_\e < \frac{|\sigma_\e(t)|}{\sqrt{2 \kappa a_0}} ,
								\end{array} \vspace{0.2cm} \right. 
\end{equation}
with 
\begin{equation}\label{eq:L_e}
L_\e := \sqrt{ \frac{a_1}{a_1 - \e a_0} } > 1.
\end{equation}
\end{prop}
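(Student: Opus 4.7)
The claim builds on the equality $\tfrac{1}{2} a_\e(t)|u'_\e(t)|^2 = \C W^t_\e(x, u'_\e(t)(x))$ stated just above as a consequence of the second application of the One-Sided Minimality, so the proof reduces to making the convex envelope $\C W^t_\e(x,\cdot)$ explicit on $\{a_\e(t) > \e a_0\}$ and translating its three pieces into conditions on $|\sigma_\e|/\sqrt{2\kappa a_0}$.

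The convex envelope is a standard common-tangent computation. The function $\xi \mapsto W^t_\e(x,\xi)$ is the pointwise minimum of two even convex quadratics, $P_1(\xi) = \tfrac{1}{2} a_\e(t) \xi^2$ and $P_2(\xi) = \tfrac{1}{2}\e a_0 \xi^2 + \kappa \Theta_\e(t)/\e$, of strictly different curvatures on $\{a_\e(t) > \e a_0\}$. Its convex envelope therefore coincides with $P_1$ on a central interval $[-\xi_1, \xi_1]$, with $P_2$ outside a larger interval $[-\xi_2, \xi_2]$, and is affine along the common tangent in between. Matching tangent slopes ($a_\e\xi_1 = \e a_0 \xi_2$) and tangent values yields
\begin{equation*}
\xi_1 = \sqrt{\tfrac{2\kappa\Theta_\e a_0}{a_\e(a_\e - \e a_0)}}, \quad \xi_2 = \tfrac{a_\e}{\e a_0}\xi_1, \quad s := a_\e\xi_1 = \sqrt{\tfrac{2\kappa\Theta_\e a_\e a_0}{a_\e - \e a_0}},
\end{equation*}
together with middle-branch constant $\tfrac{1}{2}a_\e \xi_1^2 = \kappa\Theta_\e a_0/(a_\e - \e a_0)$.

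Next I would translate the three regimes $|u'_\e| \leq \xi_1$, $\xi_1 \leq |u'_\e| \leq \xi_2$, $|u'_\e| \geq \xi_2$ into the stated thresholds on $|\sigma_\e|/\sqrt{2\kappa a_0}$, using $\sigma_\e = a_\e u'_\e$ (spatially constant) and the key algebraic identity
\begin{equation*}
\tfrac{\Theta_\e a_\e}{a_\e - \e a_0} = \tfrac{a_1}{a_1 - \e a_0} = L_\e^2,
\end{equation*}
which follows directly from the explicit expression $a_\e = \e a_0 a_1/\big((1-\Theta_\e) a_1 + \Theta_\e \e a_0\big)$. This reduces $s$ to $\sqrt{2\kappa a_0}\,L_\e$, so that $|\sigma_\e| = a_\e|u'_\e| \leq s$ becomes exactly $|\sigma_\e|/\sqrt{2\kappa a_0} \leq L_\e$, and similarly $|\sigma_\e| \leq a_\e \xi_2 = (a_\e/\e a_0)s$ becomes $|\sigma_\e|/\sqrt{2\kappa a_0} \leq (a_\e/\e a_0) L_\e$. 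Substituting $|\sigma_\e| = a_\e|u'_\e|$ into the affine middle branch $s|u'_\e| - \tfrac{1}{2}a_\e \xi_1^2$ produces precisely $|\sigma_\e|\sqrt{2\kappa\Theta_\e a_0/(a_\e(a_\e-\e a_0))} - \kappa\Theta_\e a_0/(a_\e - \e a_0)$, matching Case 2 of the proposition.

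The whole proof is essentially algebraic and I do not anticipate any serious obstacle; the only bookkeeping worth underlining is the identity $\Theta_\e a_\e/(a_\e - \e a_0) = L_\e^2$, which is precisely what makes the three thresholds on $|\sigma_\e|$ independent of the spatially varying pair $(\Theta_\e, a_\e)$ and expressible through $L_\e$ and the ratio $a_\e/\e a_0$ alone.
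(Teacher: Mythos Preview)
Your argument is correct and follows essentially the paper's approach: the paper invokes the appendix Lemma (the convex envelope of $\min\{K+a\xi^2,\,b\xi^2\}$) while you carry out the common-tangent computation directly, and you correctly isolate the identity $\Theta_\e a_\e/(a_\e-\e a_0)=a_1/(a_1-\e a_0)=L_\e^2$, which is exactly what the paper means when it remarks that $L_\e$ depends neither on $x$ nor on $t$. The one organizational difference is that the paper's proof also \emph{derives} the pointwise equality $\tfrac12 a_\e(t)|u'_\e(t)|^2=\C W^t_\e(x,u'_\e(t))$ inside the proof of the proposition (via the homogenization formula of Allaire--Lods, Aumann's measurable selection, and the One-Sided Minimality), whereas you take it as already established from the sentence preceding the statement; that sentence is in fact a preview, so strictly speaking your write-up should include that short step as well.
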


\begin{proof}
Let $\e >0$ and $t \in [0,T]$. As we are working in the scalar setting, symetric quasiconvex and convex envelopes coincide. According to \cite[Lemma 3.1]{AL}, we have that for all $\xi \in \R$ and $x \in (0,L)$
\begin{multline*}
 \C  W^t_\e(x,\xi) := \inf \left\{ \int_0^1 W^t_\e(x, \xi + \phi'(y)) \, dy \, : \, \phi \in H^1_0((0,1)) \right\} \\
= \underset{\theta \in [0,1]}{\min} \left\{ \frac{\kappa \Theta_\e(t)(x)}{\e} \theta + \frac12 \left( \frac{1 - \theta}{a_\e(t)(x)} + \frac{\theta}{\e a_0} \right)^{-1} |\xi|^2 \right\} .
\end{multline*}
The above minimization being over a strictly convex function, the minimum is indeed reached at a unique minimizer in $[0,1]$. Since 
$$g : (x,\theta) \in (0,L) \times [0,1] \mapsto \frac{\kappa \Theta_\e(t)(x)}{\e} \theta + \frac12  \frac{a_\e(t)(x) \e a_0}{\theta a_\e(t)(x) + (1-\theta)\e a_0} |u'_\e(t)(x)|^2 $$
is a Carath\'eodory function, Aumann's measurable selection creterion (see \cite[Theorem 1.2]{ET}) entails that the (unique) minimizer $\theta_\e(t)(x) \in [0,1]$ of $g(x, \cdot)$ is actually $\LL^1$-measurable in $(0,L)$, {\it i.e.} $ \theta_\e(t) \in  L^\infty((0,L);[0,1])$ and for $ \LL^1$-a.e. $x \in (0,L)$
$$
 \C W^t_\e(x,u'_\e(t)(x))  = \frac{\kappa \Theta_\e(t)(x)}{\e} \theta_\e(t)(x)  + \frac12 \left( \frac{1 - \theta_\e(t)(x)}{a_\e(t)(x)} + \frac{\theta_\e(t)(x)}{\e a_0} \right)^{-1} |u'_\e(t)(x)|^2.$$
Therefore, \eqref{eq:OneSided Minimality eps} implies that
\begin{multline*}
 \frac12 \int_0^L a_\e(t) |u'_\e(t)|^2 \, dx = \underset{\theta \in L^\infty((0,L);[0,1])}{\min } \int_0^L g(x,\theta(x)) \, dx \\ \leq \int_0^L g(x,\theta_\e(t)(x)) \, dx = \int_0^L \C W^t_\e(t)(x,u'_\e(t)(x)) \, dx.
\end{multline*}
Since one simultaneously has $ \C W^t_\e(t)(x,u'_\e(t)(x)) \leq \frac12 a_\e(t)(x) |u'_\e(t)(x)|^2$ for all $x \in (0,L)$, we infer that
$$
\C W^t_\e(t)(x,u'_\e(t)(x)) = \frac12 a_\e(t)(x) |u'_\e(t)(x)|^2 \quad \text{for } \LL^1 \text{-a.e. } x \in (0,L).
$$
Therefore, \eqref{eq:SQ W_eps(t)} is the consequence of Lemma \ref{lem:Convex Envelope} (see Appendix \ref{section:appendix}) applied to $W^t_\e(x, \cdot)$ at every point $x$ in the set $\{ a_\e(t) >  \e a_0 \}$. Note that the constant $ L_\e = \sqrt{\frac{a_1}{a_1-\e a_0}} > 1$ only depends on $\e>0$ and not on $x \in (0,L)$ nor $t \in [0,T]$.
\end{proof}

\section{The limit quasi-static evolution} \label{section:effective evolution}
As previously explained, the objective of this work is to derive an effective limit model by letting $\e$ tend to $0$. Since we expect a limit model of perfect plasticity type, one has to identify which quantities will play the role of the elastic and plastic strains at the scale $\e >0$. Meanwhile, in order to pass to the limit along converging subsequences in the brittle damage evolutions described in Proposition \ref{prop:FG}, we rely on uniform bounds computed in Proposition \ref{prop:Bounded Energy} below.

\subsection{Uniform bounds and compactness}

\begin{prop}\label{prop:Bounded Energy}
There exists a constant $C(w,T) \in (0,+ \infty)$ such that
\begin{equation}\label{eq:UnifBound E esp t}
\underset{t \in [0,T]}{\sup} \, \underset{\e >0}{\sup} \, \E_\e(t)  \leq C 
\end{equation}
and
\begin{equation}\label{eq:UnifBound Theta u sigma}
 \underset{ t \in [0,T]}{\sup} \,  \underset{\e >0}{\sup} \, \left\{ \frac{1}{\e} \norme{\Theta_\e(t) - 1}_{L^1((0,L))} \, + \,  \norme{u_\e(t)}_{BV((0,L))} \, + \,   |\sigma_\e(t)|  \right\} \leq C .
\end{equation}
\end{prop}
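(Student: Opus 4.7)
The plan is to use the Energy Balance \eqref{eq:EnergyBalance eps} as the driver, via a Gronwall-type argument on $\sqrt{\E_\e}$, and then extract every other bound algebraically using the spatial homogeneity of $\sigma_\e(t)$ together with the Dirichlet boundary data. The key geometric fact I would rely on throughout is that $a_\e(t,x) \in [\e a_0, a_1]$ pointwise, which comes directly from the harmonic-mean formula in \eqref{eq:evolution FG eps}: the expression $(1-\Theta_\e)/(\e a_0) + \Theta_\e/a_1$ is a convex combination of two positive numbers and hence lies between them, so its reciprocal lies between $\e a_0$ and $a_1$.

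First, I would bound $\E_\e(0)$ by testing the Initial Minimality \eqref{eq:InitialMinimality eps} with the undamaged competitor $(\theta,v) = (0, w(0))$; this yields $\E_\e(0) \leq \tfrac{a_1}{2} \norme{w'(0)}_{L^2((0,L))}^2$, a bound independent of $\e$. Next, by Cauchy-Schwarz on the work integral in \eqref{eq:EnergyBalance eps} and $a_\e \leq a_1$,
$$
\int_0^L a_\e(s) u'_\e(s) \dot w'(s) \, dx \leq \sqrt{2 \E_\e(s)} \cdot \sqrt{a_1} \, \norme{\dot w'(s)}_{L^2(\R)}.
$$
Since the right-hand side of \eqref{eq:EnergyBalance eps} is absolutely continuous in $t$, so is $\E_\e$; setting $F_\e := \sqrt{\E_\e}$ and differentiating gives $\dot F_\e(s) \leq \sqrt{a_1/2} \, \norme{\dot w'(s)}_{L^2(\R)}$, which integrates directly against $\norme{\dot w'}_{L^1([0,T];L^2(\R))}$. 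This quantity is finite by $w \in AC([0,T];H^1(\R))$, yielding \eqref{eq:UnifBound E esp t}.

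The bound on $(1-\Theta_\e(t))/\e$ in $L^1((0,L))$ now comes for free by reading off the dissipated part of $\E_\e(t)$. For $\sigma_\e(t)$, I would combine its spatial constancy with $u'_\e = \sigma_\e/a_\e$ and integrate on $(0,L)$ against the Dirichlet jump:
$$
\sigma_\e(t) \int_0^L \frac{dx}{a_\e(t)} = \int_0^L u'_\e(t) \, dx = u_\e(t)(L) - u_\e(t)(0) = w(t)(L) - w(t)(0).
$$
Using $a_\e(t) \leq a_1$ the denominator integral is at least $L/a_1$, and the numerator is uniformly bounded in $t$ via the embedding $AC([0,T];H^1(\R)) \hookrightarrow C([0,T]; L^\infty(\R))$; this controls $|\sigma_\e(t)|$.

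Finally, since $u'_\e(t) = \sigma_\e(t)/a_\e(t)$ has constant sign in $x$,
$$
|Du_\e(t)|\big((0,L)\big) = \int_0^L |u'_\e(t)| \, dx = \bigl|w(t)(L) - w(t)(0)\bigr|,
$$
giving a uniform bound on the total variation; combining with a Poincar\'e estimate on $u_\e(t) - w(t) \in H^1_0((0,L))$ and the $L^\infty$-bound on $w(t)$ controls $\norme{u_\e(t)}_{L^1}$ and hence $\norme{u_\e(t)}_{BV}$ through \eqref{eq:norme BV}. The only non-routine step is the $\sqrt{\E_\e}$ substitution turning the quadratic inequality into a linear Gronwall one; the remaining estimates reduce to inspecting $\E_\e$ term by term and exploiting the one-dimensional algebraic identities forced by the spatial constancy of the stress.
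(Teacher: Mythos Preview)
Your proof is correct, but it takes a genuinely different route from the paper's.

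For the energy bound \eqref{eq:UnifBound E esp t}, the paper avoids your Gronwall step entirely: it invokes the One-sided Minimality \eqref{eq:OneSided Minimality eps} with $(\theta,v)=(0,w(t))$ to bound the elastic part $\tfrac12\int a_\e(t)|u'_\e(t)|^2\,dx$ directly by $\tfrac{a_1}{2}\|w\|_{L^\infty([0,T];H^1(\R))}^2$ at every time, and then the Energy Balance combined with Cauchy--Schwarz gives $\E_\e(t)$ bounded outright. Your route uses only the Energy Balance and a Bihari-type inequality on $\sqrt{\E_\e}$; it is self-contained (no minimality needed) but you should mention the standard regularization $\sqrt{\E_\e+\delta}$ to make the differentiation rigorous where $\E_\e$ may vanish.

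For $|\sigma_\e(t)|$ and the $BV$ bound, the paper again proceeds differently: it reads $|\sigma_\e(t)|^2 \le \tfrac{2a_1}{L}\E_\e(t)$ off the elastic energy, and for $\|u'_\e(t)\|_{L^1}$ it appeals to the linear-growth-from-below estimate $\C W_\e(\xi)\ge c|\xi|-1/c$ from \cite[Lemma 2.3]{BIR}. Your arguments---integrating $u'_\e=\sigma_\e/a_\e$ against the boundary jump, and observing that $u'_\e(t)$ has constant sign so $\int_0^L|u'_\e(t)|\,dx=\bigl|[w(t)]_0^L\bigr|$---are cleaner and sidestep the external reference, but they lean hard on the one-dimensional spatial constancy of the stress. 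The paper's estimates, by contrast, are the ones that would survive in higher dimension.
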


\begin{proof} Let us first prove \eqref{eq:UnifBound E esp t}. Note that \eqref{eq:InitialMinimality eps} applied with $\theta = 0$ and $v = w(0)$ directly entails
$$
\mathcal{E}_\e(0) \leq \frac12 \int_0^L a_1 |w'(0)|^2 \, dx .
$$
As for the subsequent times $t \in [0,T]$, \eqref{eq:OneSided Minimality eps} applied with $\theta = 0$ and $v= w(t)$ entails 
$$
\frac12 \int_0^L a_\e(t) |u'_\e(t)|^2 \, dx \leq \frac12 \int_0^L a_1 |w'(t)|^2 \, dx \leq  \frac{a_1}{2} \norme{w}^2_{L^\infty([0,T];H^1(\R))} .
$$
On the other hand, since $w$ is absolutely continuous from $[0,T]$ into $H^1(\R)$, we infer that 
$$\dot{w}' \in L^1([0,T]; L^2(\R))$$
is Bochner integrable. Therefore, gathering the uniform bound on the initial time energies together with Cauchy-Schwarz inequality for the scalar-product 
$$ (\xi,\eta) \in L^2((0,L)) \times  L^2((0,L)) \mapsto  \int_0^L a_\e(s) \xi\eta \, dx  \in \R$$
(for all time $s \in [0,t]$ fixed) and the Energy Balance \eqref{eq:EnergyBalance eps}, we get that 
\begin{multline*}
\mathcal{E}_\e(t) = \mathcal{E}_\e(0) + \int_0^t \int_0^L \sigma_\e(s)\dot{w}'(s) \, dxds \\
  \leq \E_\e(0) +  \int_0^t \left( \int_0^L a_\e(s) |u'_\e(s)|^2 \, dx \right)^\frac12 \left( \int_0^L a_\e(s) |\dot{w}'(s)|^2 \, dx \right)^{\frac12} \, ds   \leq  C(w,T) .
\end{multline*}

\medskip
We next show \eqref{eq:UnifBound Theta u sigma}. Let $\e >0$ and $t \in [0,T]$. First, as shown in \cite[Lemma 2.3, Formula (2.6)]{BIR}, there exists a constant $c >0 $ (only depending on $a_0$, $ a_1$ and $\kappa$) such that the function
$$
W_\e : \xi \in \R \mapsto \min \, \left\{ \frac{\kappa}{\e} + \frac12 \e a_0 |\xi|^2 ; \frac12 a_1 |\xi|^2 \right\}
$$
satisfies $ \C W_\e(\xi) \geq c | \xi | - \frac1c$ for all $\xi \in \R$. Remembering the definition \eqref{eq:evolution FG eps} of $a_\e(t) $ and the fact that
$$ \C W_\e(\xi) = \underset{\theta \in [0,1]}{\min} \left\{ \frac{\kappa \theta}{\e} + \frac12 \left( \frac{\theta}{\e a_0} + \frac{1-\theta}{a_1} \right)^{-1} |\xi|^2 \right\}$$
for all $\xi \in \R$, we in particular get that
$$\E_\e(t)  \geq \int_0^L \C W_\e \big(u'_\e(t) \big) \, dx \geq  c \norme{u'_\e(t)}_{L^1((0,L))} - \frac{L}{c} .
$$
Thus, using the equivalent norm in $BV((0,L))$ recalled in \eqref{eq:norme BV} leads to
$$\norme{u_\e(t)}_{BV((0,L))} \leq \norme{u_\e(t)'}_{L^1((0,L))} +  | w(t)(0)| + |w(t)(L)| \leq C(w,T).$$
Finally, the homogeneity in space of the stress $\sigma_\e(t) \in \R$ implies that
$$
\E_\e(t) \geq \frac12 \int_0^L a_\e(t) |u'_\e(t)|^2 \, dx \geq \frac{T}{2 a_1} |\sigma_\e(t)|^2
$$
as well, thus concluding \eqref{eq:UnifBound Theta u sigma}.
\end{proof}

From the uniform bounds \eqref{eq:UnifBound Theta u sigma}, we obtain compactness properties. 
\begin{prop}\label{cor:subsequence}
There exists a subsequence (not relabelled and independent of $t$) and a non-negative Radon measure $ \mu : [0,T] \to \M \big( [0,L];\R^+ \big)$, which is non-decreasing in time, such that
\begin{subequations}
	\begin{empheq}[left=\empheqlbrace]{align}
	\, & \Theta_\e(t) \to 1 \quad \text{strongly in } L^1((0,L)) \text{ for all } t \in [0,T] \label{eq:CV Theta eps to 1} \\
	\, & \mu_\e(t) := \frac{1 - \Theta_\e(t) }{\e} \mathds{1}_{(0,L)} \rightharpoonup  \mu(t) \quad \text{weakly-* in } \M \big( [0,L]) \text{ for all } t \in [0,T] \label{eq:mu eps to mu}
	\end{empheq}
\end{subequations}
as $\e \searrow 0$.
\end{prop}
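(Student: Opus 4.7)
The plan is to obtain the strong convergence \eqref{eq:CV Theta eps to 1} essentially for free from the uniform bound already established in Proposition \ref{prop:Bounded Energy}, and then to extract the measure-valued subsequence of \eqref{eq:mu eps to mu} by combining Banach--Alaoglu with a Helly selection principle that exploits the time-monotonicity of $\mu_\e$.

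For \eqref{eq:CV Theta eps to 1}, I would simply rewrite \eqref{eq:UnifBound Theta u sigma} as $\|1 - \Theta_\e(t)\|_{L^1((0,L))} \leq C \e$, which yields $\Theta_\e(t) \to 1$ strongly in $L^1((0,L))$ for every $t \in [0,T]$ without any subsequence extraction (the convergence is in fact uniform in $t$).

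For \eqref{eq:mu eps to mu}, the key observation is twofold. First, the uniform bound \eqref{eq:UnifBound Theta u sigma} reads precisely $\|\mu_\e(t)\|_{\M([0,L])} \leq C$ uniformly in $t$ and $\e$. Second, the monotonicity $\Theta_\e(t) \leq \Theta_\e(s)$ for $t \geq s$ stated in Proposition \ref{prop:FG} translates into $\mu_\e(s) \leq \mu_\e(t)$ in $\M([0,L];\R^+)$ whenever $s \leq t$, so that for each non-negative $\phi \in C([0,L])$ the scalar map $t \mapsto \langle \mu_\e(t), \phi\rangle$ is non-decreasing and uniformly bounded by $C \|\phi\|_\infty$. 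I would then select a countable dense family $\{\phi_k\}_{k \in \N}$ of non-negative functions in $C([0,L])$, apply Helly's selection theorem successively in $k$, and diagonalize to produce a single subsequence (not relabeled, independent of $t$) along which $\langle \mu_\e(t), \phi_k \rangle$ converges for every $k \in \N$ and every $t \in [0,T]$.

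For fixed $t$, the uniform mass bound together with a standard $3\e$-argument upgrades this to convergence against every $\phi \in C([0,L])$: the numerical sequence $\langle \mu_\e(t), \phi\rangle$ becomes Cauchy in $\R$, its limit $\ell_t(\phi)$ is a non-negative continuous linear functional on $C([0,L])$, and by Riesz representation it identifies a measure $\mu(t) \in \M([0,L];\R^+)$, giving the weak-* convergence $\mu_\e(t) \rightharpoonup \mu(t)$. The monotonicity of $t \mapsto \mu(t)$ is then inherited by passing to the limit in $\langle \mu_\e(s), \phi\rangle \leq \langle \mu_\e(t), \phi\rangle$ against arbitrary non-negative $\phi$. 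The main obstacle I anticipate is the need for a \emph{single} subsequence valid for every $t \in [0,T]$ simultaneously; this is precisely what the Helly-based argument, driven by the time-monotonicity of $\mu_\e$, is designed to deliver, whereas a naive pointwise-in-$t$ Banach--Alaoglu extraction could not be patched into a common subsequence without this additional structure.
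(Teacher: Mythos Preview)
Your proposal is correct and follows essentially the same approach as the paper: both use the uniform bound \eqref{eq:UnifBound Theta u sigma} directly for \eqref{eq:CV Theta eps to 1}, and both exploit the time-monotonicity of $\mu_\e$ together with the uniform mass bound to run a Helly-type selection. The only difference is that the paper invokes the ready-made generalized Helly theorem for duals of separable Banach spaces \cite[Lemma 7.2]{DMDSM}, whereas you reprove that lemma by hand via scalar Helly on a countable family of non-negative test functions, diagonalization, and a density/Riesz argument; one small wording point is that your ``dense family of non-negative functions'' should be understood as dense in the non-negative cone of $C([0,L])$ (so that the extension to all of $C([0,L])$ goes through the decomposition $\phi = \phi^+ - \phi^-$), but this is cosmetic.
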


\begin{rem}
Henceforth, we will work along the subsequence (not relabeled) mentionned in Proposition \ref{cor:subsequence}.
\end{rem}

\begin{proof}
One directly deduces from \eqref{eq:UnifBound Theta u sigma} that $\Theta_\e (t) $ strongly converges to $1$ in $L^1((0,L))$ as $\e \searrow 0$ for all time $t \in [0,T]$. Next, using the non-decreasing character (in time) of the non-negative Radon measures 
$$
\mu_\e : t \in [0,T] \mapsto \frac{1 - \Theta_\e(t) }{\e} \mathds{1}_{(0,L)} \in \M \big( [0,L];\R^+ \big)
$$
together with \eqref{eq:UnifBound Theta u sigma}, one can apply the generalized version of Helly's Theorem recalled in \cite[Lemma 7.2]{DMDSM} for the topological duals of separable Banach spaces and find a subsequence and a limit time-dependent non-negative Radon measure $\mu : [0,T] \to \M \big( [0,L];\R^+ \big)$ such that $\mu_\e(t) \rightharpoonup \mu(t)$ weakly-* in $\M \big( [0,L] \big)$ as $\e$ tends to $0$, for all $t \in [0,T]$. Note that the monotonicity in time of $\mu_\e$ is preserved by the pointwise weak-* convergence in $\M \big( [0,L] \big)$.
\end{proof}

\subsection{Initial time of the evolution}
We begin with a corollary of the analysis led in \cite[Theorem 3.1]{BIR} in the static setting, taking into account a prescribed Dirichlet boundary datum. We refer to Appendix \ref{section:appendix} for a more general statement of this proposition and its proof.
\begin{prop}
The functional $ \F_\e : L^1((0,L)) \to \bar{ \R^+}$ defined for all $u \in L^1((0,L))$ by
$$
 \F_\e(u) :=  \begin{cases}
{\displaystyle \int_0^L  \C W_\e(u') \, dx } & \quad \text{ if } u \in w(0) + H^1_0((0,L)) \\
+ \infty & \quad \text{ otherwise}
\end{cases}
$$
$\Gamma$-converges in $L^1$ as $\e \searrow 0$ to the functional
$$
 \F(u) := \begin{cases}
 \begin{array}{r}
\ds \int_0^L \bar W(u') \, dx + \sqrt{2\kappa a_0} \left\lvert D^s u \right\rvert \big( (0,L) \big) + \sqrt{2\kappa a_0} \big( \left\lvert w(0)(L) -u(L) \right\rvert + \left\lvert w(0)(0) -u(0) \right\rvert \big)  \\
 \quad \text{ if } u \in BV((0,L))
\end{array} \\
+ \infty \quad \text{ otherwise}
\end{cases}
$$
where
$$ \bar W : \xi \in \R \mapsto \underset{\eta \in \R}{\inf} \left\{ \frac{a_1}{2} \left\lvert \xi - \eta \right\rvert^2 + \sqrt{2 \kappa a_0} \left\lvert \eta \right\rvert \right\} .$$
\end{prop}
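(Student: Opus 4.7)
The plan is to follow the standard two-step scheme for $\Gamma$-convergence, adapting the argument of \cite[Theorem 3.1]{BIR} to the one-dimensional setting with Dirichlet boundary conditions. First, I would record the explicit form of $\C W_\e$ via \cite[Lemma 3.1]{AL} (already recalled earlier in the text), namely
$$\C W_\e(\xi) = \min_{\theta \in [0,1]} \left\{ \frac{\kappa \theta}{\e} + \frac12 \left( \frac{\theta}{\e a_0} + \frac{1-\theta}{a_1} \right)^{-1} |\xi|^2 \right\},$$
and then show by an elementary optimization over $\theta$ that $\C W_\e$ is convex, coercive ($\C W_\e(\xi) \geq c|\xi| - 1/c$ as already used in Proposition \ref{prop:Bounded Energy}), shares with $\bar W$ the same recession function $\xi \mapsto \sqrt{2 \kappa a_0} |\xi|$, and converges pointwise (and monotonically from above) to $\bar W$ as $\e \searrow 0$.

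For the liminf inequality, consider $u_\e \to u$ in $L^1$ with $\liminf \F_\e(u_\e) < + \infty$. Coercivity forces $\{u_\e\}$ to be bounded in $BV((0,L))$, so $u \in BV((0,L))$. To absorb the Dirichlet boundary term, I would extend $u_\e$ by $w(0)$ to $\tilde u_\e \in BV(\R)$; since $u_\e \in w(0) + H^1_0((0,L))$, this extension has no boundary jump, while the analogous extension $\tilde u$ of $u$ acquires atomic masses of sizes $w(0)(0) - u(0^+)$ and $u(L^-) - w(0)(L)$ at $0$ and $L$. Choosing any open neighbourhood $U \supset [0,L]$ and extending $\C W_\e$ (resp.\ $\bar W$) by $\frac{a_1}{2}|\xi|^2$ (resp.\ by itself) outside $[0,L]$, I apply Reshetnyak-type lower semicontinuity for a sequence of convex integrands with common recession function to get that $\liminf_\e \int_U \C W_\e(\tilde u_\e') \, dx$ dominates $\int_U \bar W(\tilde u^a) \, dx + \sqrt{2\kappa a_0} |\tilde u^s|(U)$. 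Restricting back to $(0,L)$, the bulk part and the interior singular part yield the first two terms of $\F(u)$, while the two boundary atoms produce exactly $\sqrt{2\kappa a_0}\big(|w(0)(L) - u(L)| + |w(0)(0) - u(0)|\big)$.

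For the limsup, by density and a diagonal argument it suffices to build a recovery sequence for $u \in BV((0,L))$. I would split $u'$ into its absolutely continuous, Cantor, and jump parts and construct $u_\e$ piecewise. On the absolutely continuous part, for each value $\xi = u'(x)$ I realize the optimal decomposition $\xi = e + p$ achieving $\bar W(\xi) = \frac{a_1}{2}|e|^2 + \sqrt{2\kappa a_0}|p|$ by taking $u_\e$ affine with slope $e$ away from a collection of small intervals of total length of order $\e$ on which the slope activates the damaged branch of $\C W_\e$ and accounts for the plastic part $p$; the singular part of $Du$ is handled similarly, concentrating the "damaged" intervals on the singular support. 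Finally, to enforce $u_\e \in w(0) + H^1_0((0,L))$, I insert short boundary layers at $0$ and $L$ on which $u_\e$ interpolates affinely between $w(0)$ and the interior values, tuned so that their slope lies in the damaged regime and contributes $\sqrt{2\kappa a_0}|w(0)(0) - u(0)|$ and $\sqrt{2\kappa a_0}|w(0)(L) - u(L)|$ in the limit.

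The main obstacle is precisely the boundary-layer construction in the recovery sequence: one must simultaneously enforce the hard constraint $u_\e(0) = w(0)(0)$, keep the layer width small enough that it does not interact with the interior construction, and ensure that on the layer the derivative sits in the regime where $\C W_\e$ equals its affine upper envelope $\sqrt{2\kappa a_0}|\cdot|$ plus a vanishing correction. Once the ansatz is balanced correctly, the bulk and singular parts are handled by the classical "staircase" approximation used in \cite{BIR}, and the overall $\limsup$ bound matches $\F(u)$ term by term.
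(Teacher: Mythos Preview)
Your liminf strategy coincides with the paper's: extend the competitors by $w(0)$ on a larger interval and invoke the unconstrained $\Gamma$-liminf from \cite[Theorem~3.1]{BIR} there, so that the boundary jumps of the extended limit produce the two terms $\sqrt{2\kappa a_0}\,|w(0)(0)-u(0)|$ and $\sqrt{2\kappa a_0}\,|w(0)(L)-u(L)|$.

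For the limsup, however, the paper takes a different and somewhat cleaner route. Instead of building explicit boundary layers, it invokes the density/approximation result \cite[Theorem~3.5]{Mora}: given $u\in BV((0,L))$ with the decomposition $e(u)=e+p^a$ realizing $\bar W(e(u))$, one finds $(u_k,e_k,p_k)$ with $u_k-w(0)\in C^\infty_c((0,L))$, $e_k\to e$ in $L^2$, $p_k\rightharpoonup p$ weakly-$*$ in $\M([0,L])$ and $\int|p_k|\to |p|([0,L])$. Reshetnyak's continuity theorem then gives $\int I_\KK^*(p_k)\,dx\to \int_{[0,L]} I_\KK^*(dp/d|p|)\,d|p|$, which already contains the boundary contribution. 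Since each $u_k$ is smooth and matches $w(0)$ at the endpoints, the recovery sequence for $u_k$ is obtained directly from \cite[Proposition~3.3]{BIR} with no boundary modification, and a diagonal argument concludes.

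The upshot: the obstacle you single out --- balancing the boundary-layer ansatz so that the hard Dirichlet constraint, the layer width, and the damaged regime of $\C W_\e$ are all compatible --- is precisely what the paper avoids by outsourcing it to Mora's approximation theorem. Your direct construction should work (in dimension one the boundary layer is a single affine segment and the balancing is not hard), and it has the virtue of being self-contained; the paper's approach is shorter but imports a nontrivial $BD$-density result. One caveat with your plan: handling the Cantor part of $Du$ by a direct ``staircase'' construction is delicate, and in practice one usually still needs a preliminary density step to reduce to $SBV$ or piecewise-affine functions before building the explicit recovery sequence.
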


\medskip

In particular, we infer that 
$$
\E_\e(0) = \F_\e(u_\e(0)) = \min \F_\e.
$$
Indeed, according to \cite[Lemma 3.1]{AL} we first remark that 
$$\C W_\e(u'_\e(0)) \leq \frac{\kappa}{\e} (1 - \Theta_\e(0)) + \frac{a_\e(0)}{2} \left\lvert u'_\e(0) \right\rvert^2$$
hence $\F_\e(u_\e(0)) \leq \E_\e(0)$. Besides, \eqref{eq:InitialMinimality eps} implies that $\E_\e(0) \leq \F_\e(v)$ for all $v \in w(0) + H^1_0((0,L))$, where we exchanged the infimum and the integral thanks to Aumann's criterion.

\medskip

Therefore, we deduce from the Fundamental Theorem of $\Gamma$-convergence together with the bounds \eqref{eq:UnifBound E esp t} and \eqref{eq:UnifBound Theta u sigma} that there exist a further subsequence (still not relabeled) and a displacement $ u(0) \in BV((0,L))$ such that
\begin{equation}\label{eq:Initial Time}
u_\e(0) \rightharpoonup u(0) \text{ weakly-* in } BV((0,L)) \quad \text{and} \quad \E_\e(0) \to \E(0) := \F(u(0)) = \min \F
\end{equation}
when $\e \searrow 0$.

\subsection{Time independence of the subsequences}
We first show that, along the whole subsequence introduced in Proposition \ref{cor:subsequence} (not relabeled and independent of $t$), $\{ \sigma_\e(t) \}_\e$ pointwise converges to some limit stress $ \sigma(t)$ for all time in $[0,T]$. 

\begin{prop}\label{cor:Sigma=Sigma_t}
Along the subsequence introduced in Proposition \ref{cor:subsequence} (independent of $t$, not relabeled), we have that for all time $t \in [0,T]$,
$$
\mu_\e(t) \big( [0,L] \big) =  \int_0^L \frac{1 - \Theta_\e(t)}{\e} \, dx \to l(t):= \mu(t) \big( [0,L ] \big) \geq 0
$$
and
\begin{equation}\label{eq:2}
\sigma_\e(t) \to \sigma (t) := \frac{ \big[ w(t) \big]^L_0 }{ \frac{l(t)}{a_0} + \frac{L}{a_1}} 
\end{equation}
when $ \e \searrow 0$, where $\big[ w(t) \big]^L_0 := w(t)(L) - w(t)(0)$. Moreover, 
\begin{equation}\label{eq:sigma in KK}
\sigma (t) \in \KK = [- \sqrt{2 \kappa a_0}, \sqrt{2 \kappa a_0} ] \quad \text{for all $t \in [0,T]$}. 
\end{equation}
In particular, we infer that $ \sigma \in L^\infty([0,T];\KK)$,
\begin{equation}\label{eq:CV faible * sigma}
\sigma_\e \overset{*}{\rightharpoonup} \sigma \quad \text{weakly-* in } L^\infty([0,T];\R) 
\end{equation} 
and for all $t \in [0,T]$,
\begin{equation}\label{eq:E(t)}
\E_\e(t) \underset{\e \searrow 0}{\longrightarrow} \E(0) + \int_0^t \int_0^L \sigma(s) \dot{w}'(s)(x) \, dx ds = \frac{\big[ w(t)\big]^L_0}{2}  \sigma (t) + \kappa l(t) =: \E(t) .
\end{equation}
\end{prop}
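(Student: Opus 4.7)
The plan is to prove the three statements in sequence: pointwise convergence of the stresses $\sigma_\e(t)$ and the total damage masses $\mu_\e(t)([0,L])$, the stress constraint, and finally the weak-* convergence and limit energy.

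First, I would combine the formula $u'_\e(t) = \sigma_\e(t)\big(\frac{1-\Theta_\e(t)}{\e a_0}+\frac{\Theta_\e(t)}{a_1}\big)$ from \eqref{eq:u'_eps} with the Dirichlet condition $u_\e(t) - w(t) \in H^1_0((0,L))$: integrating the identity on $(0,L)$ produces
$$[w(t)]_0^L \,=\, \sigma_\e(t)\left(\frac{\mu_\e(t)([0,L])}{a_0} + \frac{1}{a_1}\int_0^L \Theta_\e(t)\,dx\right).$$
Since $\mathbf{1}_{[0,L]} \in C([0,L])$, testing the weak-* convergence $\mu_\e(t) \rightharpoonup \mu(t)$ in $\M([0,L])$ against it gives $\mu_\e(t)([0,L]) \to l(t)$, while \eqref{eq:CV Theta eps to 1} gives $\int_0^L \Theta_\e(t)\,dx \to L$. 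Solving for $\sigma_\e(t)$ and passing to the limit yields the formula \eqref{eq:2}.

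To establish the stress constraint $\sigma(t) \in \KK$, I would exploit the One-Sided Minimality \eqref{eq:OneSided Minimality eps} at time $t$ with $v = u_\e(t)$ and constant $\theta = \alpha \in (0,1]$. Bringing the first right-hand side term to the left and dividing by $\alpha > 0$ yields
$$\frac12 \int_0^L \frac{a_\e(t)\bigl(a_\e(t)-\e a_0\bigr)}{\alpha a_\e(t)+(1-\alpha)\e a_0}\,|u'_\e(t)|^2\,dx \;\le\; \frac{\kappa}{\e}\int_0^L \Theta_\e(t)\,dx.$$
As $a_\e(t) \ge \e a_0$, the denominator is non-decreasing in $\alpha$, so monotone convergence when $\alpha \searrow 0$ replaces it by $\e a_0$. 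Using $a_\e(t)|u'_\e(t)|^2 = \sigma_\e(t)^2/a_\e(t)$ and the algebraic identity $(a_\e(t)-\e a_0)/a_\e(t) = (a_1-\e a_0)\Theta_\e(t)/a_1$ (which follows directly from the expression of $a_\e(t)$), the inequality reduces, after dividing by the positive quantity $\int_0^L \Theta_\e(t)\,dx$, to $\sigma_\e(t)^2 \le 2\kappa a_0 L_\e^2$. Since $L_\e \to 1$, passing to the limit delivers $|\sigma(t)| \le \sqrt{2\kappa a_0}$, i.e., \eqref{eq:sigma in KK}.

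The weak-* convergence \eqref{eq:CV faible * sigma} follows from the pointwise convergence of the previous step together with the uniform bound $|\sigma_\e| \le C$ from \eqref{eq:UnifBound Theta u sigma}, via duality against $L^1([0,T])$ and dominated convergence. For \eqref{eq:E(t)}, since $\sigma_\e(t) = a_\e(t) u'_\e(t)$ is space-constant,
$$\frac12 \int_0^L a_\e(t)\,|u'_\e(t)|^2\,dx \,=\, \frac{\sigma_\e(t)}{2}\int_0^L u'_\e(t)\,dx \,=\, \frac{\sigma_\e(t)\,[w(t)]_0^L}{2},$$
so $\E_\e(t) = \tfrac12 \sigma_\e(t)[w(t)]_0^L + \kappa\,\mu_\e(t)([0,L])$; taking $\e \searrow 0$ gives the rightmost expression in \eqref{eq:E(t)}. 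The middle expression comes from passing to the limit in the Energy Balance \eqref{eq:EnergyBalance eps}, using $\int_0^L \dot w'(s)\,dx = [\dot w(s)]_0^L$, the initial convergence $\E_\e(0) \to \E(0)$ recorded in \eqref{eq:Initial Time}, and dominated convergence along $\sigma_\e(s) \to \sigma(s)$ (the dominating function $C\,|[\dot w(s)]_0^L|$ lies in $L^1([0,T])$ by absolute continuity of $w$).

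The delicate point, and the main obstacle, is the stress constraint: the $L^\infty$ bound $|\sigma_\e(t)| \le C$ coming directly from the energy estimate is far too weak to imply $\sigma(t) \in \KK$. The sharper bound $|\sigma_\e(t)|^2 \le 2\kappa a_0 L_\e^2$ must be extracted from a careful use of the One-Sided Minimality with a parameter $\alpha \searrow 0$, combined with the algebraic simplification specific to the harmonic-mean structure of the one-dimensional $\G$-closure.
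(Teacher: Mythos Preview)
Your argument is correct. For the convergence of $\mu_\e(t)([0,L])$, of $\sigma_\e(t)$, for the weak-* convergence and for the limit energy you follow essentially the same steps as the paper.

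The only genuine difference lies in the proof of the stress constraint \eqref{eq:sigma in KK}. The paper relies on the explicit piecewise formula \eqref{eq:SQ W_eps(t)} for the convex envelope $\C W^t_\e$, argues by contradiction that $|\sigma_\e(t)|>\sqrt{2\kappa a_0}\,L_\e$ would force a strict inequality incompatible with \eqref{eq:SQ W_eps(t)} on the set $\{a_\e(t)>\e a_0\}$, and then splits into two cases according to whether $\limsup_\e \LL^1(\{a_\e(t)>\e a_0\})$ vanishes or not. Your route is more direct: you plug $v=u_\e(t)$ and a constant $\theta=\alpha$ into the One-Sided Minimality, divide by $\alpha$, and send $\alpha\searrow 0$ by monotone convergence; the harmonic-mean identity $(a_\e(t)-\e a_0)/a_\e(t)=(a_1-\e a_0)\Theta_\e(t)/a_1$ then collapses the inequality to $\sigma_\e(t)^2\le 2\kappa a_0 L_\e^2$ after dividing by $\int_0^L\Theta_\e(t)\,dx$ (which is positive for small $\e$ by \eqref{eq:UnifBound Theta u sigma}). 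This avoids both the convex-envelope computation and the case analysis, and in fact shows that the paper's first case ($\limsup_\e \LL^1(\{a_\e(t)>\e a_0\})=0$) never occurs. The paper's approach, on the other hand, exploits structural information (equation \eqref{eq:SQ W_eps(t)}) that is reused later, so it is not wasted effort in context.
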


\begin{proof}
We work along the subsequence introduced in Proposition \ref{cor:subsequence}. Let $t \in [0,T]$. Note that by the weak-* convergence of $\mu_\e(t)$ to $\mu(t)$ in $\M([0,L])$, one gets that
$$
\mu_\e(t) \big( [0,L] \big) = \int_0^L \frac{1 - \Theta_\e(t)(x)}{\e} \, dx \underset{\e \searrow 0}{\longrightarrow} l(t):= \mu(t) \big( [0,L] \big) \geq 0.
$$
Since $u_\e(t) \in w(t) + H^1_0((0,L))$ satisfies $u'_\e(t) = \sigma_\e(t) a_\e(t)^{-1}$ and $\sigma_\e(t)$ is homogeneous in space, we infer by the Integration by Parts Formula in $H^1((0,L))$ that 
$$
\big[w(t)\big]^L_0 = \int_0^L u'_\e(t)(x) \, dx = \sigma_\e(t) \int_0^L \left( \frac{1- \Theta_\e(t)(x)}{\e a_0} + \frac{\Theta_\e(t)(x)}{a_1} \right) \, dx .
$$
Since 
$$
\int_0^L \left( \frac{1- \Theta_\e(t)(x)}{\e a_0} + \frac{\Theta_\e(t)(x)}{a_1} \right) \, dx  \underset{\e \searrow 0}{\longrightarrow}   \frac{l(t)}{a_0} + \frac{L}{a_1} > 0,
$$
we obtain
$$
\sigma_\e(t) \underset{\e \searrow 0}{\longrightarrow} \sigma(t) := \frac{\big[ w(t) \big]^L_0 }{ \frac{l(t)}{a_0} + \frac{L}{a_1} },
$$
which proves \eqref{eq:2} and \eqref{eq:CV faible * sigma}. Next, let us remark that for all $\e >0$ and $\LL^1$-a.e. in $\{ a_\e(t) > \e a_0 \}$,
$$
\frac12 \sigma_\e(t) u_\e'(t) > \left\lvert \sigma_\e(t) \right\rvert \sqrt{ \frac{2 \kappa a_0 \Theta_\e(t)}{a_\e(t) (a_\e(t) - \e a_0)}} - \frac{\kappa a_0 \Theta_\e(t)}{a_\e(t) - \e a_0} \quad \text{if } \frac{\left\lvert \sigma_\e(t) \right\rvert}{\sqrt{2 \kappa a_0}} > L_\e.
$$
Indeed, 
$$\frac12 \sigma_\e(t) u_\e'(t) - \left\lvert \sigma_\e(t) \right\rvert \sqrt{ \frac{2 \kappa a_0 \Theta_\e(t)}{a_\e(t) (a_\e(t) - \e a_0)}} + \frac{\kappa a_0 \Theta_\e(t)}{a_\e(t) - \e a_0} = \left( \sqrt{\frac{a_\e(t)}{2}} \left\lvert u_\e'(t) \right\rvert - \sqrt{ \frac{\kappa a_0  \Theta_\e(t)}{a_\e(t) - \e a_0} } \right)^2 \geq 0$$
and the equality holds if and only if $\left\lvert \sigma_\e(t) \right\rvert = \sqrt{2 \kappa a_0} L_\e$. Thus, by homogeneity in space of $\sigma_\e$ and \eqref{eq:SQ W_eps(t)} we infer that $ \left\lvert \sigma_\e(t) \right\rvert \leq \sqrt{2 \kappa a_0} L_\e $ for all $ \e > 0 $ such that $ \{ a_\e(t) > \e a_0 \} \neq \emptyset$.
\begin{itemize}
\item Either $\limsup_{\e \searrow 0} \LL^1 \left( \{ a_\e(t) > \e a_0 \} \right) = 0$, hence 
$$
\quad \quad \quad L \left\lvert  \sigma (t) \right\rvert = \lim_\e L \left\lvert \sigma_\e(t) \right\rvert = \lim_\e \left( \int_{\{ a_\e(t) = \e a_0 \}} \e a_0 \left\lvert u_\e'(t) \right\rvert \, dx + \left\lvert \sigma_\e(t) \right\rvert \LL^1 \left( \{ a_\e(t) > \e a_0 \} \right)  \right) = 0
$$
and $ \sigma (t) = 0 \in \KK$.
\item Or $\limsup_{\e \searrow 0 }  \LL^1 \left( \{ a_\e(t) > \e a_0 \} \right) > 0$. Up to another subsequence (still not relabeled), we can assume that $\LL^1 \left( \{ a_\e(t) > \e a_0 \} \right) > 0$ for all $\e > 0$. In particular, it implies that $\{ a_\e(t) > \e a_0 \} \neq \emptyset$ hence $\left\lvert \sigma_\e(t) \right\rvert \leq \sqrt{2 \kappa a_0} L_\e$ for all $\e > 0$. Taking the limit when $\e \searrow 0$, we obtain that $ \left\lvert  \sigma (t) \right\rvert \leq \sqrt{2 \kappa a_0}$ hence $ \sigma (t) \in \KK$.
\end{itemize}
In any cases, we get that the stress constraint \eqref{eq:sigma in KK} is satisfied. Thus, by homogeneity in space of $\sigma_\e(t)$, the Dominated Convergence Theorem entails that
$$
\int_0^t \int_0^L \sigma_\e(s) \dot{w}'(s)(x) \, dx ds = \int_0^t \sigma_\e(s) \left( \int_0^L \dot{w}'(s)(x) \, dx \right) \, ds \underset{\e \searrow 0}{ \longrightarrow} \int_0^t \int_0^L  \sigma(s) \dot{w}'(s)(x) \, dx ds.
$$
Therefore, using the convergence of the energies at the initial time \eqref{eq:Initial Time} together with the Energy Balance \eqref{eq:EnergyBalance eps}, one gets that
$$
\E_\e(t)  \underset{\e \searrow 0}{\longrightarrow} \E(0) + \int_0^t \int_0^L \sigma(s) \dot{w}'(s)(x) \, dx ds =: \E(t) \in \R^+.
$$
By homogeneity in space of $\sigma_\e(t)$ again, since
$$
{\displaystyle
\E_\e(t) = \frac12 \sigma_\e(t) \int_0^L w'(t)(x) \, dx + \kappa \int_0^L \frac{1 - \Theta_\e(t)(x)}{\e} \, dx \underset{\e \searrow 0}{\longrightarrow}   \frac{\big[ w(t)\big]^L_0}{2}  \sigma (t) + \kappa l(t) }
$$
we deduce that
$$
\E(t) = \frac{\big[ w(t)\big]^L_0}{2}  \sigma (t) + \kappa l(t)
$$
which completes the proof of \eqref{eq:E(t)} and Proposition \ref{cor:Sigma=Sigma_t}.
\end{proof}

We now define what will play the role of the elastic and plastic strains at the scale $\e >0$, by setting
\begin{subequations}
\begin{empheq}[left=\empheqlbrace]{align}
	& e_\e = \sigma_\e \frac{\Theta_\e}{a_1} \in L^2((0,L)) \label{seq:e_eps}\\
	& p_\e = \frac{\sigma_\e}{a_0} \mu_\e = \sigma_\e \frac{1 - \Theta_\e}{\e a_0} \mathds{1}_{(0,L)} \in L^2((0,L)) \label{seq:p_eps}
	\end{empheq}
\end{subequations}
which, by \eqref{eq:u'_eps}, satisfy the additive decomposition $u'_\e = e_\e + p_\e $ at all time. Using Proposition \ref{cor:subsequence} together with the homogeneity in space of $\sigma_\e$ and $\sigma$, we infer that for all $t \in [0,T]$ 
\begin{subequations}
\begin{empheq}[left=\empheqlbrace]{align}
	& e_\e(t) \to e(t) := \frac{ \sigma(t)}{a_1} \quad \text{strongly in } L^2((0,L)) \label{seq:def e}\\
	& p_\e(t)  \rightharpoonup p(t) := \frac{\sigma(t)}{a_0} \mu(t) \quad \text{weakly-* in } \M \big( [0,L] \big)  \label{seq:def p} \\
	& u'_\e(t) \rightharpoonup \sigma(t) \left( \frac{\mu(t) \res (0,L)}{a_0} + \frac{ \LL^1 }{a_1} \right) = p(t) \res (0,L) + e(t) \LL^1  \quad \text{weakly-* in } \M ((0,L)) \label{seq:u' eps to Du}
	\end{empheq}
\end{subequations}
when $\e \searrow 0$. Let us recall that 
\begin{equation}\label{eq:link mu l}
\mu(t)([0,L]) = l(t) \quad \text{and} \quad p(t)([0,L]) = \frac{\sigma(t)}{a_0} l(t).
\end{equation}

\medskip
The uniform bound \eqref{eq:UnifBound Theta u sigma} ensures that for all $t \in [0,T]$, there exist a further subsequence (depending on $t$, not relabeled) and a displacement $u(t) \in BV((0,L))$ such that
$$
u_\e(t) \rightharpoonup u(t) \quad \text{weakly-* in } BV((0,L))
$$
when $\e \searrow 0$. In particular, we deduce from \eqref{seq:u' eps to Du} that 
$$
Du(t) = p(t) \res (0,L) + e(t) \LL^1  \quad \text{in } \M((0,L);\R)
$$ 
is independent of the subsequence defining $u(t)$. Moreover, one can check that
\begin{equation}\label{eq:relaxed boundary condition}
p(t) \res \{ 0,L \} = \big(w(t) - u(t) \big) \big( \delta_L - \delta_0 \big) .
\end{equation}
Indeed, extending the problem on a larger open interval $[0,L] \subset \O'$ and setting 
\begin{empheq}[left=\empheqlbrace]{align}
	& \bar u_\e(t) = u_\e(t) \mathds{1}_{(0,L)} + w(t) \mathds{1}_{\O' \setminus (0,L)} \nonumber \\
	& \bar p_\e(t) = p_\e(t) \mathds{1}_{[0,L]} \nonumber \\
	& \bar e_\e(t) = e_\e(t) \mathds{1}_{(0,L)} + w'(t) \mathds{1}_{\O' \setminus (0,L)} \nonumber
	\end{empheq}
one can check that 
\begin{empheq}[left=\empheqlbrace]{align}
	& \bar u_\e(t) \rightharpoonup u(t) \mathds{1}_{ (0,L)} +	w(t) \mathds{1}_{ \O' \setminus (0,L)} \quad \text{weakly-* in } BV(\O') \nonumber \\
	& \bar p_\e(t) \rightharpoonup \bar p(t) : = p(t) \mathds{1}_{[0,L]}  \quad \text{weakly-* in } \M(\O') \nonumber \\
	& \bar e_\e(t) \rightharpoonup \bar e(t) := e(t) \mathds{1}_{ (0,L)}  + w'(t)  \mathds{1}_{ \O' \setminus (0,L)} \quad \text{weakly in } L^2(\O') \nonumber
	\end{empheq}
when $\e \searrow 0$. Therefore, using \cite[Remark 2.3 (i)]{Temam}, we get that
$$
D\bar u_\e(t) = \bar e_\e(t) + \bar p_\e(t) 
 \rightharpoonup \bar e(t) + \bar p (t) = Du(t) \mathds{1}_{(0,L)} + w'(t)  \mathds{1}_{ \O' \setminus (0,L)} \LL^1 + \big( w(t) - u(t) \big) \big( \delta_L - \delta_0 \big)
$$
which implies \eqref{eq:relaxed boundary condition}. In particular, we infer that the limit displacement $u(t)$ is actually independent of the subsequence. Indeed, let $u_1(t)$ and $u_2(t) \in BV((0,L))$ be two weak limits of $\{ u_\e(t) \}_{\e > 0}$ in $BV((0,L))$. On the one hand, \eqref{seq:u' eps to Du} entails that $
D(u_1(t) - u_2(t)) = 0 \quad \text{in } \M \big( (0,L) \big) 
$ hence $u_1(t) - u_2(t) = C(t) \in \R $ is homogeneous in space. On the other hand, the internal traces of $u_1(t)$ and $u_2(t)$ being prescribed on $\{ 0,L \}$ by \eqref{eq:relaxed boundary condition}, we infer that $C(t) = \big(u_1(t) - u_2(t) \big) (L) = 0$ hence $u_1(t) = u_2(t)$ in $BV((0,L))$. Therefore, we infer that the whole sequence converges and there exists 
$$u : [0,T] \to BV((0,L))$$
such that
\begin{equation}\label{eq:u}
u_\e(t) \rightharpoonup u(t) \quad \text{weakly-* in } BV((0,L)) \text{ when } \e \searrow 0, \text{ for all } t \in [0,T].
\end{equation}

\medskip   

Note that $u(0) \in BV((0,L))$ was already given by \eqref{eq:Initial Time} and the static analysis led in \cite{BIR}, entailing the following Constitutive Equation at the initial time.
\begin{prop}
\begin{equation}\label{eq:E 0}
\E(0) = \frac{La_1}{2} e(0)^2 + \sqrt{2\kappa a_0} \left\lvert p(0) \right\rvert \big( [0,L ] \big)
\end{equation}
and
\begin{equation}\label{eq:Constitutive Equation Initial Time}
\left( 2 \kappa a_0  -  \sigma(0)^2 \right) l(0) = 0.
\end{equation}
\end{prop}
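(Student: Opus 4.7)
My strategy is to combine the two characterizations of $\E(0)$ already established: the explicit formula $\E(0) = \frac{[w(0)]^L_0}{2}\sigma(0) + \kappa l(0)$ from \eqref{eq:E(t)}, and the variational identity $\E(0) = \F(u(0)) = \min \F$ from \eqref{eq:Initial Time}. I will show that both \eqref{eq:E 0} and \eqref{eq:Constitutive Equation Initial Time} are equivalent consequences of the single identity $l(0)\bigl(|\sigma(0)| - \sqrt{2\kappa a_0}\bigr)^2 = 0$, which I derive by sandwiching $\E(0)$ between two expressions that turn out to coincide.

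First I would upper bound $\F(u(0))$ by the right-hand side of \eqref{eq:E 0}. Writing the Lebesgue decomposition $p(0) = p(0)^a \LL^1 + p(0)^s$ on $[0,L]$, the additive decomposition in \eqref{seq:u' eps to Du} yields $u'(0) = e(0) + p(0)^a$ almost everywhere in $(0,L)$ and $D^s u(0) = p(0)^s \res (0,L)$. From the very definition of the infimal convolution one has $\bar W\bigl(e(0) + p(0)^a\bigr) \leq \frac{a_1}{2} e(0)^2 + \sqrt{2\kappa a_0}\,|p(0)^a|$ pointwise, so integration produces $\int_0^L \bar W(u'(0))\, dx \leq \frac{La_1}{2} e(0)^2 + \sqrt{2\kappa a_0}\,|p(0)^a|((0,L))$. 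Combining with $|D^s u(0)|((0,L)) = |p(0)^s|((0,L))$ and with the boundary identification $|p(0)|(\{0,L\}) = |w(0)(L) - u(0)(L)| + |w(0)(0) - u(0)(0)|$ read off from \eqref{eq:relaxed boundary condition} then gives
$$\E(0) = \F(u(0)) \leq \frac{La_1}{2} e(0)^2 + \sqrt{2\kappa a_0}\,|p(0)|([0,L]).$$

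Next I would compute the reverse inequality algebraically. Substituting $[w(0)]^L_0 = \sigma(0)\bigl(l(0)/a_0 + L/a_1\bigr)$ from \eqref{eq:2}, together with $e(0) = \sigma(0)/a_1$ and $|p(0)|([0,L]) = |\sigma(0)|\,l(0)/a_0$ (which follows from $p(0) = \sigma(0)\mu(0)/a_0$, $\mu(0) \geq 0$, and \eqref{eq:link mu l}), expanding the square gives the identity
$$\E(0) - \Bigl(\tfrac{La_1}{2} e(0)^2 + \sqrt{2\kappa a_0}\,|p(0)|([0,L])\Bigr) = \frac{l(0)}{2a_0}\bigl(|\sigma(0)| - \sqrt{2\kappa a_0}\bigr)^2 \geq 0.$$
Combined with the first step, both inequalities must be equalities: \eqref{eq:E 0} holds and $l(0)\bigl(|\sigma(0)| - \sqrt{2\kappa a_0}\bigr)^2 = 0$. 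The latter, together with the stress constraint $|\sigma(0)| \leq \sqrt{2\kappa a_0}$ from \eqref{eq:sigma in KK}, is equivalent to \eqref{eq:Constitutive Equation Initial Time}, since it forces either $l(0) = 0$ or $\sigma(0)^2 = 2\kappa a_0$, both of which give $(2\kappa a_0 - \sigma(0)^2) l(0) = 0$.

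I do not anticipate any serious obstacle: all ingredients (the explicit formulas for $\E(0)$ and $\bar W$, the decomposition of $Du(0)$, the boundary representation of $p(0)$) are already established. The only point that requires some care is the correct bookkeeping of the atomic, diffuse singular, and absolutely continuous parts of the limit plastic measure $p(0)$ when evaluating $\F(u(0))$, and recognizing that the perfect-square identity makes the variational upper bound tight.
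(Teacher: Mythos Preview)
Your proposal is correct and follows essentially the same route as the paper: you upper-bound $\F(u(0))$ via the inf-convolution definition applied to the decomposition $u'(0)=e(0)+p(0)^a$, then derive the reverse inequality through the same perfect-square identity $\E(0)-\bigl(\tfrac{La_1}{2}e(0)^2+\sqrt{2\kappa a_0}\,|p(0)|([0,L])\bigr)=\tfrac{l(0)}{2a_0}(|\sigma(0)|-\sqrt{2\kappa a_0})^2$, and sandwich to conclude both \eqref{eq:E 0} and \eqref{eq:Constitutive Equation Initial Time}. The only cosmetic difference is that the paper passes directly from the vanishing of the perfect square to \eqref{eq:Constitutive Equation Initial Time} without invoking the stress constraint (which is indeed unnecessary, since $l(0)(|\sigma(0)|-\sqrt{2\kappa a_0})^2=0$ and $(2\kappa a_0-\sigma(0)^2)l(0)=0$ are equivalent regardless).
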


\begin{proof}
Gathering \eqref{seq:u' eps to Du} and \eqref{eq:relaxed boundary condition}, we can identify the absolutely continuous and singular parts (with respect to $\LL^1$) in the Radon-Nikod\'ym decompositions of $Du(0)$ and $p(0)$:
$$
\begin{cases}
\, D^a u(0) = e(0) \LL^1 + p(0)^a \\
\,  p(0)^s = D^s u(0) \mathds{1}_{(0,L)} + (w(0) - u(0)) \left( \delta_L - \delta_0 \right).
 \end{cases}
$$
By definition of the inf-convolution and \eqref{eq:Initial Time}, we get that $\bar W(D^a u(0)) \leq \frac{a_1}{2} e(0)^2 + \sqrt{2 \kappa a_0} \left\lvert p(0)^a \right\rvert $ $\LL^1$-a.e. in $(0,L)$ and $ \E(0) = \F(u(0)) \leq \frac{La_1}{2} e(0)^2 + \sqrt{2\kappa a_0} \left\lvert p(0) \right\rvert \big( [0,L ] \big)$, where we identified absolutely continuous measures with their densities. Besides, combining \eqref{eq:E(t)}, \eqref{eq:2} and \eqref{eq:link mu l}, we also have that 
\begin{multline*}
 \E(0) = \frac{\sigma(0)^2}{2} \left( \frac{l(0)}{a_0} + \frac{L}{a_1} \right) + \kappa l(0) \\
 = \frac{La_1}{2} e(0)^2 + \sqrt{2\kappa a_0} \left\lvert p(0) \right\rvert \big( [0,L ] \big) + \left( \sqrt{2 \kappa a_0} - \left\lvert \sigma(0) \right\rvert \right)^2 \frac{l(0)}{2 a_0} \\
 \geq \frac{La_1}{2} e(0)^2 + \sqrt{2\kappa a_0} \left\lvert p(0) \right\rvert \big( [0,L ] \big).
\end{multline*}
Therefore, we obtain that $ \E(0) = \frac{La_1}{2} e(0)^2 + \sqrt{2\kappa a_0} \left\lvert p(0) \right\rvert \big( [0,L ] \big)$ and $ \left( 2 \kappa a_0  -  \sigma(0)^2 \right) l(0) = 0$.
\end{proof}

\medskip

\subsection{Regularity of the evolution}
Looking at the proof of Proposition \ref{cor:Sigma=Sigma_t} and defining, for all time $t \in [0,T]$, the function
$$
\Delta(t) = \left( \frac{\E(t)}{a_0} + \frac{\kappa L}{a_1} \right)^2 - \frac{2\kappa}{a_0} \left\lvert \big[ w(t) \big]^L_0 \right\rvert^2,
$$
one can check that 
$$\Delta (t) = \left( \frac{\sigma(t)^2}{2a_0} - \kappa \right)^2 \left( \frac{l(t)}{a_0} + \frac{L}{a_1} \right)^2 \geq 0 \quad \text{and} \quad l(t) = \frac{a_0}{2\kappa} \left( \frac{\E(t)}{a_0} - \frac{\kappa L}{a_1} + \sqrt{ \Delta(t) } \right) .$$
On the one hand, since 
$$\E(t) = \E(0) + \int_0^t \int_0^L \sigma(s) \dot w'(s,x) \, dx ds$$
and 
$$s \mapsto  \int_0^L \sigma(s) \dot w'(s,x) \, dx \quad \text{is integrable on } [0,T],$$
we infer that $\E \in AC([0,T];\R)$. On the other hand, since $w \in AC \big( [0,T]; C^0([0,L]) \big)$ and the product of absolutely continuous functions remains absolutely continuous, we infer that $\Delta \in AC([0,T];\R)$. In particular, we deduce that $l$, $\sigma$, $e$ and $p$ are continuous on the whole interval $[0,T]$. By non-negativity and monotonicity in time of $\mu$, we also infer that $\mu$ is continuous from $[0,T]$ to $\M([0,L])$. 

\medskip
Using the Energy Balance in Proposition \ref{prop:FG} together with the non-decreasing character of $l$, \eqref{eq:E(t)} and \eqref{eq:2}, we can actually show that $\sigma \in AC([0,T];\R)$. From this, we will deduce that $l$, $u$, $p$ and $\mu$ inherit the same regularity. This is a strong result because it is usually obtained a posteriori, once an Energy Balance of the type \eqref{eq:Energy Balance} is proved to be satisfied (see \cite{DMDSM}). Remarkably, here we do not rest on such an Energy Balance in order to prove the regularity of the quasi-static evolution. This is the content of the following proposition.
\begin{prop}
The mapping $(\sigma,e,l,\mu,p,u) : [0,T] \to \KK \times \R  \times  \R^+ \times  \M([0,L]) \times  \M([0,L]) \times  BV((0,L)) $ is absolutely continuous.
\end{prop}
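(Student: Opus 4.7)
The plan is to first prove $\sigma \in AC([0,T];\R)$; absolute continuity of the remaining components then follows by elementary algebra combined with the monotonicity of $\mu$.

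For $\sigma$, I begin with the algebraic identity obtained by cross-multiplying \eqref{eq:2}:
$$(\sigma(t) - \sigma(s))\Bigl(\tfrac{l(t)}{a_0} + \tfrac{L}{a_1}\Bigr) = \bigl([w(t)]^L_0 - [w(s)]^L_0\bigr) - \tfrac{\sigma(s)}{a_0}\bigl(l(t) - l(s)\bigr).$$
Using the lower bound $l(t)/a_0 + L/a_1 \geq L/a_1$ together with the stress bound $|\sigma| \leq \sqrt{2\kappa a_0}$, this gives for $s \leq t$
$$|\sigma(t) - \sigma(s)| \leq \tfrac{a_1}{L}\bigl|[w(t)]^L_0 - [w(s)]^L_0\bigr| + \tfrac{a_1\sqrt{2\kappa/a_0}}{L}\bigl(l(t) - l(s)\bigr),$$
so $\sigma$ is of bounded variation on $[0,T]$. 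Rewriting the Energy Balance \eqref{eq:E(t)} as $2\kappa\,l(t) = 2\E(t) - \sigma(t)[w(t)]^L_0$ and differentiating both sides in the sense of measures (using that $\sigma$ is continuous $BV$ and that $[w]^L_0,\E \in AC$) yields
$$2\kappa\, dl = \sigma(t)[\dot w(t)]^L_0\, dt - [w(t)]^L_0\, d\sigma,$$
while differentiating the identity $[w(t)]^L_0 = \sigma(t)(l(t)/a_0 + L/a_1)$ produces the coupling
$$[\dot w(t)]^L_0\, dt = \tfrac{\sigma(t)}{a_0}\, dl + \Bigl(\tfrac{l(t)}{a_0} + \tfrac{L}{a_1}\Bigr)\, d\sigma.$$
Multiplying the second identity by $\sigma(t)$, using $\sigma(l/a_0 + L/a_1) = [w]^L_0$, and substituting into the first produces the Griffith-type identity $(2\kappa a_0 - \sigma^2)\, dl = 0$, so the measure $dl$ is concentrated on the plastic set $P := \{\sigma^2 = 2\kappa a_0\}$. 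Since $\sigma^2$ is constant on $P$, the $BV$ product rule $d(\sigma^2) = 2\sigma\, d\sigma$ combined with $|\sigma| = \sqrt{2\kappa a_0} > 0$ on $P$ forces $d\sigma$ to vanish on $P$; the second coupling restricted to $P$ then reads $dl|_P = (a_0 [\dot w(t)]^L_0/\sigma(t))\, dt|_P$, which is absolutely continuous. Hence $dl^s = 0$, and injecting this into the singular part of the second coupling yields $(l/a_0 + L/a_1)\, d\sigma^s = 0$, whence $d\sigma^s = 0$. Thus $\sigma \in AC([0,T];\R)$ and $l \in AC([0,T];\R^+)$.

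The remaining quantities then inherit absolute continuity: $e = \sigma/a_1 \in AC$ by composition; monotonicity of $\mu$ as a positive-Radon-measure-valued map together with $AC$ of $\mu(t)([0,L]) = l(t)$ gives $\|\mu(t) - \mu(s)\|_{\M([0,L])} = l(t) - l(s)$ for $s \leq t$, hence $\mu \in AC([0,T];\M([0,L]))$; the product $p = \sigma\mu/a_0 \in AC([0,T];\M([0,L]))$ via the standard product estimate; and $u \in AC([0,T];BV((0,L)))$ follows from the decomposition $Du(t) = e(t)\LL^1\res(0,L) + p(t)\res(0,L)$ together with the relaxed boundary condition \eqref{eq:relaxed boundary condition} and the equivalent $BV$-norm \eqref{eq:norme BV}.

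The main obstacle is the $AC$ of $\sigma$: the direct modulus estimate only yields $BV$ regularity, and upgrading to $AC$ requires the measure-theoretic analysis above, critically exploiting the stress constraint $\sigma^2 \leq 2\kappa a_0$ together with the monotonicity $dl \geq 0$. This is noteworthy because, as emphasized in the paragraph preceding the statement, such regularity is typically obtained a posteriori from a dissipation identity of the form $\|\dot p\|_\M \in L^1$, whereas here it is extracted directly from the simpler Energy Balance \eqref{eq:E(t)}.
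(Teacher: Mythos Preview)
Your argument is correct and takes a genuinely different route from the paper. The paper establishes $\sigma\in AC([0,T];\R)$ directly, without first proving $BV$ regularity: passing to the limit $\e\searrow0$ in a suitable combination of the energy identities yields the Dal Maso--DeSimone--Mora--type quadratic inequality
\[
\frac{L}{2a_1}\bigl(\sigma(t)-\sigma(s)\bigr)^2 \;\le\; \int_s^t\bigl(\sigma(r)-\sigma(s)\bigr)\bigl[\dot w(r)\bigr]^L_0\,dr,
\]
the two discarded remainder terms being non-negative precisely thanks to the monotonicity $l(t)\ge l(s)$ and the stress constraint $2\kappa a_0\ge\sigma(s)^2$; a short bootstrapping on this inequality then gives $|\sigma(t)-\sigma(s)|\le\tfrac{2a_1}{L}\int_s^t\bigl|[\dot w]^L_0\bigr|\,dr$, hence absolute continuity. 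Your approach instead first secures $\sigma\in BV$ from the elementary modulus estimate, then extracts the Griffith identity $(2\kappa a_0-\sigma^2)\,dl=0$ already at the level of signed measures and uses it to kill the singular parts of $dl$ and $d\sigma$ in turn. This makes the mechanism very transparent and produces the evolution law of Theorem~\ref{thm:DAMAGE} essentially as a by-product; the paper's estimate, by contrast, is closer in spirit to the techniques that survive in higher dimension. One step of yours deserves an explicit word of justification: the assertion that $d\sigma$ vanishes on $P=\{|\sigma|=\sqrt{2\kappa a_0}\}$ does not follow from ``$\sigma^2$ is constant on $P$'' alone, but from the standard fact that the variation measure of a \emph{continuous} $BV$ function gives zero mass to each of its level sets---applied here to the two level sets $\{\sigma=\pm\sqrt{2\kappa a_0}\}$.
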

\begin{proof}
One can check that for all $0 \leq s \leq t \leq T$,
\begin{multline}\label{eq:Imitation DSDMMORA}
\frac{L}{2 a_1} \left( \sigma(t) - \sigma(s) \right)^2 = \int_s^t (\sigma(r)  - \sigma(s) ) \big[ \dot w(r)  \big]^L_0 \, dr \\
- \frac{ l(s) }{2 a_0} (\sigma(t) - \sigma(s))^2   - \frac{ (l(t) - l(s))}{2 a_0} \left( \sigma(t) ^2 - 2 \sigma(t) \sigma(s) + 2 \kappa a_0 \right). 
\end{multline}
Indeed, the convergences in Proposition \ref{cor:Sigma=Sigma_t} together with the homogeneity in space of $\sigma$ and $\sigma_\e$ imply that
\begin{multline*}
\frac{L}{2a_1} (\sigma(t) - \sigma(s) )^2 = L \left( \frac{\sigma(t) e(t)}{2} - \frac{ \sigma(s) e(s)}{2} - \sigma(s) (e(t) - e(s)) \right) \\
= \lim_{\e \searrow 0} \int_0^L \left( \frac{\sigma_\e(t) e_\e(t)}{2} - \frac{ \sigma_\e(s) e_\e(s)}{2} - \sigma_\e(s) (e_\e(t) - e_\e(s)) \right) \, dx .
\end{multline*}
Using \eqref{seq:def e}, \eqref{seq:e_eps}, \eqref{seq:p_eps} and writing $l_\e := \int_0^L \frac{1 - \Theta_\e}{\e} \, dx$, we get that
\begin{align*}
\frac{L}{2a_1} (\sigma(t) - \sigma(s) )^2 
& = \lim_{\e \searrow 0}  \biggl( \E_\e(t) -\int_0^L \frac{\sigma_\e(t) p_\e(t)}{2} \, dx - \kappa l_\e(t)  - \E_\e(s) + \int_0^L  \frac{ \sigma_\e(s) p_\e(s)}{2} \, dx  + \kappa l_\e(s) \\ 
& \hspace{4cm} - \sigma_\e(s) \int_s^t \big[ \dot w(r) \big]^L_0 \, dr + \sigma_\e(s) \int_0^L (p_\e(t) - p_\e(s)) \, dx \biggr) \\
& = \E(t) - \E(s) - \kappa (l(t) - l(s))  - \sigma(s) \int_s^t \big[ \dot w(r) \big]^L_0 \, dr \\
& \hspace{4cm} - \frac{\sigma(t)^2}{2 a_0} l(t) + \frac{ \sigma(s)^2}{2a_0} l(s) + \sigma(s) \left( \frac{ \sigma(t) l(t)}{a_0} - \frac{\sigma(s) l(s)}{a_0} \right)
\end{align*}
so that \eqref{eq:E(t)} entails \eqref{eq:Imitation DSDMMORA}. In particular, since $2 \kappa a_0 \geq \sigma(s)^2$, we obtain
\begin{multline*}
\frac{L}{2a_1} \left\lvert \sigma(t) - \sigma(s) \right\rvert^2 \leq \int_s^t (\sigma(r)  - \sigma(s) ) \big[ \dot w(r) \big]^L_0 \, dr \\
\leq   \frac{L}{2a_1} \int_s^t \left\lvert \sigma(r) - \sigma(s) \right\rvert^2 \, dr + \frac{a_1}{2L} \int_0^T  \left\lvert \big[ \dot w (r) \big]^L_0 \right\rvert^2 \, dr 
\end{multline*}
where we also used Young's inequality. Applying Gronwall's Lemma leads to
$$
\left\lvert \sigma(t) - \sigma(s) \right\rvert \leq  \frac{a_1}{L} \norme{ \big[ \dot w \big]^L_0 }_{L^2([0,T])} \exp \left( \frac{t-s}{2} \right)
$$
for all $0 \leq s \leq t \leq T$, thus proving the absolute continuity of $\sigma$ and $e= \sigma a^{-1}_1 $ from $ [0,T]$ to $ \R$. In particular, \eqref{eq:E(t)} entails that 
$$ l = \frac{1}{\kappa} \left( \E - \frac{\sigma}{2} \big[ w \big]^L_0 \right) \in AC([0,T];\R).$$
Since $\mu : [0,T] \to \M([0,L];\R^+)$ is non-decreasing in time, \eqref{eq:link mu l} implies  
$$
0 \leq \left( \mu(t) - \mu(s) \right) \big( [0,L] \big) = l(t) - l(s)
$$
for all $0 \leq s \leq t \leq T$, so that 
$$\mu \in AC \big([0,T];\M([0,L]) \big) \quad \text{and} \quad p = \frac{\sigma}{a_0} \mu \in AC \big([0,T];\M([0,L]) \big).$$
Finally, the Trace Theorem in $BV((0,L))$, \eqref{eq:u} and \eqref{eq:relaxed boundary condition} imply that for all $0 \leq s \leq t \leq T$,
$$
\norme{u(t) - u(s)}_{BV((0,L))} \leq  L \left\lvert e(t) - e(s) \right\rvert + \left\lvert p(t) - p(s) \right\rvert ([0,L])  
 + \left\lvert w(t)-w(s) \right\rvert (L) + \left\lvert w(t) - w(s) \right\rvert (0), 
$$
thus proving the absolute continuity of $u$ from $[0,T]$ to $BV((0,L))$ as well.
\end{proof}

At this point, we have identified good candidates for the limit evolution:
$$(u,e,p,\sigma, \mu) : [0,T] \to BV((0,L)) \times \R \times \M([0,L]) \times \KK \times \M([0,L]),$$
which are all absolutely continuous on $[0,T]$ and satisfy the following assertions for all $t \in [0,T]$:
\begin{enumerate}[label=\roman*., leftmargin=* ,parsep=0.1cm,topsep=0.2cm]
\item Additive Decomposition: \quad $Du(t) = e(t)\LL^1 \res (0,L) + p(t) \res (0,L)$ in $\M((0,L))$
\item Relaxed Dirichlet Condition: \quad $p(t) \res \{ 0,L \} = \big(w(t) - u(t) \big) \big( \delta_L - \delta_0 \big)$ in $\M(\{0,L \})$ 
\item Constitutive Equation: \quad $\sigma(t) = a_1 e(t) $								
\item Equilibrium Equation: \quad $\sigma'(t) = 0$ in $H^{-1}((0,L))$
\item Stress Constraint: \quad $\sigma(t) \in \KK$.
\end{enumerate} 
The absolute continuity of $(u,\sigma,p,\mu,l)$ guarantees that $(u,\sigma)$ describes a quasi-static damage evolution, whose internal variable is the effective compliance (inverse effective rigidity) 
$$
c : t \in [0,T] \mapsto \frac{\mu(t)}{a_0} + \frac{1}{a_1} \LL^1 \res (0,L) \in \M([0,L];\R^+)
$$
satisfying the Constitutive Equation and Griffith Evolution Law stated in Theorem \ref{thm:DAMAGE}.

\begin{proof}[Proof of Theorem \ref{thm:DAMAGE}:]
Using \eqref{eq:E(t)} and \eqref{eq:2}, one can check that for $\LL^1$-a.e. $t \in [0,T]$ the following quantities are well defined and satisfy 
$$
 \big[ \dot w(t) \big]^L_0 = \dot \sigma (t) \left( \frac{l(t)}{a_0} + \frac{L}{a_1} \right) + \sigma(t) \frac{ \dot l(t)}{a_0}
$$
and 
$$
\dot \E(t) = \sigma(t) \big[ \dot w(t) \big]^L_0 = \frac{ \dot \sigma(t) \big[w(t)\big]^L_0 }{2} + \frac{\sigma(t) \big[ \dot w(t) \big]^L_0}{2} + \kappa \dot l(t).
$$
Hence 
$${\displaystyle \kappa \dot l(t) + \frac{\dot \sigma(t) \big[w(t)\big]^L_0}{2} = \frac{ \sigma(t) \big[ \dot w(t) \big]^L_0}{2} =  \frac{\dot \sigma(t) \big[w(t)\big]^L_0}{2} + \frac{\sigma(t)^2 \dot l(t)}{2 a_0}},$$
entailing that
$$
\dot l(t) \left( \kappa - \frac{\sigma(t)^2}{2 a_0} \right) = 0.
$$
Besides, using that $\mu$ is non-decreasing in time together with \cite[Theorem 7.1, Formula (7.4)]{DMDSM} and \eqref{eq:link mu l} ensures that 
\begin{equation}\label{eq:dot mu dot l}
 \dot \mu(t)  \big([0,L]) =   \lim_{h \searrow 0}  \frac{\mu (t+h)\big( [0,L] \big) - \mu(t)\big( [0,L] \big)}{h}    = \lim_{h \searrow 0}  \frac{l (t+h)- l (t)}{h}  = \dot l(t)
\end{equation}
for $\LL^1$-a.e. $t \in [0,T]$. Thus, by non-negativity of the Radon measure $\dot \mu(t)$, we infer that 
$$\dot \mu(t) \left( 2 \kappa a_0 - \sigma(t)^2 \right) = 0 \quad \text{in } \M([0,L];\R^+) \quad \text{for } \LL^1 \text{-a.e. } t \in [0,T]. $$
\end{proof}

\begin{rem} The effective limit model obtained here is a different type of damage model where the dissipative phenomena is described by means of an internal variable, the effective compliance $c : [0,T] \to \M([0,L];\R^+)$, whose non-decreasing character in time accounts for the irreversibility of damage. This is a threshold stress model, based on the conjecture that damage propagates if and only if the stress saturates the constraint.
\noindent
\begin{itemize}
\item Formally inverting the compliance $c(t)$, the Constitutive Equation allows us to interpret $\frac12 \sigma(t) Du(t) = \frac12 c(t)^{-1}  Du(t)^2$ as the stored (elastic) energy density of the effective damaged medium at time $t \in [0,T]$. In other words, one can interpret 
$$\mathcal Q(t) := \frac12 \sigma(t) Du(t) ((0,L)) = \frac{\sigma(t)^2}{2} \left( \frac{\mu(t)((0,L))}{a_0} + \frac{L}{a_1} \right)$$
as the elastic energy in the body at time $t$. Therefore, since 
$$\E(t) = \frac{\sigma(t)^2}{2} \left( \frac{l(t)}{a_0} + \frac{L}{a_1} \right) + \kappa l(t),$$
we can write the following energy balance  
$$
\mathcal Q(t) +  \int_0^t \frac{ d }{ ds } \left(  \frac{\sigma(s) p(s)(\{ 0,L \})}{2} + \kappa  l(s)\right)  \, ds = \mathcal Q(0) +  \int_0^t  \sigma(s) \big[ \dot w(s) \big]^L_0 \, ds
$$
where the left hand side is the sum of the elastic energy and a dissipative cost due to damage. 
\item As explained above, the Griffith type Evolution Law states that damage can only grow when the stress $\sigma$ saturates the constraint. This threshold condition generalizes the Initial Constituve Law \eqref{eq:Constitutive Equation Initial Time} to the quasi-static setting. As will be explained in the Section \ref{section:Energy Balance}, this Constitutive Law \eqref{eq:Constitutive Equation Initial Time} a priori does not propagate to subsequent times through the evolution process, unless the prescribed boundary condition satisfies \eqref{eq:CNS w}, corresponding to the case of perfect plasticity. In this case, one can check that $\big( 2 \kappa a_0 - \sigma(t)^2 \big) l(t) = 0$ for all $t \in [0,T]$.
\end{itemize}
\end{rem}
Motivated by the static analysis led in \cite{BIR} where the authors have shown how brittle damage can lead to Hencky perfect plasticity, it is natural to extend this analysis to quasi-static evolutions and inquire whether $(u,e,p,\sigma)$ is of perfect plasticity type or not. Following \cite[Definition 4.2]{DMDSM}, in order for this quasi-static evolution to be of perfect plasticity type, it only remains to prove the Energy Balance:
\begin{equation}\label{eq:Energy Balance}
\frac{La_1}{2} e(t)^2 + \sqrt{2 \kappa a_0} \, \int_0^t \left\lvert \dot p(s) \right\rvert([0,L]) \, ds  = \frac{La_1}{2}  e(0)^2  + \int_0^t \int_0^L \sigma(s)    \dot w'(s)(x) \, dx ds  
\end{equation}
for all time $t \in [0,T]$, where we used that $\int_0^t \left\lvert \dot p(s) \right\rvert([0,L]) \, ds = \mathcal V(p;0,t)$ according to \cite[Theorem 7.1]{DMDSM}. As $u$, $\sigma$, $p$, $\mu$ and $l$ are all absolutely continuous on $[0,T]$, we have the following Proposition:
\begin{prop}
For all $t \in [0,T]$,
\begin{equation}\label{eq:Fake Energy Balance}
\frac{La_1}{2} e(t)^2 + \int_0^t \sigma(s) \dot p(s)([0,L]) \, ds = \frac{L a_1}{2} e(0)^2 + \int_0^t \sigma(s) \big[ \dot w(s) \big]^L_0 \, ds.
\end{equation}
\end{prop}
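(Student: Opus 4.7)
The plan is to reduce \eqref{eq:Fake Energy Balance} to a scalar computation using the limit Energy Balance \eqref{eq:E(t)} and then to close the loop with the Griffith Evolution Law $\dot l(s)(2\kappa a_0 - \sigma(s)^2) = 0$ a.e., which was established in the proof of Theorem \ref{thm:DAMAGE}. Since $\sigma(t) = a_1 e(t)$, the expression of $\E(t)$ in \eqref{eq:E(t)} rewrites as
$$\E(t) = \frac{L a_1}{2}e(t)^2 + \frac{\sigma(t)^2 l(t)}{2 a_0} + \kappa l(t),$$
so combining this with $\E(t) - \E(0) = \int_0^t \sigma(s)[\dot w(s)]^L_0 \, ds$ yields
$$\frac{La_1}{2}e(t)^2 - \frac{La_1}{2}e(0)^2 = \int_0^t \sigma(s)\big[\dot w(s)\big]^L_0 \, ds - \frac{\sigma(t)^2 l(t) - \sigma(0)^2 l(0)}{2 a_0} - \kappa\bigl(l(t) - l(0)\bigr).$$
Consequently, proving \eqref{eq:Fake Energy Balance} amounts to showing the identity
$$\int_0^t \sigma(s)\, \dot p(s)\big([0,L]\big) \, ds = \frac{\sigma(t)^2 l(t) - \sigma(0)^2 l(0)}{2 a_0} + \kappa \bigl( l(t) - l(0) \bigr). \qquad (\ast)$$

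The second step is to make $\dot p(s)([0,L])$ explicit. Since $p,\sigma,l$ are absolutely continuous and $p(s)([0,L]) = \sigma(s) l(s)/a_0$ by \eqref{eq:link mu l}, pairing the weak-$\ast$ derivative $\dot p(s)$ against the constant test function $1$ on $[0,L]$ (as already done for $\mu$ in \eqref{eq:dot mu dot l} via \cite[Theorem 7.1]{DMDSM}) gives
$$\dot p(s)\big([0,L]\big) = \frac{d}{ds}\!\left[\frac{\sigma(s) l(s)}{a_0}\right] = \frac{\dot\sigma(s) l(s) + \sigma(s)\dot l(s)}{a_0} \quad \text{for $\LL^1$-a.e. } s \in [0,T].$$
Substituting this into the left-hand side of $(\ast)$ and integrating by parts the term $\int_0^t \sigma(s)\dot\sigma(s) l(s)/a_0 \, ds = \frac{1}{2 a_0} \int_0^t \tfrac{d}{ds}[\sigma(s)^2]\, l(s)\, ds$ (which is permitted as both $\sigma^2$ and $l$ are absolutely continuous), one obtains
$$\int_0^t \sigma(s)\,\dot p(s)\big([0,L]\big) \, ds = \frac{\sigma(t)^2 l(t) - \sigma(0)^2 l(0)}{2 a_0} + \frac{1}{2 a_0} \int_0^t \sigma(s)^2 \dot l(s) \, ds.$$

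The final step invokes the Griffith Evolution Law, which yields $\sigma(s)^2 \dot l(s) = 2\kappa a_0\, \dot l(s)$ for $\LL^1$-a.e. $s \in [0,T]$, so that
$$\frac{1}{2 a_0} \int_0^t \sigma(s)^2\, \dot l(s) \, ds = \kappa \int_0^t \dot l(s)\, ds = \kappa\bigl( l(t) - l(0) \bigr),$$
which is precisely the remaining term in $(\ast)$.

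The only non-routine point is the identification $\dot p(s)([0,L]) = \tfrac{d}{ds}[p(s)([0,L])]$, which is also the one place where the absolute continuity of $p$ in $\M([0,L])$ (as opposed to just $\M((0,L))$) is genuinely used, since mass can be exchanged with the boundary through the relaxed Dirichlet condition. Once this commutation is granted, the remainder of the argument is a direct manipulation of absolutely continuous scalar functions, whose key algebraic cancellation is provided by the Griffith Evolution Law of Theorem \ref{thm:DAMAGE}.
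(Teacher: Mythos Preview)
Your proof is correct and follows essentially the same route as the paper: both rewrite $\E(t)$ via \eqref{eq:E(t)} and \eqref{eq:2} as $\tfrac{La_1}{2}e(t)^2+\tfrac{\sigma(t)^2 l(t)}{2a_0}+\kappa l(t)$, identify $\dot p(s)([0,L])$ with $\tfrac{d}{ds}\big(\sigma(s)l(s)/a_0\big)$, and close using the Griffith Evolution Law $\dot l(2\kappa a_0-\sigma^2)=0$. Your organization is arguably a touch more direct, since your symmetric treatment of $\E(t)-\E(0)$ bypasses the paper's detour through \eqref{eq:Constitutive Equation Initial Time} and \eqref{eq:E 0}, but the ingredients are identical.
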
 
\begin{proof}
Indeed, for all $t \in [0,T]$, \eqref{eq:E(t)} and \eqref{eq:2} entail that
\begin{multline*}
\E(t) = \frac{La_1}{2} e(t)^2 + l(t) \left( \frac{\sigma(t)^2}{2a_0} + \kappa \right) \\
 =  \frac{La_1}{2} e(t)^2 + l(0) \left( \frac{\sigma(0)^2}{2a_0} + \kappa \right) + \int_0^t \frac{\rm d}{\rm{ds}} \left( l \left( \frac{\sigma^2}{2a_0} + \kappa \right) \right)(s) \, ds.
\end{multline*}
Using that $ \dot{ \wideparen{ l \sigma^2  }} = \dot l  \sigma^2 + 2 l \sigma \dot \sigma $ together with \eqref{eq:Constitutive Equation Initial Time} and Theorem \ref{thm:DAMAGE}, we infer that
$$
\E(t) = \frac{La_1}{2} e(t)^2  + \sqrt{2 \kappa a_0} \left\lvert \sigma(0) \right\rvert \frac{l(0)}{a_0}  + \int_0^t  \left( \frac{\dot l}{a_0} \sqrt{2 \kappa a_0} \left\lvert \sigma \right\rvert + \frac{l \sigma \dot \sigma}{a_0} \right)  \, ds.
$$
Thus \eqref{eq:link mu l}, Theorem \ref{thm:DAMAGE} and \eqref{eq:dot mu dot l} entail that 
\begin{multline*}
\E(t) = \frac{La_1}{2} e(t)^2  + \sqrt{2 \kappa a_0} \left\lvert p(0) \right\rvert ([0,L])  + \int_0^t  \sigma \left( \frac{\dot l \sigma + l  \dot \sigma}{a_0} \right)  \, ds \\
 = \frac{La_1}{2} e(t)^2  + \sqrt{2 \kappa a_0} \left\lvert p(0) \right\rvert ([0,L])  + \int_0^t  \sigma(s) \dot p(s) ([0,L])  \, ds.
\end{multline*}
Then, \eqref{eq:E(t)}, \eqref{eq:Initial Time} and \eqref{eq:Constitutive Equation Initial Time} complete the proof of \eqref{eq:Fake Energy Balance}.
\end{proof}
Therefore, due to the homogeneity in space of $\sigma$ together with the stress constraint, \eqref{eq:Fake Energy Balance} ensures that the upper bound inequality of \eqref{eq:Energy Balance} is always satisfied:
\begin{prop}\label{cor:Upper Bound} For all $t \in [0,T]$,
$$\frac{La_1}{2} e(t)^2 + \sqrt{2 \kappa a_0} \, \int_0^t \left\lvert \dot p(s) \right\rvert([0,L]) \, ds   \geq  \frac{La_1}{2}  e(0)^2  + \int_0^t \int_0^L \sigma(s)    \dot w'(s)(x) \, dx ds. $$ 
\end{prop}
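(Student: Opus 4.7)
The plan is to derive the inequality directly from the identity \eqref{eq:Fake Energy Balance} established in the preceding proposition, by estimating the cross term $\int_0^t \sigma(s)\dot p(s)([0,L])\,ds$ from above using the stress constraint \eqref{eq:sigma in KK} and the elementary comparison between the total mass of a signed measure and its total variation.

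More precisely, I first rewrite the right-hand side of the target inequality so that it matches the right-hand side of \eqref{eq:Fake Energy Balance}. Since $w(s) \in H^1(\mathbb R)$ for all $s$, the fundamental theorem of calculus in Sobolev spaces gives $\int_0^L \dot w'(s)(x)\,dx = \big[\dot w(s)\big]^L_0$ for $\LL^1$-a.e.\ $s \in [0,T]$, and by the space homogeneity of $\sigma(s)$ recalled in Proposition \ref{cor:Sigma=Sigma_t},
\[
\int_0^t \int_0^L \sigma(s)\, \dot w'(s)(x)\,dx\,ds \;=\; \int_0^t \sigma(s)\,\big[\dot w(s)\big]^L_0\,ds.
\]
Thus \eqref{eq:Fake Energy Balance} reduces the proposition to proving
\[
\int_0^t \sigma(s)\,\dot p(s)([0,L])\,ds \;\leq\; \sqrt{2\kappa a_0}\int_0^t \bigl\lvert \dot p(s)\bigr\rvert([0,L])\,ds.
\]

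To get this bound, I apply the stress constraint $\sigma(s) \in \KK = [-\sqrt{2\kappa a_0},\sqrt{2\kappa a_0}]$ pointwise in $s$, together with the obvious inequality $\bigl\lvert \dot p(s)([0,L])\bigr\rvert \leq \bigl\lvert \dot p(s)\bigr\rvert([0,L])$, valid for any signed Radon measure: indeed $\dot p(s)([0,L]) = \dot p(s)^+([0,L]) - \dot p(s)^-([0,L])$ while $|\dot p(s)|([0,L]) = \dot p(s)^+([0,L]) + \dot p(s)^-([0,L])$. Combining these two estimates yields
\[
\sigma(s)\,\dot p(s)([0,L]) \;\leq\; \sqrt{2\kappa a_0}\,\bigl\lvert \dot p(s)\bigr\rvert([0,L])\qquad\text{for $\LL^1$-a.e.\ $s \in [0,t]$,}
\]
and integration over $[0,t]$ delivers the desired inequality.

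There is no real obstacle: the absolute continuity of $p$ on $[0,T]$ with values in $\M([0,L])$ ensures that $\dot p(s)$ exists for $\LL^1$-a.e.\ $s$ as a weak-$*$ limit in the dual of $C_0([0,L])$ and that $s \mapsto |\dot p(s)|([0,L])$ is integrable, so all the integrals above are well-defined and the manipulation is justified. The only subtlety to keep in mind is that the upper bound makes essential use of the stress constraint \eqref{eq:sigma in KK} proved in Proposition \ref{cor:Sigma=Sigma_t}; equality in the Energy Balance \eqref{eq:Energy Balance} would further require that $\sigma(s)$ and $\dot p(s)$ are ``aligned'' in the appropriate Hill sense, which is precisely the point where the necessary and sufficient condition \eqref{eq:CNS w} on $w$ will enter in Section \ref{section:Energy Balance}.
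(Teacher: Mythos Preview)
Your proof is correct and follows precisely the approach indicated in the paper: use \eqref{eq:Fake Energy Balance} together with the spatial homogeneity of $\sigma$ (to rewrite the work term) and the stress constraint $\sigma(s)\in\KK$ (to bound $\sigma(s)\dot p(s)([0,L])$ by $\sqrt{2\kappa a_0}\,|\dot p(s)|([0,L])$). The paper merely states this in one sentence; you have spelled out the details faithfully.
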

Having all the previous results in mind, one would naturally be tempted to intuit the validity of the Energy Balance \eqref{eq:Energy Balance}, hence proving that the quasi-static damage evolution $(u,e,p,\sigma)$ is indeed one of perfect plasticity. Surprisingly, the interplay between relaxation and irreversibility of the damage is not stable through time evolutions. Indeed, depending on the choice of the prescribed Dirichlet boundary condition $w \in AC([0,T];H^1(\R))$, the effective quasi-static damage evolution may not be of perfect plasticity type, as illustrated in the example of Figure \ref{fig:3}. Understanding on which condition the effective quasi-static evolution is of perfect plasticity type is the content of the next section.

\section{Energy Balance}\label{section:Energy Balance}
We can prove that the Energy Balance \eqref{eq:Energy Balance} is satisfied if and only if $\sigma$ saturates the constraint once $l$ is non-zero, until the end of the process (see Figure \ref{fig:2}).
\begin{lem}
Let ${\displaystyle t_0 = \sup \, \{ t \in [0,T] \, : \, \mu(t) \big( [0,L] \big) = 0 \} }$. The Energy Balance \eqref{eq:Energy Balance} is satisfied if and only if $\sigma$ saturates the constraint during the whole time interval $[t_0,T]$ , {\it i.e.}
\begin{equation}\label{eq:Naive CNS}
\left\lvert \sigma(t) \right\rvert = \sqrt{2 \kappa a_0} \quad \text{for all } t \in [t_0,T].
\end{equation}
\end{lem}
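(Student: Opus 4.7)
The plan is to first reduce the Energy Balance to a pointwise in time identity. Subtracting the Fake Energy Balance \eqref{eq:Fake Energy Balance} from the desired Energy Balance \eqref{eq:Energy Balance} shows that the latter is equivalent, for all $t \in [0,T]$, to
$$
\int_0^t \Bigl[\sqrt{2\kappa a_0}\,|\dot p(s)|\bigl([0,L]\bigr) - \sigma(s)\,\dot p(s)\bigl([0,L]\bigr)\Bigr]\,ds = 0.
$$
The stress constraint $\sigma(s) \in \KK$ yields the chain $\sigma(s)\dot p(s)([0,L]) \leq |\sigma(s)|\,|\dot p(s)|([0,L]) \leq \sqrt{2\kappa a_0}\,|\dot p(s)|([0,L])$, so the integrand above is non-negative, and \eqref{eq:Energy Balance} turns out to be equivalent to the pointwise identity
$$
\sqrt{2\kappa a_0}\,|\dot p(s)|\bigl([0,L]\bigr) = \sigma(s)\,\dot p(s)\bigl([0,L]\bigr) \qquad \text{for a.e. } s \in [0,T].
$$

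For the direction $(\Leftarrow)$, on $[0,t_0]$ the non-decreasing character and continuity of $\mu$ together with $l(t_0)=0$ force $\mu\equiv 0$, hence $p\equiv 0$ and both sides of the identity vanish. On $[t_0,T]$, continuity of $\sigma$ on the connected interval $[t_0,T]$ combined with $|\sigma|\equiv\sqrt{2\kappa a_0}$ forces $\sigma$ to be a constant $\sigma_0 \in \{\pm\sqrt{2\kappa a_0}\}$; then $\dot\sigma\equiv 0$ and differentiating $p=\frac{\sigma}{a_0}\mu$ gives $\dot p = \frac{\sigma_0}{a_0}\dot\mu$, so $|\dot p|=\frac{\sqrt{2\kappa a_0}}{a_0}\dot\mu$ (as $\dot\mu\geq 0$), and both sides of the identity evaluate to $2\kappa\,\dot l(s)$.

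For the harder direction $(\Rightarrow)$, I would argue by contradiction: suppose $|\sigma(t^*)|<\sqrt{2\kappa a_0}$ for some $t^*\in[t_0,T]$, and let $J$ be the connected component of the relatively open set $B:=\{s\in[t_0,T]: |\sigma(s)|<\sqrt{2\kappa a_0}\}$ containing $t^*$. The Griffith Evolution Law (Theorem \ref{thm:DAMAGE}) gives $\dot\mu\equiv 0$ on $J$, and since $\mu \in AC([0,T];\M([0,L]))$ this promotes to $\mu$ being constant on $J$, with mass $l^*$. If $t_0\in J$, then $J$ contains a right neighbourhood $[t_0,t_0+\delta)$ on which $\mu\equiv\mu(t_0)=0$, so $l\equiv 0$ there, contradicting $t_0=\sup\{l=0\}$. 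Otherwise the left endpoint $a$ of $J$ satisfies $a\geq t_0$, $a\notin J$, and continuity of $\mu$ gives $l(a)=l^*$. The sub-case $l^*=0$ reduces to the previous contradiction via non-decreasingness of $l$. In the sub-case $l^*>0$, the pointwise identity together with $\dot\mu\equiv 0$ becomes $l^*\bigl(\sqrt{2\kappa a_0}\,|\dot\sigma(s)|-\sigma(s)\dot\sigma(s)\bigr)=0$ a.e. on $J$; since $|\sigma|<\sqrt{2\kappa a_0}$ on $J$, this forces $\dot\sigma\equiv 0$ there, so $\sigma$ is a constant $\sigma^*$ on $J$ with $|\sigma^*|<\sqrt{2\kappa a_0}$. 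Continuity of $\sigma$ gives $\sigma(a)=\sigma^*$, so $a\in B$, which contradicts the maximality of $J$.

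The main obstacle will be the case analysis in the contradiction argument, in particular extracting $\dot\sigma\equiv 0$ on $J$ from the pointwise identity in the non-degenerate sub-case $l^*>0$ and making sure the behaviour of $\mu$ and $\sigma$ at the endpoints of $J$ is handled consistently with the definition of $t_0$; a subsidiary technical point is using the absolute continuity $\mu \in AC([0,T];\M([0,L]))$ to upgrade the almost everywhere vanishing of $\dot\mu$ on $J$ to the actual constancy of $\mu$ there.
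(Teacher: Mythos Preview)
Your proposal is correct and follows essentially the same route as the paper. Both reduce \eqref{eq:Energy Balance} to the a.e.\ Flow Rule via \eqref{eq:Fake Energy Balance}, handle $(\Leftarrow)$ identically by noting $\sigma$ is a constant $\pm\sqrt{2\kappa a_0}$ on $[t_0,T]$ and $p\equiv 0$ before, and for $(\Rightarrow)$ argue by contradiction on a maximal subinterval where $|\sigma|<\sqrt{2\kappa a_0}$: the Griffith law makes $\mu$ constant there, and the Flow Rule then forces $\dot\sigma=0$ a.e., so $\sigma$ is constant, contradicting maximality at the left endpoint. The only cosmetic difference is that the paper restricts its interval $I$ to $(t_0,T]$ (so that $l>0$ is automatic) and phrases the contradiction contrapositively (there must exist a set $E$ of positive measure with $\dot\sigma\neq 0$, on which the Flow Rule then fails), whereas you keep $J\subset[t_0,T]$ and split explicitly into the cases $t_0\in J$, $l^*=0$, $l^*>0$; these amount to the same argument.
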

\begin{proof}
Let us heuristically explain the argument. On the one hand, let us assume that \eqref{eq:Naive CNS} holds. Since $\mu(t) = 0 = p(t)$ in $\M \big( [0,L] \big)$ for all $t \in [0,t_0]$ and $\left\lvert \sigma(t) \right\rvert = \sqrt{2 \kappa a_0}$ for all $t \in [t_0,T]$, the Flow-Rule holds:
\begin{equation}\label{eq:FlowRuleBIS}
\sigma(t) \dot p(t) \big( [0,L] \big) = \sqrt{2 \kappa a_0} \left\lvert \dot p(t) \right\rvert \big( [0,L] \big) \quad \text{for } \LL^1 \text{-a.e. } t \in [0,T].
\end{equation}
This is immediate during the time interval $[0,t_0]$, while during the time interval $[t_0,T]$, by continuity we infer that $\sigma$ is constant and either $\sigma \equiv \sqrt{2 \kappa a_0}$ or $\sigma \equiv - \sqrt{2 \kappa a_0}$. Especially, since $p = \frac{\sigma}{a_0} \mu$ and $\mu$ is non-decreasing in time, we infer that $ p $ is either non-decreasing or non-increasing in time on $[t_0,T]$, according to the sign of $\sigma$. Hence, $\sigma \dot p = \frac{\sigma^2}{a_0} \dot \mu = \sqrt{2 \kappa a_0} \left\lvert \dot p \right\rvert$ in $\M \big( [0,L]; \R^+ \big)$. Therefore, the validity of the Energy Balance \eqref{eq:Energy Balance} follows from \eqref{eq:Fake Energy Balance} and \eqref{eq:FlowRuleBIS}.

On the other hand, \eqref{eq:Naive CNS} is also a necessary condition. Let us assume that the Energy Balance \eqref{eq:Energy Balance} is satisfied. From \eqref{eq:Fake Energy Balance}, we infer that 
$$
\int_0^t \sigma(s) \dot p(s)\big([0,L] \big) \, ds = \sqrt{2 \kappa a_0} \int_0^t \left\lvert \dot p(s) \right\rvert \big([0,L] \big) \, ds
$$ 
for all $t \in [0,T]$. As $ \sigma \dot p\big([0,L] \big) \leq \sqrt{2 \kappa a_0}  \left\lvert \dot p \right\rvert \big([0,L] \big)$ always holds, we deduce that equality \eqref{eq:FlowRuleBIS} is satisfied $\LL^1$-a.e. on $[0,T]$. In particular, \eqref{eq:Naive CNS} must hold, otherwise the Flow-Rule will not be satisfied during a non $\LL^1$-negligible set of times in $[t_0,T]$. Indeed, if $\left\lvert \sigma(t) \right\rvert < \sqrt{2 \kappa a_0}$ for some $t \in (t_0,T)$, considering the maximal time interval $t \in I \subset (t_0,T]$ during which $\sigma$ never saturates the constraint, we get by continuity of $\sigma$ and Theorem \ref{thm:DAMAGE} that ${\rm Int} (I)$ is a non empty interval and $\mu$ is a constant non-zero measure on $I$. Moreover, there exists $E \subset I$ such that $\LL^1(E) >0$ and $\dot \sigma \neq 0$ on $E$. If such was not the case, $\sigma$ would be constant on $I$, which is impossible by maximality of the interval. Thus, one simultaneously has $\dot p \big( [0,L] \big) = \dot \sigma / a_0 l \neq 0$ and $\left\lvert \sigma \right\rvert < \sqrt{2 \kappa a_0}$ on $E$, so that $\left\lvert \sigma \right\rvert \left\lvert \dot \sigma /a_0 \right\rvert   l < \sqrt{2 \kappa a_0}  \left\lvert \dot \sigma / a_0 \right\rvert l$ which is in contradiction with the Flow-Rule \eqref{eq:FlowRuleBIS}.
\end{proof}
Yet, this sufficient and necessary condition \eqref{eq:Naive CNS} relies on the definition of the time $t_0$. It remains to find an equivalent condition which can be expressed only in terms of the data of the setting. This is the content of Theorem \ref{thm:CNS FR}, illustrated in Figure \ref{fig:1}.

\begin{figure}[hbtp]
\begin{tikzpicture}[line cap=round,line join=round,>=triangle 45,x=0.8cm,y=0.8cm]
\clip(-3,-1.5) rectangle (14.,7.);
\draw [line width=1.pt] (0.,7.)-- (0.,-1.);
\draw [line width=1.pt] (-1.5,0.)-- (12.5,0.);
\draw [line width=1.pt] (0.,7.)-- (-0.1,6.8);
\draw [line width=1.pt] (0.,7.)-- (0.1,6.8);
\draw [line width=1.pt] (12.5,0.)-- (12.3,0.1);
\draw [line width=1.pt] (12.5,0.)-- (12.3,-0.1);
\draw [line width=1.pt,dash pattern=on 4pt off 4pt] (-0.3,3.5)-- (11.,3.5);
\draw [line width=1.pt,dash pattern=on 4pt off 4pt] (11.,3.5)-- (11.,-0.2);
\draw (10.7,-0.2) node[anchor=north west] {$\mathlarger{\mathlarger{T}}$};
\draw (0.,-0.2) node[anchor=north west] {$\mathlarger{\mathlarger{0}}$};
\draw (-2.6,4.5) node[anchor=north west] {$\mathlarger{\mathlarger{\sqrt{ 2 \kappa a_0} \frac{L}{a_1}}}$};
\draw (0.3,7.2) node[anchor=north west] {$\mathlarger{\mathlarger{\left\lvert \big[ w(t) \big]^L_0 \right\rvert}}$};
\draw (1.2,-0.1) node[anchor=north west] {$\mathlarger{\mathlarger{\inf \left\{ t \in [0,T]\, : \, \left\lvert \big[ w(t) \big]^L_0 \right\rvert > \sqrt{2 \kappa a_0}\frac{L}{a_1} \right\}}}$};
\draw (12.5,0.4) node[anchor=north west] {\large{time}};
\draw [line width=1.pt,dash pattern=on 4pt off 4pt] (4.4,-0.2)-- (4.4,3.5);
\draw[line width=1.2pt] (0.01682648858733636,1.8702242712598633) -- (0.0715858523629325,1.749767784091655);
\draw[line width=1.2pt] (0.0715858523629325,1.749767784091655) -- (0.12634521613852862,1.5876718976017739);
\draw[line width=1.2pt] (0.12634521613852862,1.5876718976017739) -- (0.18110457991412474,1.4587015300242658);
\draw[line width=1.2pt] (0.18110457991412474,1.4587015300242658) -- (0.23586394368972086,1.3964001387000735);
\draw[line width=1.2pt] (0.23586394368972086,1.3964001387000735) -- (0.290623307465317,1.4076016423278617);
\draw[line width=1.2pt] (0.290623307465317,1.4076016423278617) -- (0.34538267124091315,1.483083134310561);
\draw[line width=1.2pt] (0.34538267124091315,1.483083134310561) -- (0.4001420350165093,1.6051988627663154);
\draw[line width=1.2pt] (0.4001420350165093,1.6051988627663154) -- (0.45490139879210545,1.7531785224580163);
\draw[line width=1.2pt] (0.45490139879210545,1.7531785224580163) -- (0.5096607625677015,1.9066416881331145);
\draw[line width=1.2pt] (0.5096607625677015,1.9066416881331145) -- (0.5644201263432976,2.047771244642863);
\draw[line width=1.2pt] (0.5644201263432976,2.047771244642863) -- (0.6191794901188937,2.1624985214094243);
\draw[line width=1.2pt] (0.6191794901188937,2.1624985214094243) -- (0.6739388538944898,2.2409785861785525);
\draw[line width=1.2pt] (0.6739388538944898,2.2409785861785525) -- (0.7286982176700859,2.277573284286448);
\draw[line width=1.2pt] (0.7286982176700859,2.277573284286448) -- (0.783457581445682,2.2705099773463604);
\draw[line width=1.2pt] (0.783457581445682,2.2705099773463604) -- (0.8382169452212781,2.221343706414017);
\draw[line width=1.2pt] (0.8382169452212781,2.221343706414017) -- (0.8929763089968742,2.1343181181997677);
\draw[line width=1.2pt] (0.8929763089968742,2.1343181181997677) -- (0.9477356727724703,2.015694622078895);
\draw[line width=1.2pt] (0.9477356727724703,2.015694622078895) -- (1.0024950365480665,1.8730987657454485);
\draw[line width=1.2pt] (1.0024950365480665,1.8730987657454485) -- (1.0572544003236626,1.7149167772163638);
\draw[line width=1.2pt] (1.0572544003236626,1.7149167772163638) -- (1.1120137640992587,1.54976281841558);
\draw[line width=1.2pt] (1.1120137640992587,1.54976281841558) -- (1.1667731278748548,1.3860280563473266);
\draw[line width=1.2pt] (1.1667731278748548,1.3860280563473266) -- (1.2215324916504509,1.2315156167067767);
\draw[line width=1.2pt] (1.2215324916504509,1.2315156167067767) -- (1.276291855426047,1.0931603696795331);
\draw[line width=1.2pt] (1.276291855426047,1.0931603696795331) -- (1.331051219201643,0.9768289160194658);
\draw[line width=1.2pt] (1.331051219201643,0.9768289160194658) -- (1.3858105829772391,0.8871927680629821);
\draw[line width=1.2pt] (1.3858105829772391,0.8871927680629821) -- (1.4405699467528352,0.827666287068004);
\draw[line width=1.2pt] (1.4405699467528352,0.827666287068004) -- (1.4953293105284313,0.8004002253725491);
\draw[line width=1.2pt] (1.4953293105284313,0.8004002253725491) -- (1.5500886743040274,0.8063215502476058);
\draw[line width=1.2pt] (1.5500886743040274,0.8063215502476058) -- (1.6048480380796235,0.845210451034949);
\draw[line width=1.2pt] (1.6048480380796235,0.845210451034949) -- (1.6596074018552196,0.9158059358682411);
\draw[line width=1.2pt] (1.6596074018552196,0.9158059358682411) -- (1.7143667656308157,1.0159321164654658);
\draw[line width=1.2pt] (1.7143667656308157,1.0159321164654658) -- (1.7691261294064118,1.142638086425631);
\draw[line width=1.2pt] (1.7691261294064118,1.142638086425631) -- (1.8238854931820079,1.2923451637764114);
\draw[line width=1.2pt] (1.8238854931820079,1.2923451637764114) -- (1.878644856957604,1.4609961492397117);
\draw[line width=1.2pt] (1.878644856957604,1.4609961492397117) -- (1.9334042207332,1.6442021158067428);
\draw[line width=1.2pt] (1.9334042207332,1.6442021158067428) -- (1.9881635845087962,1.8373830696257958);
\draw[line width=1.2pt] (1.9881635845087962,1.8373830696257958) -- (2.0429229482843922,2.0358995909306437);
\draw[line width=1.2pt] (2.0429229482843922,2.0358995909306437) -- (2.0976823120599883,2.2351732664890442);
\draw[line width=1.2pt] (2.0976823120599883,2.2351732664890442) -- (2.1524416758355844,2.4307943560181364);
\draw[line width=1.2pt] (2.1524416758355844,2.4307943560181364) -- (2.2072010396111805,2.618615691856517);
\draw[line width=1.2pt] (2.2072010396111805,2.618615691856517) -- (2.2619604033867766,2.794832294204509);
\draw[line width=1.2pt] (2.2619604033867766,2.794832294204509) -- (2.3167197671623727,2.956046595712489);
\draw[line width=1.2pt] (2.3167197671623727,2.956046595712489) -- (2.371479130937969,3.09931951279322);
\draw[line width=1.2pt] (2.371479130937969,3.09931951279322) -- (2.426238494713565,3.2222078814069244);
\draw[line width=1.2pt] (2.426238494713565,3.2222078814069244) -- (2.480997858489161,3.3227889974781775);
\draw[line width=1.2pt] (2.480997858489161,3.3227889974781775) -- (2.535757222264757,3.3996731721432707);
\draw[line width=1.2pt] (2.535757222264757,3.3996731721432707) -- (2.590516586040353,3.452005335398611);
\draw[line width=1.2pt] (2.590516586040353,3.452005335398611) -- (2.6452759498159493,3.4794568040720195);
\draw[line width=1.2pt] (2.6452759498159493,3.4794568040720195) -- (2.7000353135915454,3.4822083768350893);
\draw[line width=1.2pt] (2.7000353135915454,3.4822083768350893) -- (2.7547946773671415,3.460925935409368);
\draw[line width=1.2pt] (2.7547946773671415,3.460925935409368) -- (2.8095540411427375,3.4167297220505195);
\draw[line width=1.2pt] (2.8095540411427375,3.4167297220505195) -- (2.8643134049183336,3.3511584333064532);
\draw[line width=1.2pt] (2.8643134049183336,3.3511584333064532) -- (2.9190727686939297,3.266129223024214);
\draw[line width=1.2pt] (2.9190727686939297,3.266129223024214) -- (2.973832132469526,3.1638946473080143);
\draw[line width=1.2pt] (2.973832132469526,3.1638946473080143) -- (3.028591496245122,3.0469975138868826);
\draw[line width=1.2pt] (3.028591496245122,3.0469975138868826) -- (3.083350860020718,2.918224521024152);
\draw[line width=1.2pt] (3.083350860020718,2.918224521024152) -- (3.138110223796314,2.780559489208093);
\draw[line width=1.2pt] (3.138110223796314,2.780559489208093) -- (3.19286958757191,2.6371369045673436);
\draw[line width=1.2pt] (3.19286958757191,2.6371369045673436) -- (3.2476289513475063,2.4911964080928586);
\draw[line width=1.2pt] (3.2476289513475063,2.4911964080928586) -- (3.3023883151231024,2.3460387808551926);
\draw[line width=1.2pt] (3.3023883151231024,2.3460387808551926) -- (3.3571476788986985,2.204983893742678);
\draw[line width=1.2pt] (3.3571476788986985,2.204983893742678) -- (3.4119070426742946,2.071331011825699);
\draw[line width=1.2pt] (3.4119070426742946,2.071331011825699) -- (3.4666664064498907,1.9483217690665484);
\draw[line width=1.2pt] (3.4666664064498907,1.9483217690665484) -- (3.5214257702254868,1.839106059339333);
\draw[line width=1.2pt] (3.5214257702254868,1.839106059339333) -- (3.576185134001083,1.7467110250241455);
\draw[line width=1.2pt] (3.576185134001083,1.7467110250241455) -- (3.630944497776679,1.6740132650694035);
\draw[line width=1.2pt] (3.630944497776679,1.6740132650694035) -- (3.685703861552275,1.6237143305228439);
\draw[line width=1.2pt] (3.685703861552275,1.6237143305228439) -- (3.740463225327871,1.5983195271536248);
\draw[line width=1.2pt] (3.740463225327871,1.5983195271536248) -- (3.795222589103467,1.600120001872828);
\draw[line width=1.2pt] (3.795222589103467,1.600120001872828) -- (3.8499819528790633,1.6311780520800236);
\draw[line width=1.2pt] (3.8499819528790633,1.6311780520800236) -- (3.9047413166546594,1.6933155646316818);
\draw[line width=1.2pt] (3.9047413166546594,1.6933155646316818) -- (3.9595006804302555,1.7881054636076386);
\draw[line width=1.2pt] (3.9595006804302555,1.7881054636076386) -- (4.014260044205852,1.9168660231728427);
\draw[line width=1.2pt] (4.014260044205852,1.9168660231728427) -- (4.069019407981448,2.0806578832951472);
\draw[line width=1.2pt] (4.069019407981448,2.0806578832951472) -- (4.123778771757044,2.2802835915701727);
\draw[line width=1.2pt] (4.123778771757044,2.2802835915701727) -- (4.17853813553264,2.516289483595144);
\draw[line width=1.2pt] (4.17853813553264,2.516289483595144) -- (4.233297499308236,2.7889697068950214);
\draw[line width=1.2pt] (4.233297499308236,2.7889697068950214) -- (4.288056863083832,3.09837218900741);
\draw[line width=1.2pt] (4.288056863083832,3.09837218900741) -- (4.342816226859428,3.444306348654671);
\draw[line width=1.2pt] (4.342816226859428,3.462065095371185) -- (4.397575590635024,3.629948457889982);
\draw[line width=1.2pt] (4.397575590635024,3.629948457889982) -- (4.45233495441062,3.7575228677637478);
\draw[line width=1.2pt] (4.45233495441062,3.7575228677637478) -- (4.507094318186216,3.8499689773111587);
\draw[line width=1.2pt] (4.507094318186216,3.8499689773111587) -- (4.5618536819618125,3.9185796816506286);
\draw[line width=1.2pt] (4.5618536819618125,3.9185796816506286) -- (4.616613045737409,3.971099837521969);
\draw[line width=1.2pt] (4.616613045737409,3.971099837521969) -- (4.671372409513005,4.01244228136444);
\draw[line width=1.2pt] (4.671372409513005,4.01244228136444) -- (4.726131773288601,4.04576686444668);
\draw[line width=1.2pt] (4.726131773288601,4.04576686444668) -- (4.780891137064197,4.073168757540306);
\draw[line width=1.2pt] (4.780891137064197,4.073168757540306) -- (4.835650500839793,4.096081927185128);
\draw[line width=1.2pt] (4.835650500839793,4.096081927185128) -- (4.890409864615389,4.115516903342197);
\draw[line width=1.2pt] (4.890409864615389,4.115516903342197) -- (4.945169228390985,4.132204577580305);
\draw[line width=1.2pt] (4.945169228390985,4.132204577580305) -- (4.999928592166581,4.146685836904015);
\draw[line width=1.2pt] (4.999928592166581,4.146685836904015) -- (5.054687955942177,4.159369112625393);
\draw[line width=1.2pt] (5.054687955942177,4.159369112625393) -- (5.109447319717773,4.170568357710904);
\draw[line width=1.2pt] (5.109447319717773,4.170568357710904) -- (5.1642066834933695,4.180528742486414);
\draw[line width=1.2pt] (5.1642066834933695,4.180528742486414) -- (5.218966047268966,4.189444437320316);
\draw[line width=1.2pt] (5.218966047268966,4.189444437320316) -- (5.273725411044562,4.1974711722288705);
\draw[line width=1.2pt] (5.273725411044562,4.1974711722288705) -- (5.328484774820158,4.204735272146396);
\draw[line width=1.2pt] (5.328484774820158,4.204735272146396) -- (5.383244138595754,4.211340265956178);
\draw[line width=1.2pt] (5.383244138595754,4.211340265956178) -- (5.43800350237135,4.217371794464584);
\draw[line width=1.2pt] (5.43800350237135,4.217371794464584) -- (5.492762866146946,4.222901305715212);
\draw[line width=1.2pt] (5.492762866146946,4.222901305715212) -- (5.547522229922542,4.227988872539378);
\draw[line width=1.2pt] (5.547522229922542,4.227988872539378) -- (5.602281593698138,4.2326853658086705);
\draw[line width=1.2pt] (5.602281593698138,4.2326853658086705) -- (5.657040957473734,4.237034172975937);
\draw[line width=1.2pt] (5.657040957473734,4.237034172975937) -- (5.7118003212493305,4.241202860477588);
\draw[line width=1.2pt] (5.7118003212493305,4.241202860477588) -- (5.7665596850249266,4.2473014430365765);
\draw[line width=1.2pt] (5.7665596850249266,4.2473014430365765) -- (5.821319048800523,4.259248256590282);
\draw[line width=1.2pt] (5.821319048800523,4.259248256590282) -- (5.876078412576119,4.278349078426568);
\draw[line width=1.2pt] (5.876078412576119,4.278349078426568) -- (5.930837776351715,4.303374808074773);
\draw[line width=1.2pt] (5.930837776351715,4.303374808074773) -- (5.985597140127311,4.332359631606364);
\draw[line width=1.2pt] (5.985597140127311,4.332359631606364) -- (6.040356503902907,4.363534196995246);
\draw[line width=1.2pt] (6.040356503902907,4.363534196995246) -- (6.095115867678503,4.39557039654044);
\draw[line width=1.2pt] (6.095115867678503,4.39557039654044) -- (6.149875231454099,4.4275554535922526);
\draw[line width=1.2pt] (6.149875231454099,4.4275554535922526) -- (6.204634595229695,4.4588975608807475);
\draw[line width=1.2pt] (6.204634595229695,4.4588975608807475) -- (6.259393959005291,4.489233686031703);
\draw[line width=1.2pt] (6.259393959005291,4.489233686031703) -- (6.3141533227808875,4.518357295259515);
\draw[line width=1.2pt] (6.3141533227808875,4.518357295259515) -- (6.368912686556484,4.546166234576058);
\draw[line width=1.2pt] (6.368912686556484,4.546166234576058) -- (6.42367205033208,4.572626572400076);
\draw[line width=1.2pt] (6.42367205033208,4.572626572400076) -- (6.478431414107676,4.597748006092006);
\draw[line width=1.2pt] (6.478431414107676,4.597748006092006) -- (6.533190777883272,4.621567320457943);
\draw[line width=1.2pt] (6.533190777883272,4.621567320457943) -- (6.587950141658868,4.644137347951213);
\draw[line width=1.2pt] (6.587950141658868,4.644137347951213) -- (6.642709505434464,4.665519654598475);
\draw[line width=1.2pt] (6.642709505434464,4.665519654598475) -- (6.69746886921006,4.685779738795178);
\draw[line width=1.2pt] (6.69746886921006,4.685779738795178) -- (6.752228232985656,4.704983921810693);
\draw[line width=1.2pt] (6.752228232985656,4.704983921810693) -- (6.806987596761252,4.723197375703676);
\draw[line width=1.2pt] (6.806987596761252,4.723197375703676) -- (6.861746960536848,4.740482914508512);
\draw[line width=1.2pt] (6.861746960536848,4.740482914508512) -- (6.9165063243124445,4.756900295824163);
\draw[line width=1.2pt] (6.9165063243124445,4.756900295824163) -- (6.971265688088041,4.7725058616031415);
\draw[line width=1.2pt] (6.971265688088041,4.7725058616031415) -- (7.026025051863637,4.787352402058739);
\draw[line width=1.2pt] (7.026025051863637,4.787352402058739) -- (7.080784415639233,4.801489163928769);
\draw[line width=1.2pt] (7.080784415639233,4.801489163928769) -- (7.135543779414829,4.814961949688291);
\draw[line width=1.2pt] (7.135543779414829,4.814961949688291) -- (7.190303143190425,4.827813271584722);
\draw[line width=1.2pt] (7.190303143190425,4.827813271584722) -- (7.245062506966021,4.840082536179831);
\draw[line width=1.2pt] (7.245062506966021,4.840082536179831) -- (7.299821870741617,4.851806243171764);
\draw[line width=1.2pt] (7.299821870741617,4.851806243171764) -- (7.354581234517213,4.863018187815317);
\draw[line width=1.2pt] (7.354581234517213,4.863018187815317) -- (7.409340598292809,4.873749660058511);
\draw[line width=1.2pt] (7.409340598292809,4.873749660058511) -- (7.464099962068405,4.884029636112114);
\draw[line width=1.2pt] (7.464099962068405,4.884029636112114) -- (7.5188593258440015,4.8938849599380845);
\draw[line width=1.2pt] (7.5188593258440015,4.8938849599380845) -- (7.573618689619598,4.903340513338856);
\draw[line width=1.2pt] (7.573618689619598,4.903340513338856) -- (7.628378053395194,4.912419374128349);
\draw[line width=1.2pt] (7.628378053395194,4.912419374128349) -- (7.68313741717079,4.921142962390009);
\draw[line width=1.2pt] (7.68313741717079,4.921142962390009) -- (7.737896780946386,4.929531175161838);
\draw[line width=1.2pt] (7.737896780946386,4.929531175161838) -- (7.792656144721982,4.937602510092647);
\draw[line width=1.2pt] (7.792656144721982,4.937602510092647) -- (7.847415508497578,4.945374178728852);
\draw[line width=1.2pt] (7.847415508497578,4.945374178728852) -- (7.902174872273174,4.952862210145813);
\draw[line width=1.2pt] (7.902174872273174,4.952862210145813) -- (7.95693423604877,4.960081569202656);
\draw[line width=1.2pt] (7.95693423604877,4.960081569202656) -- (8.011693599824367,4.967175465477924);
\draw[line width=1.2pt] (8.011693599824367,4.967175465477924) -- (8.066452963599964,4.976228698019497);
\draw[line width=1.2pt] (8.066452963599964,4.976228698019497) -- (8.121212327375561,4.991145342246115);
\draw[line width=1.2pt] (8.121212327375561,4.991145342246115) -- (8.175971691151158,5.0132248190287125);
\draw[line width=1.2pt] (8.175971691151158,5.0132248190287125) -- (8.230731054926755,5.041230493396029);
\draw[line width=1.2pt] (8.230731054926755,5.041230493396029) -- (8.285490418702352,5.073188114838083);
\draw[line width=1.2pt] (8.285490418702352,5.073188114838083) -- (8.34024978247795,5.107320484690813);
\draw[line width=1.2pt] (8.34024978247795,5.107320484690813) -- (8.395009146253546,5.14229283800916);
\draw[line width=1.2pt] (8.395009146253546,5.14229283800916) -- (8.449768510029143,5.177186982241173);
\draw[line width=1.2pt] (8.449768510029143,5.177186982241173) -- (8.50452787380474,5.211406794137539);
\draw[line width=1.2pt] (8.50452787380474,5.211406794137539) -- (8.559287237580337,5.2445858459420345);
\draw[line width=1.2pt] (8.559287237580337,5.2445858459420345) -- (8.614046601355934,5.276514964145197);
\draw[line width=1.2pt] (8.614046601355934,5.276514964145197) -- (8.668805965131531,5.307089972509306);
\draw[line width=1.2pt] (8.668805965131531,5.307089972509306) -- (8.723565328907128,5.336275422438853);
\draw[line width=1.2pt] (8.723565328907128,5.336275422438853) -- (8.778324692682725,5.364079909020205);
\draw[line width=1.2pt] (8.778324692682725,5.364079909020205) -- (8.833084056458322,5.390539456396056);
\draw[line width=1.2pt] (8.833084056458322,5.390539456396056) -- (8.887843420233919,5.415706418516413);
\draw[line width=1.2pt] (8.887843420233919,5.415706418516413) -- (8.942602784009516,5.4396421163951105);
\draw[line width=1.2pt] (8.942602784009516,5.4396421163951105) -- (8.997362147785113,5.462411996789502);
\draw[line width=1.2pt] (8.997362147785113,5.462411996789502) -- (9.05212151156071,5.484082489437277);
\draw[line width=1.2pt] (9.05212151156071,5.484082489437277) -- (9.106880875336307,5.504719007236181);
\draw[line width=1.2pt] (9.106880875336307,5.504719007236181) -- (9.161640239111904,5.5243847142046345);
\draw[line width=1.2pt] (9.161640239111904,5.5243847142046345) -- (9.2163996028875,5.5431398075501495);
\draw[line width=1.2pt] (9.2163996028875,5.5431398075501495) -- (9.271158966663098,5.561041142003718);
\draw[line width=1.2pt] (9.271158966663098,5.561041142003718) -- (9.325918330438695,5.578142079824557);
\draw[line width=1.2pt] (9.325918330438695,5.578142079824557) -- (9.380677694214292,5.594492487297);
\draw[line width=1.2pt] (9.380677694214292,5.594492487297) -- (9.435437057989889,5.610138823971508);
\draw[line width=1.2pt] (9.435437057989889,5.610138823971508) -- (9.490196421765486,5.625124288242953);
\draw[line width=1.2pt] (9.490196421765486,5.625124288242953) -- (9.544955785541083,5.639488994718269);
\draw[line width=1.2pt] (9.544955785541083,5.639488994718269) -- (9.59971514931668,5.653270166953137);
\draw[line width=1.2pt] (9.59971514931668,5.653270166953137) -- (9.654474513092277,5.6665023347137975);
\draw[line width=1.2pt] (9.654474513092277,5.6665023347137975) -- (9.709233876867874,5.679217528745772);
\draw[line width=1.2pt] (9.709233876867874,5.679217528745772) -- (9.76399324064347,5.691445468651073);
\draw[line width=1.2pt] (9.76399324064347,5.691445468651073) -- (9.818752604419068,5.703213741262471);
\draw[line width=1.2pt] (9.818752604419068,5.703213741262471) -- (9.873511968194665,5.7145479681139575);
\draw[line width=1.2pt] (9.873511968194665,5.7145479681139575) -- (9.928271331970262,5.725471961417847);
\draw[line width=1.2pt] (9.928271331970262,5.725471961417847) -- (9.983030695745859,5.736007868493598);
\draw[line width=1.2pt] (9.983030695745859,5.736007868493598) -- (10.037790059521456,5.746176304936904);
\draw[line width=1.2pt] (10.037790059521456,5.746176304936904) -- (10.092549423297053,5.755996477029132);
\draw[line width=1.2pt] (10.092549423297053,5.755996477029132) -- (10.14730878707265,5.765486294008575);
\draw[line width=1.2pt] (10.14730878707265,5.765486294008575) -- (10.202068150848246,5.77466247088493);
\draw[line width=1.2pt] (10.202068150848246,5.77466247088493) -- (10.256827514623843,5.783540622497565);
\draw[line width=1.2pt] (10.256827514623843,5.783540622497565) -- (10.31158687839944,5.7921353495101755);
\draw[line width=1.2pt] (10.31158687839944,5.7921353495101755) -- (10.366346242175037,5.800460317009701);
\draw[line width=1.2pt] (10.366346242175037,5.800460317009701) -- (10.421105605950634,5.808528326342214);
\draw[line width=1.2pt] (10.421105605950634,5.808528326342214) -- (10.475864969726231,5.816351380777846);
\draw[line width=1.2pt] (10.475864969726231,5.816351380777846) -- (10.530624333501828,5.823940745553806);
\draw[line width=1.2pt] (10.530624333501828,5.823940745553806) -- (10.585383697277425,5.831307002801165);
\draw[line width=1.2pt] (10.585383697277425,5.831307002801165) -- (10.640143061053022,5.838460101818814);
\draw[line width=1.2pt] (10.640143061053022,5.838460101818814) -- (10.69490242482862,5.845409405117671);
\draw[line width=1.2pt] (10.69490242482862,5.845409405117671) -- (10.749661788604216,5.852163730620246);
\draw[line width=1.2pt] (10.749661788604216,5.852163730620246) -- (10.804421152379813,5.858731390365412);
\draw[line width=1.2pt] (10.804421152379813,5.858731390365412) -- (10.85918051615541,5.865120226035679);
\draw[line width=1.2pt] (10.85918051615541,5.865120226035679) -- (10.913939879931007,5.871337641594453);
\draw[line width=1.2pt] (10.913939879931007,5.871337641594453) -- (10.968699243706604,5.877390633293524);
\draw[line width=1.2pt] (10.968699243706604,5.877390633293524) -- (11.023458607482201,5.88328581728633);
\draw (5.,2.2) node[anchor=north west] {$\mathlarger{\mathlarger{\left\lvert \big[ w \big]^L_0 \right\rvert \text{ non-decreasing}}}$};
\end{tikzpicture}
\caption{}
\label{fig:1}
\end{figure}

\begin{proof}[Proof of Theorem \ref{thm:CNS FR}:]
{\it Necessary condition.} Assume that the Energy Balance \eqref{eq:Energy Balance} is satisfied at all time $t \in [0,T]$. Then, Theorem 5.2 and Theorem 6.1 of \cite{DMDSM} ensure that $(u,e,p,\sigma) \in AC \big( [0,T];BV((0,L)) \times \R \times \M([0,L]) \times \KK \big)$ satisfies the Flow Rule:
\begin{equation}\label{eq:FR}
 \sigma(t) \dot p(t) \big( [0,L] \big) = \sqrt{2\kappa a_0} \left\lvert \dot p(t) \right\rvert \big( [0,L] \big) \quad \text{for } \LL^1 \text{-a.e. } t \in [0,T] .
\end{equation} 
On the one hand, \eqref{eq:2} together with the trace theorem in $BV((0,L))$ entail that, at all time, $Du ((0,L)) = \big[ u \big]^L_0 = L e + p((0,L)) = \big[ w \big]^L_0 - \frac{\sigma l}{a_0} + p((0,L))$ so that
$$
p([0,L]) = p((0,L)) + \big[ w - u \big]^L_0 = \frac{\sigma l }{a_0} = \frac{\sigma \mu ([0,L])}{a_0}.
$$
On the other hand, by \eqref{eq:dot mu dot l} we infer that for $\LL^1$-a.e. time in $ [0,T]$, the following time derivatives exist and satisfy 
\begin{equation}\label{eq:dot p}
\dot p = \frac{ \dot \sigma \mu}{a_0} + \frac{\sigma \dot \mu}{a_0} \text{ in } \M([0,L]) \quad \text{hence} \quad \dot p ([0,L]) = \frac{ \dot \sigma l}{a_0} + \frac{\sigma \dot \mu ([0,L])}{a_0} =  \frac{ \dot{ \wideparen{ \sigma l }} }{a_0} = \dot{ \wideparen{ p([0,L])}} .
\end{equation}
Let $t_0 := \sup \, \{ t \in [0,T] \, : \, l(t) = 0 \}$, with the convention $t_0=0$ if $l(0)>0$ (see Figure \ref{fig:2}). As $l$ is non-decreasing on $[0,T]$, if $t_0 = T$ then $l \equiv 0$ at all time and \eqref{eq:2} entails that $\big[ w \big]^L_0 \equiv \sigma \frac{L}{a_1}$ at all time as well, so that \eqref{eq:CNS w} is always true since $\sigma \in \KK$. We can thus assume that $t_0 < T$. Suppose that $0 \leq a < b \leq T$ are such that
$$\left\lvert \big[ w(b) \big]^L_0 \right\rvert < \left\lvert \big[ w(a) \big]^L_0 \right\rvert .$$
We have to prove that 
\begin{equation}\label{eq:NC}
\left\lvert \big[ w(b) \big]^L_0 \right\rvert \leq \sqrt{2\kappa a_0} \frac{L}{a_1}.
\end{equation}

Using again the non-decreasing character of $l$ and \eqref{eq:2}, since 
$$\left\lvert \big[ w \big]^L_0 \right\rvert = \left\lvert \sigma \right\rvert \left( \frac{l}{a_0} + \frac{L}{a_1} \right),$$
we necessarily have $ \left\lvert \sigma(b) \right\rvert < \left\lvert \sigma (a) \right\rvert \leq \sqrt{ 2 \kappa a_0}$ hence $\left\lvert \sigma(b) \right\rvert < \sqrt{ 2 \kappa a_0}$ in particular. By continuity of $\sigma$ and because $0 < b \leq T$, there exists $\eta > 0$ such that for all $s \in [b - \eta, b] \subset [0,T]$, $\left\lvert \sigma(s) \right\rvert < \sqrt{ 2 \kappa a_0}$. Let us consider 
$$ s_0 := \inf \{ s < b \, : \,  \left\lvert \sigma (t) \right\rvert < \sqrt{2 \kappa a_0} \quad \text{for all } s < t \leq b \} $$
and
$$
I := (s_0,b] .
$$
Recalling Theorem \ref{thm:DAMAGE}, by continuity we know that $l$ is constant on the whole segment 
$$\bar{I} := [s_0,b].$$
Let us prove that $l \equiv 0$ on $\bar I$ (hence on $[0,b]$), which will prove \eqref{eq:NC} due to \eqref{eq:2} once more. By contradiction, assume that $l \equiv c >0$ on $\bar I$. Since $\sigma$ does not saturate the constraint during all the time interval $(s_0,b]$, the Flow Rule \eqref{eq:FR} implies that $\dot p(s) ([0,L]) = \frac{ \dot \sigma (s) l  }{a_0} = 0$ for $\LL^1$-a.e. $s \in (s_0,b]$. Therefore, $\sigma$ is constant during the whole time interval $\bar I$ and in particular
$$\left\lvert \sigma(s_0) \right\rvert < \sqrt{ 2 \kappa a_0}.$$
Next, let us show that $s_0 > 0$. Indeed, 
\begin{itemize}
\item either $l(0)=0$. Hence, one can check that $l(t_0)=0$ by definition of $t_0$ and continuity of $l$. This implies that $s_0 > t_0$ since $l (s_0) = c >0$. 
\item Or $l(0) > 0$ thus $t_0=0$. Then, \eqref{eq:Constitutive Equation Initial Time} entails that $\left\lvert \sigma(0) \right\rvert = \sqrt{ 2 \kappa a_0}$ so that $s_0 > 0$ since $\left\lvert \sigma(s_0) \right\rvert < \sqrt{2 \kappa a_0}$.
\end{itemize}
Thus, since $s_0 > 0$, $\left\lvert \sigma(s_0) \right\rvert < \sqrt{2 \kappa a_0}$ and $\sigma$ is continuous on $[0,T]$, there exists $\eta > 0$ small enough such that $\left\lvert \sigma(s) \right\rvert < \sqrt{ 2 \kappa a_0}$ for all $s \in (s_0 - \eta, b] \subset [0,T]$, which is impossible by definition of $I$. 

\medskip
{\it Sufficient condition.} Assume \eqref{eq:CNS w}. Let us show that the Energy Balance \eqref{eq:Energy Balance} is satisfied. As before, we consider the time $t_0 := \sup \, \{ t \in [0,T] \, : \, l(t) = 0 \}$, with the convention $t_0=0$ if $l(0)>0$. If $t_0 = T$ then $l \equiv 0$ at all time and \eqref{eq:link mu l} entails that $\mu \equiv 0 \equiv p$ in $\M([0,L])$ at all time as well. Hence, the Energy Balance is obviously satisfied as it simply states that 
$$\E(t) = \E(0) + \int_0^t \int_0^L \sigma \dot w' \, dx ds = \frac{\sigma(t)^2 L}{2 a_1},$$
which is true thanks to \eqref{eq:E(t)} and \eqref{eq:2}. Therefore, we can assume that $t_0 < T$.

\medskip
Let us first note that we always have
\begin{equation}\label{eq: sigma t0}
\left\lvert \sigma(t_0) \right\rvert = \sqrt{ 2 \kappa a_0}.
\end{equation}
Indeed,
\begin{itemize}
\item either $t_0 > 0$. In this case, by continuity of $l$ and definition of $t_0$, we must have $l(t_0) = 0$. By contradiction, assume that $\left\lvert \sigma(t_0) \right\rvert < \sqrt{ 2 \kappa a_0}$. Then, by continuity of $\sigma$ and $l$ again, there exists $\eta > 0$ small enough such that $\left\lvert \sigma(s) \right\rvert < \sqrt{ 2 \kappa a_0} $ for all $s \in ( t_0 - \eta, t_0 + \eta) \subset [0,T]$. Once more, Theorem \ref{thm:DAMAGE} entails that $l$ is constant on the whole segment $[ t_0 - \eta, t_0 + \eta]$. In particular, $0 = l(t_0) = l(t_0 + \eta)$ which is impossible by definition of $t_0$.
\item Or $t_0 = 0$. We must again consider two cases.
		\begin{itemize}
		\item Either $l(0) = 0$. By contradiction, assume that $\left\lvert \sigma(0) \right\rvert < \sqrt{ 2 \kappa a_0} $. By continuity of $l$ and $\sigma$, as before there exists $\eta > 0$ small enough such that $\left\lvert \sigma(s) \right\rvert < \sqrt{ 2 \kappa a_0} $ for all $s \in [0, \eta) \subset [0,T]$, entailing that $0 = l(0) = l(\eta)$ which is again impossible by definition of $t_0$.
		\item Or $l(0) > 0$ and \eqref{eq:Constitutive Equation Initial Time} ensures that $\left\lvert \sigma(0) \right\rvert = \sqrt{ 2 \kappa a_0} $.
		\end{itemize}
\end{itemize} 

\medskip
Next, let us show that, as illustrated in Figure \ref{fig:2},
\begin{equation}\label{eq:necessary}
\left\lvert \sigma(t) \right\rvert = \sqrt{2 \kappa a_0} \quad \text{for all subsequent time } t \in [t_0,T].
\end{equation}
\begin{figure}[hbtp]
\begin{tikzpicture}[line cap=round,line join=round,>=triangle 45,x=0.8cm,y=0.8cm]
\draw [line width=1.pt] (0.,7.)-- (0.,-1.);
\draw [line width=1.pt] (-1,0.)-- (13.,0.);
\draw [line width=1.pt] (0.,7.)-- (-0.1,6.8);
\draw [line width=1.pt] (0.,7.)-- (0.1,6.8);
\draw [line width=1.pt] (13.,0.)-- (12.8,0.1);
\draw [line width=1.pt] (13.,0.)-- (12.8,-0.1);
\draw [line width=1.pt,dash pattern=on 5pt off 5pt] (-0.5,5.)-- (11.,5.);
\draw [line width=1.pt,dash pattern=on 5pt off 5pt] (11.,5.)-- (11.,-0.2);
\draw [line width=1.pt,dash pattern=on 5pt off 5pt] (5.,5.)-- (5.,-0.2);
\draw [line width=3.pt] (5.,5.)-- (11.,5.);
\draw (4.7,-0.1) node[anchor=north west] {$\mathlarger{\mathlarger{t_0}}$};
\draw (10.7,-0.1) node[anchor=north west] {$\mathlarger{\mathlarger{T}}$};
\draw (0.,-0.1) node[anchor=north west] {$\mathlarger{\mathlarger{0}}$};
\draw (-2.3,5.5) node[anchor=north west] {$\mathlarger{\mathlarger{\sqrt{ 2 \kappa a_0}}}$};
\draw (0.2,7.5) node[anchor=north west] {$\mathlarger{\mathlarger{\left\lvert \sigma(t) \right\rvert}}$};
\draw (6.5,3.) node[anchor=north west] {$\mathlarger{\mathlarger{\left\lvert \sigma \right\rvert \equiv \sqrt{2 \kappa a_0}}}$};
\draw (6.5,2.) node[anchor=north west] {$\mathlarger{\mathlarger{l > 0}}$};
\draw (1.5,4.) node[anchor=north west] {$\mathlarger{\mathlarger{l \equiv 0}}$};
\draw (1.5,3.) node[anchor=north west] {$\mathlarger{\mathlarger{\mu \equiv 0 \equiv p}}$};
\draw (1.5,2.) node[anchor=north west] {$\mathlarger{\mathlarger{\sigma  }}$ \large{ free}};
\draw (13.,0.35) node[anchor=north west] {\large{time}};
\end{tikzpicture}
\caption{}
\label{fig:2}
\end{figure}

\noindent
By contradiction, assume there exists $t_0 < t \leq T$ such that $\left\lvert \sigma(t) \right\rvert < \sqrt{2 \kappa a_0}$ and consider the maximal time interval where $\sigma$ does not saturate the constraint, by defining:
$$t_1 = \inf \{ s \in [0,t] \, : \, \left\lvert \sigma \right\rvert < \sqrt{2 \kappa a_0} \text{ for all time in } (s,t] \} $$
and
$$ t_2 = \sup \{ s \in [t,T] \, : \, \left\lvert \sigma \right\rvert < \sqrt{2 \kappa a_0} \text{ for all time in } [t,s) \}.$$
By continuity of $\sigma$ and \eqref{eq: sigma t0}, we have that $t_0 \leq t_1 < t \leq t_2 \leq T$ and $\left\lvert \sigma (s) \right\rvert < \sqrt{2 \kappa a_0}$ for all $s \in (t_1,t_2)$. Since $t_1 \geq t_0$, we infer that $l \equiv l(t_1)>0$ is a positive constant on the whole segment $[t_1,t_2]$ by Theorem \ref{thm:DAMAGE}. One can also check that $\left\lvert \sigma(t_1) \right\rvert = \sqrt{ 2 \kappa a_0}$. Indeed, either $t_1 = t_0$ and \eqref{eq: sigma t0} concludes, or $t_1 > t_0$. In particular $t_1 > 0$ and the continuity of $\sigma$ together with the definition of $t_1$ imply that  $\left\lvert \sigma(t_1) \right\rvert = \sqrt{ 2 \kappa a_0}$. Therefore, \eqref{eq:2} yields that for all $s \in (t_1,t_2)$
$${\displaystyle \left\lvert \big[ w(t_1) \big]^L_0 \right\rvert = \sqrt{2 \kappa a_0} \left( \frac{l}{a_0} + \frac{L}{a_1} \right) > \left\lvert \big[ w(s) \big]^L_0 \right\rvert = \left\lvert \sigma(s) \right\rvert \left( \frac{l}{a_0} + \frac{L}{a_1} \right) .}$$
By \eqref{eq:CNS w}, we get that ${\displaystyle \left\lvert \big[ w(s) \big]^L_0 \right\rvert  \leq \sqrt{ 2 \kappa a_0} \frac{L}{a_1} }$ for all $s \in (t_1,t_2)$. The continuity of $\big[ w \big]^L_0$ and \eqref{eq:2} imply that 
$${\displaystyle \left\lvert \big[ w(t_1) \big]^L_0 \right\rvert = \sqrt{2 \kappa a_0} \left( \frac{l}{a_0} + \frac{L}{a_1} \right) \leq \sqrt{ 2 \kappa a_0} \frac{L}{a_1} },$$
which is impossible by positivity of $l>0$. We have thus proven the validity of \eqref{eq:necessary}.

\medskip
Therefore, we are in the configuration of Figure \ref{fig:2}. Whether $l(0)>0$ or not, one can check that 
\begin{equation}\label{eq:almost done}
p(t) = \frac{\sigma(t_0)}{a_0} \mu(t) \quad \text{and} \quad \left( \sqrt{2 \kappa a_0} - \left\lvert \sigma(t) \right\rvert \right)^2 \frac{l(t)}{2a_0}  =0 \quad \text{for all } t \in [0,T].
\end{equation}
Indeed, either $l(t_0)>0$ hence $t_0=0$ and $\sigma = \sigma(0) = \pm \sqrt{ 2 \kappa a_0}$ is constant on the whole segment $[0,T]$. Or $l(t_0)=0$, so that $\mu(t) = 0 = p(t)$ for all $t \in [0,t_0]$ and $\sigma = \sigma(t_0) = \pm \sqrt{2 \kappa a_0}$ is constant on the whole segment $[t_0,T]$. In particular \eqref{eq:almost done} is satisfied. Consequently, the monotonicity of $\mu$ entails that
$$
\mathcal V (p;0,t) = \left\lvert \frac{\sigma(t_0)}{a_0} \right\rvert \left( \mu(t) \big( [0,L] \big) - \mu(0) \big( [0,L] \big) \right) = \left\lvert p(t) \right\rvert \big( [0,L] \big) - \left\lvert p(0) \right\rvert \big( [0,L] \big)
$$
for all $t \in [0,T]$. Using \eqref{eq:Initial Time}, \eqref{eq:E 0}, \eqref{eq:Constitutive Equation Initial Time} and \eqref{eq:E(t)}, we infer that 
\begin{multline*}
\E(t) = \frac{La_1}{2} e(0)^2 + \sqrt{2 \kappa a_0} \left\lvert p(0) \right\rvert \big([0,L] \big) + \int_0^t \int_0^L \sigma \dot w ' \, dx ds \\
= \frac{La_1}{2} e(t)^2 + \sqrt{2 \kappa a_0} \left\lvert p(t) \right\rvert \big( [0,L] \big) + \left( \sqrt{2 \kappa a_0} - \left\lvert \sigma(t) \right\rvert \right)^2 \frac{l(t)}{2a_0} \\
= \frac{La_1}{2} e(t)^2 + \sqrt{2 \kappa a_0} \left\lvert p(t) \right\rvert \big( [0,L] \big)
\end{multline*}
for all $t \in [0,T]$, which concludes the proof of the Energy Balance \eqref{eq:Energy Balance} since
\begin{multline*}
\frac{La_1}{2} e(0)^2  + \int_0^t \int_0^L \sigma \dot w ' \, dx ds = \frac{La_1}{2} e(t)^2 + \sqrt{2 \kappa a_0} \left\lvert p(t) \right\rvert \big( [0,L] \big) -  \sqrt{2 \kappa a_0} \left\lvert p(0) \right\rvert \big([0,L] \big) \\
 = \frac{La_1}{2} e(t)^2 + \sqrt{2 \kappa a_0} \mathcal{V}(p;0,t).
\end{multline*}
\end{proof}

\begin{rem} Condition \eqref{eq:CNS w} is equivalent to the non-decreasing character of $ \left\lvert \big[ w \big]^L_0 \right\rvert $ during the time interval $[t^*_0,T]$ where
$${\displaystyle t^*_0 = \inf \, \left\{ t \in [0,T] \, : \, \left\lvert \big[ w(t) \big]^L_0 \right\rvert > \sqrt{2\kappa a_0} L /a_1 \right\} },$$
with the convention $t^*_0 = T$ if $\left\lvert \big[ w \big]^L_0 \right\rvert$ remains smaller or equal to $ \sqrt{2\kappa a_0} L /a_1$ during the whole time interval $[0,T]$. Note that  
\begin{equation}\label{eq:t0 et t*0}
t^*_0 = t_0.
\end{equation}
Indeed, on the one hand, either $t_0 = 0 \leq t^*_0$, or $t_0 > 0$. In this case, we infer that 
$$\left\lvert \big[ w(t) \big]^L_0 \right\rvert = \left\lvert \sigma(t) \right\rvert \frac{L}{a_1} \leq \sqrt{2 \kappa a_0} \frac{L}{a_1}$$
for all previous time $0 \leq t < t_0$ according to \eqref{eq:2}, Proposition \ref{cor:subsequence} and the fact that $l(t) = 0$. Therefore, $t \leq t^*_0$ which leads to $t_0 \leq t^*_0$ when $t$ tends to $t_0$. On the other hand, assume by contradiction that $t^*_0 > t_0$. By definition of $t_0$ and $t^*_0$, we deduce that for all $t \in (t_0,t^*_0)$,
$$
\left\lvert \big[ w(t) \big]^L_0 \right\rvert = \left\lvert \sigma(t) \right\rvert \left( \frac{l(t)}{a_0} + \frac{L}{a_1} \right) \leq \sqrt{2 \kappa a_0} \frac{L}{a_1} \quad \text{and} \quad l(t) > 0 .
$$
In particular, we infer that $\left\lvert \sigma(t) \right\rvert < \sqrt{2 \kappa a_0}$ for all $t \in (t_0,t^*_0)$, entailing that $ l \equiv l(t^*_0) > 0 $ on $ [t_0,t^*_0]$ by \eqref{thm:DAMAGE} and continuity of $l$. Then, \eqref{eq: sigma t0} implies that 
$$
\left\lvert \big[ w(t_0) \big]^L_0 \right\rvert = \sqrt{2 \kappa a_0} \left( \frac{l(t^*_0)}{a_0} + \frac{L}{a_1} \right) > \sqrt{2 \kappa a_0} \frac{L}{a_1}
$$
which is impossible.
\end{rem}

\section{Concluding remarks} \label{section:ccl}
In spite of the conjecture motivated by the static analysis of \cite{BIR}, Theorem \ref{thm:CNS FR} determines the exact conditions on which the quasi-static evolution $(u,e,p,\sigma)$ is of perfect plasticity type or not. In particular, when the prescribed boundary datum $w \in AC([0,T];H^1(\R))$ is such that $\lvert [ w ]^L_0 \rvert$ is decreasing and remains larger than $\sqrt{2 \kappa a_0} \frac{L}{a_1}$, the Energy Balance \eqref{eq:Energy Balance} is never satisfied (see Figure \ref{fig:3}). 

\begin{figure}[hbtp]
\begin{tikzpicture}[line cap=round,line join=round,>=triangle 45,x=1.0cm,y=1.0cm]
\clip(-2.8,-0.4) rectangle (8.8,5);
\draw [line width=1.pt] (0.,5.)-- (0.,-0.5);
\draw [line width=1.pt] (-0.5,0.)-- (7.5,0.);
\draw [line width=1.pt] (0.,5.)-- (-0.1,4.8);
\draw [line width=1.pt] (0.,5.)-- (0.1,4.8);
\draw [line width=1.pt] (7.5,0.)-- (7.3,0.1);
\draw [line width=1.pt] (7.5,0.)-- (7.3,-0.1);
\draw [line width=1.pt,dash pattern=on 4pt off 4pt] (-0.3,1.)-- (6.9,1.);
\draw [line width=1.pt,dash pattern=on 4pt off 4pt] (6.9,1.)-- (6.9,-0.1);
\draw (6.6,0.) node[anchor=north west] {$\mathlarger{\mathlarger{T}}$};
\draw (0,0) node[anchor=north west] {$\mathlarger{\mathlarger{0}}$};
\draw (-1.9,2.2) node[anchor=north west] {$\mathlarger{\mathlarger{\sqrt{ 2 \kappa a_0} \frac{L}{a_1}}}$};
\draw (0.4,5.) node[anchor=north west] {$\mathlarger{\mathlarger{\left\lvert \big[ w(t) \big]^L_0 \right\rvert}}$};
\draw (7.6,0.3) node[anchor=north west] {\large{time}};
\draw[line width=1.2pt] (0.01682648858733636,4.093099359924541) -- (0.0715858523629325,3.9487044726589304);
\draw[line width=1.2pt] (0.0715858523629325,3.9487044726589304) -- (0.12634521613852862,3.788203430428377);
\draw[line width=1.2pt] (0.12634521613852862,3.788203430428377) -- (0.18110457991412474,3.6694640531741687);
\draw[line width=1.2pt] (0.18110457991412474,3.6694640531741687) -- (0.23586394368972086,3.583141428024387);
\draw[line width=1.2pt] (0.23586394368972086,3.583141428024387) -- (0.290623307465317,3.5186346375335122);
\draw[line width=1.2pt] (0.290623307465317,3.5186346375335122) -- (0.34538267124091315,3.468932478661655);
\draw[line width=1.2pt] (0.34538267124091315,3.468932478661655) -- (0.4001420350165093,3.4295885783212476);
\draw[line width=1.2pt] (0.4001420350165093,3.4295885783212476) -- (0.45490139879210545,3.3977253938763416);
\draw[line width=1.2pt] (0.45490139879210545,3.3977253938763416) -- (0.5096607625677015,3.3714213251963288);
\draw[line width=1.2pt] (0.5096607625677015,3.3714213251963288) -- (0.5644201263432976,3.3493523307055755);
\draw[line width=1.2pt] (0.5644201263432976,3.3493523307055755) -- (0.6191794901188937,3.3305797051602326);
\draw[line width=1.2pt] (0.6191794901188937,3.3305797051602326) -- (0.6739388538944898,3.3144208850268204);
\draw[line width=1.2pt] (0.6739388538944898,3.3144208850268204) -- (0.7286982176700859,3.300368378293086);
\draw[line width=1.2pt] (0.7286982176700859,3.300368378293086) -- (0.783457581445682,3.288037386264873);
\draw[line width=1.2pt] (0.783457581445682,3.288037386264873) -- (0.8382169452212781,3.277131039606696);
\draw[line width=1.2pt] (0.8382169452212781,3.277131039606696) -- (0.8929763089968742,3.2674167544538406);
\draw[line width=1.2pt] (0.8929763089968742,3.2674167544538406) -- (0.9477356727724703,3.2587097929020197);
\draw[line width=1.2pt] (0.9477356727724703,3.2587097929020197) -- (1.0024950365480665,3.250861602894231);
\draw[line width=1.2pt] (1.0024950365480665,3.250861602894231) -- (1.0572544003236626,3.243751397915199);
\draw[line width=1.2pt] (1.0572544003236626,3.243751397915199) -- (1.1120137640992587,3.2372799763967235);
\draw[line width=1.2pt] (1.1120137640992587,3.2372799763967235) -- (1.1667731278748548,3.2313651174022544);
\draw[line width=1.2pt] (1.1667731278748548,3.2313651174022544) -- (1.2215324916504509,3.225938103941692);
\draw[line width=1.2pt] (1.2215324916504509,3.225938103941692) -- (1.276291855426047,3.2209410651126746);
\draw[line width=1.2pt] (1.276291855426047,3.2209410651126746) -- (1.331051219201643,3.2163249210444342);
\draw[line width=1.2pt] (1.331051219201643,3.2163249210444342) -- (1.3858105829772391,3.212047777250789);
\draw[line width=1.2pt] (1.3858105829772391,3.212047777250789) -- (1.4405699467528352,3.2080736579596216);
\draw[line width=1.2pt] (1.4405699467528352,3.2080736579596216) -- (1.4953293105284313,3.204371497894673);
\draw[line width=1.2pt] (1.4953293105284313,3.204371497894673) -- (1.5500886743040274,3.200914333095444);
\draw[line width=1.2pt] (1.5500886743040274,3.200914333095444) -- (1.6048480380796235,3.197678646452276);
\draw[line width=1.2pt] (1.6048480380796235,3.197678646452276) -- (1.6596074018552196,3.1946438345514663);
\draw[line width=1.2pt] (1.6596074018552196,3.1946438345514663) -- (1.7143667656308157,3.191791770411855);
\draw[line width=1.2pt] (1.7143667656308157,3.191791770411855) -- (1.7691261294064118,3.18910644259767);
\draw[line width=1.2pt] (1.7691261294064118,3.18910644259767) -- (1.8238854931820079,3.1865736555985684);
\draw[line width=1.2pt] (1.8238854931820079,3.1865736555985684) -- (1.878644856957604,3.1841807796866495);
\draw[line width=1.2pt] (1.878644856957604,3.1841807796866495) -- (1.9334042207332,3.1819165409815877);
\draw[line width=1.2pt] (1.9334042207332,3.1819165409815877) -- (1.9881635845087962,3.179770844386147);
\draw[line width=1.2pt] (1.9881635845087962,3.179770844386147) -- (2.0429229482843922,3.1777346235446764);
\draw[line width=1.2pt] (2.0429229482843922,3.1777346235446764) -- (2.0976823120599883,3.175799713135665);
\draw[line width=1.2pt] (2.0976823120599883,3.175799713135665) -- (2.1524416758355844,3.1739587397161704);
\draw[line width=1.2pt] (2.1524416758355844,3.1739587397161704) -- (2.2072010396111805,3.1722050280501635);
\draw[line width=1.2pt] (2.2072010396111805,3.1722050280501635) -- (2.2619604033867766,3.170532520418933);
\draw[line width=1.2pt] (2.2619604033867766,3.170532520418933) -- (2.3167197671623727,3.168935706863011);
\draw[line width=1.2pt] (2.3167197671623727,3.168935706863011) -- (2.371479130937969,3.1674087248628964);
\draw[line width=1.2pt] (2.371479130937969,3.1674087248628964) -- (2.426238494713565,3.1655866531282655);
\draw[line width=1.2pt] (2.426238494713565,3.1655866531282655) -- (2.480997858489161,3.1605651716906564);
\draw[line width=1.2pt] (2.480997858489161,3.1605651716906564) -- (2.535757222264757,3.1488273716537907);
\draw[line width=1.2pt] (2.535757222264757,3.1488273716537907) -- (2.590516586040353,3.1299136111634906);
\draw[line width=1.2pt] (2.590516586040353,3.1299136111634906) -- (2.6452759498159493,3.10443570711131);
\draw[line width=1.2pt] (2.6452759498159493,3.10443570711131) -- (2.7000353135915454,3.0706579391296547);
\draw[line width=1.2pt] (2.7000353135915454,3.0706579391296547) -- (2.7547946773671415,3.0276799270485526);
\draw[line width=1.2pt] (2.7547946773671415,3.0276799270485526) -- (2.8095540411427375,2.977220210997849);
\draw[line width=1.2pt] (2.8095540411427375,2.977220210997849) -- (2.8643134049183336,2.921972107522837);
\draw[line width=1.2pt] (2.8643134049183336,2.921972107522837) -- (2.9190727686939297,2.864386277402378);
\draw[line width=1.2pt] (2.9190727686939297,2.864386277402378) -- (2.973832132469526,2.806306498983214);
\draw[line width=1.2pt] (2.973832132469526,2.806306498983214) -- (3.028591496245122,2.7489924285119383);
\draw[line width=1.2pt] (3.028591496245122,2.7489924285119383) -- (3.083350860020718,2.6932484751157815);
\draw[line width=1.2pt] (3.083350860020718,2.6932484751157815) -- (3.138110223796314,2.6395537172774293);
\draw[line width=1.2pt] (3.138110223796314,2.6395537172774293) -- (3.19286958757191,2.588164892591469);
\draw[line width=1.2pt] (3.19286958757191,2.588164892591469) -- (3.2476289513475063,2.539191038592742);
\draw[line width=1.2pt] (3.2476289513475063,2.539191038592742) -- (3.3023883151231024,2.492645392805161);
\draw[line width=1.2pt] (3.3023883151231024,2.492645392805161) -- (3.3571476788986985,2.4484807158214177);
\draw[line width=1.2pt] (3.3571476788986985,2.4484807158214177) -- (3.4119070426742946,2.406613031931171);
\draw[line width=1.2pt] (3.4119070426742946,2.406613031931171) -- (3.4666664064498907,2.3669374389891056);
\draw[line width=1.2pt] (3.4666664064498907,2.3669374389891056) -- (3.5214257702254868,2.3293385407092586);
\draw[line width=1.2pt] (3.5214257702254868,2.3293385407092586) -- (3.576185134001083,2.2936972497836026);
\draw[line width=1.2pt] (3.576185134001083,2.2936972497836026) -- (3.630944497776679,2.2598951480026006);
\draw[line width=1.2pt] (3.630944497776679,2.2598951480026006) -- (3.685703861552275,2.227817205361995);
\draw[line width=1.2pt] (3.685703861552275,2.227817205361995) -- (3.740463225327871,2.197353400151276);
\draw[line width=1.2pt] (3.740463225327871,2.197353400151276) -- (3.795222589103467,2.1683996066611);
\draw[line width=1.2pt] (3.795222589103467,2.1683996066611) -- (3.8499819528790633,2.1408579988468044);
\draw[line width=1.2pt] (3.8499819528790633,2.1408579988468044) -- (3.9047413166546594,2.1146371383133165);
\draw[line width=1.2pt] (3.9047413166546594,2.1146371383133165) -- (3.9595006804302555,2.089651860766807);
\draw[line width=1.2pt] (3.9595006804302555,2.089651860766807) -- (4.014260044205852,2.0658230382017773);
\draw[line width=1.2pt] (4.014260044205852,2.0658230382017773) -- (4.069019407981448,2.043077268937367);
\draw[line width=1.2pt] (4.069019407981448,2.043077268937367) -- (4.123778771757044,2.021346530416533);
\draw[line width=1.2pt] (4.123778771757044,2.021346530416533) -- (4.17853813553264,2.000567817902792);
\draw[line width=1.2pt] (4.17853813553264,2.000567817902792) -- (4.233297499308236,1.9806827841397654);
\draw[line width=1.2pt] (4.233297499308236,1.9806827841397654) -- (4.288056863083832,1.9616373895169623);
\draw[line width=1.2pt] (4.288056863083832,1.9616373895169623) -- (4.342816226859428,1.943381568518567);
\draw[line width=1.2pt] (4.342816226859428,1.943381568518567) -- (4.397575590635024,1.9258689156773072);
\draw[line width=1.2pt] (4.397575590635024,1.9258689156773072) -- (4.45233495441062,1.909056392539);
\draw[line width=1.2pt] (4.45233495441062,1.909056392539) -- (4.507094318186216,1.892904056006738);
\draw[line width=1.2pt] (4.507094318186216,1.892904056006738) -- (4.5618536819618125,1.877374807697739);
\draw[line width=1.2pt] (4.5618536819618125,1.877374807697739) -- (4.616613045737409,1.8624341634860988);
\draw[line width=1.2pt] (4.616613045737409,1.8624341634860988) -- (4.671372409513005,1.848050042134218);
\draw[line width=1.2pt] (4.671372409513005,1.848050042134218) -- (4.726131773288601,1.8341925717739445);
\draw[line width=1.2pt] (4.726131773288601,1.8341925717739445) -- (4.780891137064197,1.8208339129432467);
\draw[line width=1.2pt] (4.780891137064197,1.8208339129432467) -- (4.835650500839793,1.807948096886006);
\draw[line width=1.2pt] (4.835650500839793,1.807948096886006) -- (4.890409864615389,1.7955108778608078);
\draw[line width=1.2pt] (4.890409864615389,1.7955108778608078) -- (4.945169228390985,1.783499598265244);
\draw[line width=1.2pt] (4.945169228390985,1.783499598265244) -- (4.999928592166581,1.7718930654554752);
\draw[line width=1.2pt] (4.999928592166581,1.7718930654554752) -- (5.054687955942177,1.760671439219979);
\draw[line width=1.2pt] (5.054687955942177,1.760671439219979) -- (5.109447319717773,1.749816128947118);
\draw[line width=1.2pt] (5.109447319717773,1.749816128947118) -- (5.1642066834933695,1.7393096996055246);
\draw[line width=1.2pt] (5.1642066834933695,1.7393096996055246) -- (5.218966047268966,1.7291357857324114);
\draw[line width=1.2pt] (5.218966047268966,1.7291357857324114) -- (5.273725411044562,1.719279012696786);
\draw[line width=1.2pt] (5.273725411044562,1.719279012696786) -- (5.328484774820158,1.7097249245715398);
\draw[line width=1.2pt] (5.328484774820158,1.7097249245715398) -- (5.383244138595754,1.7004599180102935);
\draw[line width=1.2pt] (5.383244138595754,1.7004599180102935) -- (5.43800350237135,1.6914711815817043);
\draw[line width=1.2pt] (5.43800350237135,1.6914711815817043) -- (5.492762866146946,1.6827466400658544);
\draw[line width=1.2pt] (5.492762866146946,1.6827466400658544) -- (5.547522229922542,1.6742749032645547);
\draw[line width=1.2pt] (5.547522229922542,1.6742749032645547) -- (5.602281593698138,1.6660452189202308);
\draw[line width=1.2pt] (5.602281593698138,1.6660452189202308) -- (5.657040957473734,1.6580474293768273);
\draw[line width=1.2pt] (5.657040957473734,1.6580474293768273) -- (5.7118003212493305,1.6502719316512033);
\draw[line width=1.2pt] (5.7118003212493305,1.6502719316512033) -- (5.7665596850249266,1.6427096406150952);
\draw[line width=1.2pt] (5.7665596850249266,1.6427096406150952) -- (5.821319048800523,1.6353519550162474);
\draw[line width=1.2pt] (5.821319048800523,1.6353519550162474) -- (5.876078412576119,1.6281907260929813);
\draw[line width=1.2pt] (5.876078412576119,1.6281907260929813) -- (5.930837776351715,1.6212182285596493);
\draw[line width=1.2pt] (5.930837776351715,1.6212182285596493) -- (5.985597140127311,1.6144271337612413);
\draw[line width=1.2pt] (5.985597140127311,1.6144271337612413) -- (6.040356503902907,1.6078104848142236);
\draw[line width=1.2pt] (6.040356503902907,1.6078104848142236) -- (6.095115867678503,1.6013616735675864);
\draw[line width=1.2pt] (6.095115867678503,1.6013616735675864) -- (6.149875231454099,1.5950744192333621);
\draw[line width=1.2pt] (6.149875231454099,1.5950744192333621) -- (6.204634595229695,1.588942748549596);
\draw[line width=1.2pt] (6.204634595229695,1.588942748549596) -- (6.259393959005291,1.582960977351184);
\draw[line width=1.2pt] (6.259393959005291,1.582960977351184) -- (6.3141533227808875,1.5771236934351727);
\draw[line width=1.2pt] (6.3141533227808875,1.5771236934351727) -- (6.368912686556484,1.571425740617224);
\draw[line width=1.2pt] (6.368912686556484,1.571425740617224) -- (6.42367205033208,1.5658622038850964);
\draw[line width=1.2pt] (6.42367205033208,1.5658622038850964) -- (6.478431414107676,1.5604283955632323);
\draw[line width=1.2pt] (6.478431414107676,1.5604283955632323) -- (6.533190777883272,1.5551198424100354);
\draw[line width=1.2pt] (6.533190777883272,1.5551198424100354) -- (6.587950141658868,1.5499322735761645);
\draw[line width=1.2pt] (6.587950141658868,1.5499322735761645) -- (6.642709505434464,1.5448616093583123);
\draw[line width=1.2pt] (6.642709505434464,1.5448616093583123) -- (6.69746886921006,1.5399039506884993);
\draw[line width=1.2pt] (6.69746886921006,1.5399039506884993) -- (6.752228232985656,1.535055569303939);
\draw[line width=1.2pt] (6.752228232985656,1.535055569303939) -- (6.806987596761252,1.530312898547121);
\draw[line width=1.2pt] (6.806987596761252,1.530312898547121) -- (6.861746960536848,1.5256725247499197);
\draw[line width=1.2pt] (6.861746960536848,1.5256725247499197) -- (6.9165063243124445,1.5211311791593154);
\draw (1.2,0.63) node[anchor=north west] {\large{Flow Rule never satisfied}};
\end{tikzpicture}
\caption{}
\label{fig:3}
\end{figure}
This suggests to interpret $(u,c)$ rather as a quasi-static evolution of damage as stated in Theorem \ref{thm:DAMAGE}, even when the prescribed boundary datum satisfies \eqref{eq:CNS w}. In this case, the very specific nature of the plastic evolution illustrated in Figure \ref{fig:2} seems to confirm the interpretation of the evolution as one of damage. Indeed, the only configuration of perfect plasticity we obtain is very restrictive as the evolution remains purely elastic until a threshold time $t_0$, after which the stress $\sigma$ always saturates the constraint and the damage keeps on increasing, so that the elastic strain remains constant until the end of the process which is rather specific to damage than plasticity. Besides, when choosing $T > 2 \sqrt{2\kappa a_0}/a_1$ and applying a loading-unloading Dirichlet condition
$$
w : (t,x) \in [0,T] \times [0,L] \mapsto x \left( t \mathds{1}_{\left[0,\frac{T}{2} \right]}(t) + (T-t) \mathds{1}_{\left( \frac{T}{2}, T \right) } (t) \right),
$$
the response of the limit model is indeed typical of damage, as illustrated in Figure \ref{fig:loading unloading}.
\begin{figure}[hbtp]
\begin{tikzpicture}[x=0.34cm,y=0.34cm]
\clip(-14,-0.5) rectangle (52,33);
\draw [line width=1.pt] (0.,30.)-- (0.,18.);
\draw [line width=1.pt] (0.,12.)-- (0.,0.);
\draw [line width=1.pt] (-2.,20.)-- (22.,20.);
\draw [line width=1.pt] (-2.,2.)-- (22.,2.);
\draw [line width=0.5pt,dotted] (-1.,26.)-- (22.,26.);
\draw [line width=0.5pt,dotted] (-1.,10.)-- (22.,10.);
\draw [line width=1.pt] (10.011127673837626,29.01895684052162)-- (0.,20.);
\draw [line width=1.pt] (10.011127673837626,29.01895684052162)-- (20.,20.);
\draw [line width=0.5pt,dotted] (6.66005693398481,26.)-- (6.526069415276971,0.8939270110483071);
\draw [line width=0.5pt,dotted] (10.011127673837626,29.01895684052162)-- (9.969841269281858,0.8804251052739491);
\draw [line width=1.pt] (0.,30.)-- (-0.3,29.4);
\draw [line width=1.pt] (0.,30.)-- (0.3,29.4);
\draw [line width=1.pt] (22.,20.)-- (21.4,20.3);
\draw [line width=1.pt] (22.,20.)-- (21.4,19.7);
\draw [line width=1.pt] (0.,12.)-- (-0.3,11.4);
\draw [line width=1.pt] (0.,12.)-- (0.3,11.4);
\draw [line width=1.pt] (22.,2.)-- (21.4,2.3);
\draw [line width=1.pt] (22.,2.)-- (21.4,1.7);
\draw [line width=1.5pt,color=qqqqff] (0.,20.)-- (6.6280357926218905,20.);
\draw [line width=1.5pt,color=ffqqqq] (9.997894563301076,20.)-- (20.,20.);
\draw [line width=1.5pt,color=qqqqff] (0.,2.)-- (6.531972368533127,2.);
\draw [line width=1.5pt,color=ffqqqq] (9.97148397127033,2.)-- (20.,2.);
\draw [line width=1.pt] (0.,2.)-- (6.574667223683688,10.);
\draw [line width=1.pt] (6.574667223683688,10.)-- (9.983222012172881,10.);
\draw [line width=1.pt] (9.983222012172881,10.)-- (20.,2.);
\draw (-1.85,33) node[anchor=north west] {$\big[ w(t) \big]^L_0$};
\draw (-1.3,14.332010719735797) node[anchor=north west] {$\sigma(t)$};
\draw (8.8,31.3) node[anchor=north west] {$ \frac{TL}{2}$};
\draw (22.,21) node[anchor=north west] {$t$};
\draw (19.,20.) node[anchor=north west] {$T$};
\draw (6.5,20.) node[anchor=north west] {$t_0$};
\draw (9.8,20.) node[anchor=north west] {$\frac{T}{2}$};
\draw (0.,20.) node[anchor=north west] {$0$};
\draw (0.,2.) node[anchor=north west] {$0$};
\draw (6.5,2.) node[anchor=north west] {$t_0$};
\draw (9.8,2.) node[anchor=north west] {$\frac{T}{2}$};
\draw (19.,2) node[anchor=north west] {$T$};
\draw (22.,3.) node[anchor=north west] {$t$};
\draw [color=qqqqff](2.0051421534379403,20) node[anchor=north west] {$\mu \equiv 0$};
\draw [color=qqqqff](2.0051421534379403,2) node[anchor=north west] {$\mu \equiv 0$};
\draw [color=ffqqqq](12.795368775947116,20.) node[anchor=north west] {$\mu \equiv \mu\left( {\scriptstyle \frac{T}{2}} \right)$};
\draw [color=ffqqqq](12.850420952592572,2.) node[anchor=north west] {$\mu \equiv \mu\left( {\scriptstyle \frac{T}{2}} \right)$};
\draw (-6.748153933189402,27.1) node[anchor=north west] {$\sqrt{2 \kappa a_0} \frac{L}{a_1}$};
\draw (-5.867319106862122,11.) node[anchor=north west] {$\sqrt{2 \kappa a_0}$};
\draw (2.9,29.) node[anchor=north west] {Loading};
\draw (11.8,29.) node[anchor=north west] {Unloading};
\draw [line width=0.5pt] (2.019075312479522,4.456793925881183)-- (2.,6.);
\draw [line width=0.5pt] (2.,6.)-- (3.28451198190392,5.996566664329098);
\draw [line width=0.5pt] (14.996732936611028,5.995909318920065)-- (16.367471155191016,6.006868599090258);
\draw [line width=0.5pt] (16.367471155191016,6.006868599090258)-- (16.377516376656267,4.893132803978246);
\draw (0.5,7.9) node[anchor=north west] {$a_1$};
\draw (16.,8.) node[anchor=north west] {$- \frac{2 \sqrt{2 \kappa a_0}}{T}$};
\end{tikzpicture}
\caption{}
\label{fig:loading unloading}
\end{figure}

\noindent
Using \eqref{eq:t0 et t*0}, one can check that $t_0 = \sqrt{2 \kappa a_0}/a_1 > 0$, hence 
$$\mu \equiv 0 \quad \text{and} \quad \sigma \equiv \frac{a_1}{L} \big[ w \big]^L_0 \geq 0 \quad \text{on } [0,t_0].$$
By \eqref{eq: sigma t0}, \eqref{thm:DAMAGE} and the increasing character of $\big[ w \big]^L_0$ during the time interval $\left[t_0, \frac{T}{2} \right]$, we infer that
$$
\sigma \equiv \sqrt{2 \kappa a_0} \quad \text{on } \left[ t_0, \frac{T}{2} \right],
$$ 
which corresponds to the hardening phase. Then, since $l$ is non-decreasing and $\big[ w \big]^L_0$ is decreasing during the unloading phase, \eqref{eq:E(t)} entails that $\sigma$ decreases during the time interval $\left[ \frac{T}{2},T \right]$. In particular, $0 \leq \sigma(t) < \sqrt{2 \kappa a_0}$ for all $t \in \left( \frac{T}{2},T \right]$, so that $\mu \equiv \mu \left( \frac{T}{2} \right)$ is constant during the unloading phase and
$$
\sigma(t) \equiv \frac{ \big[w(t) \big]}{ \frac{l \left( \frac{T}{2} \right)}{a_0} + \frac{L}{a_1}} = 2 \sqrt{2 \kappa a_0} \frac{T-t}{T} \quad \text{for all } t \in \left[ \frac{T}{2}, T \right].
$$
In particular, at the end of the loading-unloading process, the medium goes back to its reference configuration, whereas in perfect plasticity one expects a residual plastic strain (see Figure \ref{fig:damage VS plasticity}). On the other hand, interpreting the evolution as one of damage also fails to be completely satisfactory, as $\left\lvert \sigma \right\rvert$ never exceeds the damage yield threshold $\sqrt{2 \kappa a_0}$, including during the hardening phase, which is specific to perfect plasticity and therefore consists in a painful lack of generality for the description of a damage evolution as well (see Figure \ref{fig:damage VS plasticity}). One could wonder if the effective model we obtained lies somehow in between damage and plasticity. Without being able to answer completely this question, let us remark that the effective evolution obtained here does not fit in the large class of elastoplasticity-damage models introduced in \cite{AMV,Cr1,Cr2} where we expect the Energy Balance to hold and the plastic yield surface to shrink as damage increases, whereas here \eqref{eq:Energy Balance} is not always satisfied and $\partial \KK =  \{ \pm \sqrt{2 \kappa a_0 } \}$ is fixed.

\begin{figure}[hbtp]
\begin{tikzpicture}[line cap=round,line join=round,>=triangle 45,x=0.315cm,y=0.315cm]
\clip(-4,-6) rectangle (48,12.);
\draw [line width=1.pt] (0.,-2.)-- (0.,10.);
\draw [line width=1.pt] (-0.6,0.)-- (9.5,0.);
\draw [line width=1.pt] (14.5,-2.)-- (14.5,10.); 
\draw [line width=1.pt] (14,0.)-- (24.5,0.); 
\draw [line width=1.pt] (-0.2,9.6)-- (0.,10.);
\draw [line width=1.pt] (0.2,9.6)-- (0.,10.);
\draw [line width=1.pt] (9.1,0.2)-- (9.5,0.);
\draw [line width=1.pt] (9.1,-0.2)-- (9.5,0.);
\draw [line width=1.pt] (14.3,9.6)-- (14.5,10.);
\draw [line width=1.pt] (14.7,9.6)-- (14.5,10.);
\draw [line width=1.pt] (24.1,0.2)-- (24.5,0.); 
\draw [line width=1.pt] (24.1,-0.2)-- (24.5,0.);
\draw [line width=1.pt] (0.,0.)-- (4.,8.);
\draw [line width=1.pt] (4.,8.)-- (8.,8.);
\draw [line width=1.pt] (8.,8.)-- (0.,0.);
\draw [line width=1.pt] (14.5,0.)-- (18.5,8.); 
\draw [line width=1.pt] (18.5,8.)-- (22.5,8.); 
\draw [line width=1.pt] (22.5,8.)-- (18.5,0.); 
\draw [line width=0.5pt] (2.3,5.2)-- (2.8390652861162837,5.6781305722325675); 
\draw [line width=0.5pt] (2.8390652861162837,5.6781305722325675)-- (2.846600633470912,5); 
\draw [line width=0.5pt] (5.6,8.3)-- (6.,8.); 
\draw [line width=0.5pt] (5.6,7.7)-- (6.,8.); 
\draw [line width=0.5pt] (5.8,6.2)-- (5.576126642533382,5.576126642533382);
\draw [line width=0.5pt] (6.15,5.75)-- (5.576126642533382,5.576126642533382);
\draw [line width=0.5pt] (16.7,5.1)-- (17.316380822082582,5.632761644165163); 
\draw [line width=0.5pt] (17.316380822082582,5.632761644165163)-- (17.3,4.85);
\draw [line width=0.5pt] (20.5,8.3)-- (21.021896241741526,8.); 
\draw [line width=0.5pt] (20.5,7.7)-- (21.021896241741526,8.);
\draw [line width=0.5pt] (20.4,4.5)-- (20.326620713215465,3.6532414264309327); 
\draw [line width=0.5pt] (21.,4.1)-- (20.326620713215465,3.6532414264309327); 
\draw (-0.9,12) node[anchor=north west] {$\sigma(t)$};
\draw (13.5,12) node[anchor=north west] {$\sigma(t)$};
\draw [line width=0.5pt,dotted] (-0.5,8.05038435200938)-- (40.,8.);
\draw (-4.5,9.075461464607216) node[anchor=north west] {$\sqrt{2\kappa a_0}$};
\draw (2.,-0.2820932650105007) node[anchor=north west] {$\frac{\sqrt{2 \kappa a_0} L}{a_1}$};
\draw (7.,-0.4770423218775365) node[anchor=north west] {$\frac{TL}{2}$};
\draw (16.7,-0.2820932650105007) node[anchor=north west] {$\frac{\sqrt{2 \kappa a_0} L}{a_1}$};
\draw (21.5,-0.4770423218775365) node[anchor=north west] {$\frac{TL}{2}$};
\draw (9.2,1.3) node[anchor=north west] {$\big[ w(t) \big]^L_0$};
\draw (24.3,1.3) node[anchor=north west] {$\big[ w(t) \big]^L_0$};
\draw (0.5,-3.) node[anchor=north west] {Effective model};
\draw (16.,-3) node[anchor=north west] {Plasticity};
\draw (32,-3) node[anchor=north west] {Damage};
\draw [line width=0.5pt] (1.,2.)-- (1.,3.); 
\draw [line width=0.5pt] (1.,3.)-- (1.5,3);
\draw [line width=0.5pt] (3.,3.)-- (3.8,3.);
\draw [line width=0.5pt] (3.8,3.)-- (3.8,3.8);
\draw [line width=0.5pt] (15.5,2.)-- (15.5,3.);
\draw [line width=0.5pt] (15.5,3.)-- (16,3);
\draw (0.1,5.) node[anchor=north west] {$\frac{a_1}{L}$};
\draw (3.9,4.) node[anchor=north west] {$\frac{2 \sqrt{2\kappa a_0}}{LT}$};
\draw (14.5,5.) node[anchor=north west] {$\frac{a_1}{L}$};
\draw [line width=1.pt] (30.,-2.)-- (30.,10.);
\draw [line width=1.pt] (29.5,0.)-- (40.,0.);
\draw [line width=1.pt] (29.8,9.6)-- (30.,10.);
\draw [line width=1.pt] (30.2,9.6)-- (30.,10.);
\draw [line width=1.pt] (39.6,0.2)-- (40.,0.);
\draw [line width=1.pt] (39.6,-0.2)-- (40.,0.);
\draw [line width=1.pt] (30.,0.)-- (34.,8.);
\draw [line width=1.pt] (34.,8.)-- (37.98512129669676,9.24390655243666);
\draw [line width=1.pt] (37.98512129669676,9.24390655243666)-- (30.,0.);
\draw [line width=0.5pt] (32.2,5.2)-- (32.88846791084805,5.776935821696096);
\draw [line width=0.5pt] (32.9,5)-- (32.88846791084805,5.776935821696096);
\draw [line width=0.5pt] (35.5,8.75)-- (36.122466843542135,8.662501895789225);
\draw [line width=0.5pt] (36.122466843542135,8.662501895789225)-- (35.7,8.2);
\draw [line width=0.5pt] (35.4,6.7)-- (35.19876466324249,6.018304913540057);
\draw [line width=0.5pt] (35.19876466324249,6.018304913540057)-- (35.9,6.3);
\draw (33,11) node[anchor=north west] {hardening};
\draw (29.098388882778064,12) node[anchor=north west] {$\sigma(t)$};
\draw (39.8,1.3) node[anchor=north west] {$\big[ w(t) \big]^L_0$};
\draw (32.217573792650626,-0.2820932650105007) node[anchor=north west] {$\frac{\sqrt{2 \kappa a_0} L}{a_1}$};
\draw (37.20826964844672,-0.4770423218775365) node[anchor=north west] {$\frac{TL}{2}$};
\draw [line width=0.5pt,dotted] (4.,8.)-- (4,-0.5);
\draw [line width=0.5pt,dotted] (8.,8.)-- (8,-0.5);
\draw [line width=0.5pt,dotted] (18.5,8.)-- (18.5,-0.5);
\draw [line width=0.5pt,dotted] (22.5,8.)-- (22.5,-0.5);
\draw [line width=0.5pt,dotted] (34.,8.)-- (34,-0.5);
\draw [line width=0.5pt,dotted] (38,9.243906552436654)-- (38,-0.5);
\end{tikzpicture}
\caption{}
\label{fig:damage VS plasticity}
\end{figure}

\medskip
Looking at the constructive proof of \cite[Theorem 2]{FG}, one could argue that passing to the limit $\e \searrow 0$ in the time-continuous quasi-static evolutions might not have been the right approach, as some incremental information (see the minimality formulae and track of the history of damage \cite[Formulae (15), (16), (21)]{FG}) available at the stage of time discretizations is lost once the time step has been sent to $0$ (see \cite{BCGS} for a related case study of non-commutability). Indeed, let us fix a time subdivision $\T_N = \{ 0 = t_0^N, ..., t_N^N = T \}$ of $[0,T]$ with
$$\delta_N := \underset{ i \in \llbracket 0,N-1 \rrbracket }{\sup} \, \left\lvert t_{i+1}^N - t_i^N \right\rvert.$$
Following minor adaptations of the present work and passing first to the limit $\e \searrow 0$, we infer the existence of a piecewise constant in time evolution
$$
(u_N,e_N,p_N,\sigma_N,\mu_N) : [0,T] \to BV((0,L)) \times L^2((0,L)) \times \M([0,L]) \times \R \times \M([0,L])
$$
with uniformly bounded variation in $N$ such that 
$$\sigma_N \in L^\infty([0,T];\KK) \quad \text{is homogeneous in space,} $$
$$
p_N = \frac{\sigma_N}{a_0} \mu_N, \quad e_N = \frac{\sigma_N}{a_1}, \quad \big[ w \big]^L_0 = \sigma_N \left( \frac{\mu_N \big( [0,L] \big)}{a_0} + \frac{L}{a_1} \right),$$
and for all $i \in \llbracket 0,N \rrbracket$
\begin{equation}\label{eq:incremental properties}
\begin{cases}
\ds \big(u_N^i, e_N^i, p_N^i \big) \in \A(w(t_i^N)), \\
\ds \E_N^i := \frac{\sigma_N^i \big[ w(t_i^N) \big]^L_0}{2} + \kappa \mu_N^i \big( [0,L] \big) =  \E(0) + \int_0^{t_i^N} \sigma_N \big[ \dot w \big]^L_0 \, ds + o_{\delta_N \searrow 0}(1), \\
\ds \mu_N^i \big( [0,L] \big) = \frac{a_0}{2 \kappa} \left(  \frac{\E_N^i}{a_0} - \frac{\kappa L}{a_1} + \sqrt{\Delta_N^i} \right),
\end{cases}
\end{equation}
where 
$$\Delta_N^i = \left( \frac{\E_N^i}{a_0} + \frac{\kappa L}{a_1} \right)^2 - \frac{2 \kappa}{a_0} \left\lvert \big[ w(t^N_i) \big]^L_0 \right\rvert^2 \geq 0$$
and $(v,\eta,q) \in \A(w(t_i^N))$ means that $v \in BV((0,L))$, $\eta \in L^2((0,L))$, $q \in \M \big( [0,L] \big)$, $Dv = \eta + q \res (0,L)$ and $q \res \{0,L\} = (w(t_i^N) - v) \big( \delta_L - \delta_0 \big)$. Moreover, as we pass to the limit $\e \searrow 0$ in the incremental minimality \cite[Formulae (15) and (16)]{FG}
$$
(u^i_{N,\e}, 1- \Theta^i_{N,\e}, a^i_{N,\e}) \in \underset{ \begin{gathered}
													  {\scriptstyle u \in w(t_i^N) + H^1_0((0,L))} \\
													  {\scriptstyle \theta \in L^\infty((0,L);[0,1])} \\
													  {\scriptstyle a \in \G_\theta(\e a_0,a^{i-1}_{N,\e})}
													\end{gathered} }{\rm argmin}  \left\{ \int_0^L \left( \frac12 a \left\lvert u' \right\rvert^2 + \frac{\kappa}{\e} \theta \Theta^{i-1}_{N,\e} \right) \, dx \right\}
$$
of the discrete evolution of \cite{FG}, one could hope that $(u_N,e_N,p_N)$ satisfies a stronger incremental minimality as in \cite[Formula (4.12)]{DMDSM}:
\begin{equation}\label{eq:better minimality}
(u^i_N, e^i_N,p^i_N) \in \underset{ (u,e,p) \in \A(w(t_i^N)) }{\rm argmin} \left\{ \int_0^L \frac12 a_1 e^2 \, dx + \sqrt{2 \kappa a_0} \left\lvert p - p^{i-1}_N \right\rvert \big( [0,L] \big) \right\}.
\end{equation}
Assume that \eqref{eq:better minimality} holds. Then, following exactly the proof of \cite[Theorem 4.5]{DMDSM} ensures the existence of a subsequence (independent of $t$, not relabeled) and a quasi-static evolution of perfect plasticity 
$$
(u,e,p) : [0,T] \to BV((0,L)) \times L^2((0,L)) \times \M( [0,L])
$$
satisfying \eqref{eq:def plasticity} and \eqref{eq:EB}, such that for all $t \in [0,T]$
$$
\begin{cases}
u_N(t) \rightharpoonup u(t) \text{ weakly-* in } BV((0,L)), \\
e_N(t) \rightharpoonup e(t) \text{ weakly in } L^2((0,L)), \\
p_N(t) \rightharpoonup p(t) \text{ weakly-* in } \M([0,L]),
\end{cases}
$$
when passing to the limit $\delta_N \rightarrow 0$. In particular, all the above quantities in \eqref{eq:incremental properties} pass to the limit as $\delta_N \to 0$. Therefore, Theorem \ref{thm:CNS FR} holds true for the quasi-static evolution $(u,e,p)$ as well. Consequently, for any prescribed boundary datum $w \in AC([0,T];H^1(\R))$ not satisfying \eqref{eq:CNS w}, we come to a contradiction. It unfortunately proves that commuting the limits in $\e$ and $N$ leads to no better statement.

\medskip
One could then argue that the One-sided Minimality of \cite[Theorem 2]{FG} might be too weak and that it might have been preferable to pass to the limit $\e \searrow 0$ in the time-continuous quasi-static evolution of \cite[Theorem 7]{GL} instead, since the Minimality condition considered in \cite[Definition 3]{GL} is stronger as stated in \cite[Remark 4]{GL}. Unfortunately again, as we are working in the one-dimensional setting, the two minimality conditions are equivalent. Indeed, let us fix $\e = 1$ for the present discussion. By optimality of the evolution \cite[Proposition 1]{FG}, there exists $\chi_n : [0,T] \to L^\infty((0,L);\{0,1\})$ non-decreasing in time such that $\chi_n(t) \rightharpoonup 1 - \Theta(t)$ weakly-* in $L^\infty((0,L))$ for all time $t \in [0,T]$. Setting $D_n := \{ \chi_n = 1 \}$, we have that for all $(A,\theta)$ such that $A \in \widehat \G_\theta ( \{ D_n (t)\}, a_0,a_1) = \G_\theta (a_0,a_1)$, $A \in G_{\tilde \theta}(a_0,a(t))$ with 
$$\tilde \theta := \frac{ \theta - (1 - \Theta(t))}{\Theta(t)} \mathds{1}_{\{ \Theta(t) > 0 \}} \in L^\infty((0,L);[0,1]).$$ Therefore, $A$ is an admissible competitor for the One-sided minimality of \cite[Theorem 2]{FG} too.

\medskip
Eventually, we conclude with a general remark by noticing that this one-dimensional analysis seems to raise the question whether Hencky perfect plasticity is distinguishable from damage or not in a static setting, as mentionned in the recent survey \cite[Section 1, p10]{M}.

\section{Appendix}\label{section:appendix}

\begin{lem}\label{lem:Convex Envelope}
Let $a,b,K >0$ with $a<b$ and $f : \xi \in \R \mapsto \min \left\{ K +  a |\xi|^2; b |\xi|^2 \right\}$. Then, for all $\xi \in \R$,
$$
\mathcal{C} f(\xi) =  \begin{cases} \vspace{0.2cm}
								 \,  b |\xi|^2 &  \text{if } |\xi| \leq \sqrt{\frac{a K}{b(b-a)}}, \\ \vspace{0.2cm}
							    \, |\xi| \sqrt{\frac{4ab K  }{b - a }} - \frac{ aK }{ b - a}  &  \text{if } \sqrt{\frac{a K}{b(b-a)}} < |\xi| \leq \frac{b}{a} \sqrt{\frac{a K}{b(b-a)}}, \\
								\, K  +  a |\xi|^2 &  \text{if } |\xi| > \frac{b}{a} \sqrt{\frac{a K}{b(b-a)}}.
								\end{cases} 
$$
\end{lem}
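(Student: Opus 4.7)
The plan is to construct the convex envelope explicitly via a \emph{common tangent} construction, exploiting that $f$ is the minimum of two convex parabolas. First observe that $f$ is even and that $b|\xi|^2 \leq K + a|\xi|^2$ exactly when $|\xi|^2 \leq K/(b-a)$, so $f$ equals the narrow parabola $\xi \mapsto b\xi^2$ for $|\xi|$ small, the wide lifted parabola $\xi \mapsto K + a\xi^2$ for $|\xi|$ large, with a convex kink at $|\xi| = \sqrt{K/(b-a)}$. Each piece is strictly convex, but $f$ fails to be globally convex precisely in a neighbourhood of this kink, and the convex envelope will be obtained by replacing that neighbourhood by a straight segment.

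I first look for the unique affine function $\ell(\xi) = m\xi + q$ (with $m > 0$) that is tangent from below to $\xi \mapsto b\xi^2$ at some $\xi_1 > 0$ and to $\xi \mapsto K + a\xi^2$ at some $\xi_2 > \xi_1$. Equality of slopes yields $2b\xi_1 = 2a\xi_2$, hence $\xi_2 = (b/a)\xi_1$; equality of $y$-intercepts then reduces to the single scalar equation
\[
-b\xi_1^2 \;=\; K - a\xi_2^2 \;=\; K - (b^2/a)\,\xi_1^2,
\]
whose unique positive solution is $\xi_1 = \sqrt{aK/(b(b-a))}$. This gives $m = 2b\xi_1 = \sqrt{4abK/(b-a)}$ and $q = -b\xi_1^2 = -aK/(b-a)$, which matches the middle branch of the formula stated in the lemma once $\xi$ is replaced by $|\xi|$ to respect the symmetry.

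I then define $g : \R \to \R$ as the three-piece function prescribed in the statement (parabola on $\{|\xi| \leq \xi_1\}$, affine in $|\xi|$ on $\{\xi_1 \leq |\xi| \leq \xi_2\}$, lifted parabola on $\{|\xi| \geq \xi_2\}$) and verify two properties. First, $g$ is convex: by the tangency conditions the values and one-sided derivatives match at each junction $\pm \xi_1, \pm \xi_2$, so $g$ is $C^1$ on $\R$ with a non-decreasing derivative, hence convex. Second, $g \leq f$: on the two parabolic regions $g = f$ by definition, while on $[\xi_1,\xi_2]$ the classical fact that the tangent to a convex function lies below it globally gives $\ell(\xi) \leq b\xi^2$ and $\ell(\xi) \leq K + a\xi^2$ at every $\xi$, so $g = \ell \leq \min\{b\xi^2,\,K + a\xi^2\} = f$ throughout; evenness handles $\xi < 0$.

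To conclude $g = \mathcal{C} f$: since $g$ is convex with $g \leq f$, the maximality of $\mathcal{C} f$ among convex minorants of $f$ gives $g \leq \mathcal{C} f$. Conversely, on the two parabolic regions $\mathcal{C} f \leq f = g$ by definition; on the segment $[\xi_1,\xi_2]$, the endpoint inequalities $\mathcal{C} f(\xi_i) \leq f(\xi_i) = g(\xi_i)$ for $i = 1,2$ combined with the convexity of $\mathcal{C} f$ and the linearity of $g$ on $[\xi_1,\xi_2]$ force $\mathcal{C} f \leq g$ inside the segment as well; evenness extends the bound to $[-\xi_2,-\xi_1]$. I do not anticipate a genuine obstacle, as the whole argument reduces to the resolution of a two-unknown tangency system followed by elementary verifications; the only step requiring a little care is checking that the slopes of the two parabolic branches agree with the slope of $\ell$ at $\pm\xi_1$ and $\pm\xi_2$, which is precisely what the common tangent conditions enforce.
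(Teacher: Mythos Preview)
Your proof is correct and complete: the common-tangent construction is the natural approach, your computation of the tangent points $\xi_1=\sqrt{aK/(b(b-a))}$ and $\xi_2=(b/a)\xi_1$ is accurate, and the two-sided sandwich argument ($g$ convex with $g\leq f$, combined with $\mathcal{C}f\leq g$ on each region) is airtight. The paper itself does not give a written proof of this lemma---it only provides a figure illustrating the two parabolas and their common tangent---so your argument supplies precisely the rigorous justification that the paper leaves to the reader.
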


\begin{figure}[hbtp]
\begin{tikzpicture}[line cap=round,line join=round,>=triangle 45,x=0.85cm,y=0.85cm]
\clip(-7,-1.2) rectangle (7,6.5);
\draw[line width=1.pt,smooth,samples=100,domain=-6:6] plot(\x,{(0.1*(\x)^(2)+2.0)});
\draw[line width=1.pt,smooth,samples=100,domain=-2.5:2.5] plot(\x,{1.0*(\x)^(2)});
\draw[line width=1pt,color=ffqqqq,smooth,samples=100,domain=-6.5:6.5] plot(\x,{(abs((\x))*sqrt(4*1.0*0.1*2.0/(1.0-0.1))-0.1*2.0/(1.0-0.1))});
\draw (2.3,5.8) node[anchor=north west] {$ y = b\left\lvert \xi \right\rvert^2 $};
\draw (-0.1,2.5) node[anchor=north west] {$\kappa$};
\draw (-6.1,1.5) node[anchor=north west,color=ffqqqq] {$y = \left\lvert \xi \right\rvert \sqrt{\frac{4ab\kappa}{b-a}} - \frac{a\kappa}{b-a}$};
\draw (-5.7,5.9) node[anchor=north west] {$y = a \left\lvert \xi \right\rvert^2 + \kappa$};
\draw (-0.1,-0.22) node[anchor=north west] {$\sqrt{ \frac{a\kappa}{b(b-a)}}$};
\draw (3.5,-0.22) node[anchor=north west] {$\frac{b}{a} \sqrt{ \frac{a\kappa}{b(b-a)}}$};
\draw [line width=1.pt] (-5.5,0.)-- (6.,0.);
\draw [line width=1.pt] (5.8,0.1)-- (6.,0.);
\draw [line width=1.pt] (5.8,-0.1)-- (6.,0.);
\draw [line width=1.pt] (0.,-1.)-- (0.,6.5);
\draw [line width=1.pt] (0.,6.5)-- (-0.1,6.2);
\draw [line width=1.pt] (0.,6.5)-- (0.1,6.2);
\draw (6.197429188010825,0.4143492978980675) node[anchor=north west] {$\xi$};
\begin{scriptsize}
\draw [fill=uuuuuu] (-4.714045166906035,4.222222183563016) circle (0.5pt);
\draw [fill=uuuuuu] (4.714045121532328,4.2222221407842735) circle (0.5pt);
\draw [fill=uuuuuu] (-0.4714045454596425,0.22222224548001215) circle (0.5pt);
\draw [fill=uuuuuu] (0.4714045447969268,0.22222224485519776) circle (0.5pt);
\draw [line width=0.7pt, dotted] (0.4714045454596425,0.22222224548001215)-- (0.4714045454596425,-0.2);
\draw [line width=0.7pt, dotted] (4.714045121532328,4.2222221407842735)-- (4.714045121532328,-0.2);
\end{scriptsize}
\end{tikzpicture}
\caption{}
\label{fig:convex envelop}
\end{figure}

\begin{prop} Let $\O \subset \R^N$ be an open bounded set with Lipschitz boundary and $A_0$ and $A_1$ be two symmetric fourth order isotropic tensors, {\it i.e.} 
$$A_i \xi = \lambda_i {\rm tr}(\xi) {\rm Id} + 2 \mu_i \xi $$
for all $\xi \in \Ms$ where $\lambda_1 > \lambda_0 > 0$ and $\mu_1 > \mu_0 > 0$ are the Lam\'e coefficients. For a fixed toughness $\kappa >0$, we consider the closed convex set 
$$\KK = \{ \tau \in \Ms \, : \, G(\tau) \leq 2 \kappa \}$$
with $G$ defined by \eqref{eq:G} in the Introduction. We define 
$$\bar W (\xi) = \left( \frac12 A_1 {\rm id}:{\rm id} \right) \infconv I_\KK^* (\xi) $$
and 
$$W_\e ( \xi) = \min \, \left\{ \frac{\kappa}{\e} + \frac12 \e A_0 \xi:\xi ; \frac12 A_1 \xi:\xi \right\}$$
for all $\e > 0$ and all $\xi \in \Ms$. Then, given a boundary datum $w \in H^1(\R^N;\R^N)$, the functional $ \F_\e : L^1(\O;\R^N) \to \bar{ \R^+}$ defined for all $u \in L^1(\O;\R^N)$ by
$$
 \F_\e(u) := \begin{cases}
{\ds \int_\O  SQ W_\e(e(u)) \, dx } & \quad \text{ if } u \in w + H^1_0(\O;\R^N) \\
+ \infty & \quad \text{ otherwise}
\end{cases}
$$
$\Gamma$-converges in $L^1(\O;\R^N)$ as $\e \searrow 0$ to the functional
$$
 \F(u) := \begin{cases}
 \begin{array}{r}
 {\ds \int_\O \bar W(e(u)) \, dx + \int_\O  I_\KK^* \left( \frac{ d E^su}{d|E^su|} \right) \, d |E^su| + \int_{\partial \O} I_\KK^* \left( (w-u) \odot \nu \right) \, d \HH^{N-1} } \\
 \quad \text{ if } u \in BD(\O)
\end{array} \\
+ \infty \quad \text{ otherwise.}
\end{cases}
$$
\end{prop}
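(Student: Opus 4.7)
The plan is to reduce this boundary-conditioned $\Gamma$-convergence to the boundary-free (interior) version by the classical extension-by-$w$ trick, and then adapt the argument of \cite[Theorem 3.1]{BIR}. Fix an open bounded set $\O'$ with Lipschitz boundary such that $\bar \O \subset \O'$. For $v \in w + H^1_0(\O;\R^N)$, let $\widetilde v$ be the extension of $v$ by $w$ to $\O'$, which lies in $H^1(\O';\R^N)$; for $v \in BD(\O)$, set $\widetilde v := v \mathds{1}_\O + w \mathds{1}_{\O' \setminus \bar \O} \in BD(\O')$, whose singular part decomposes as
$$
E^s \widetilde v = E^s v \res \O + (w - v) \odot \nu_\O \, \HH^{N-1} \res \partial \O.
$$
The starting point is the boundary-free interior $\Gamma$-convergence of $v \mapsto \int_{\O'} SQ W_\e(e(v)) \, dx$ (extended by $+\infty$ outside $H^1(\O';\R^N)$) toward $\G(v) := \int_{\O'} \overline W(e(v)) \, dx + \int_{\O'} I_\KK^*(dE^s v / d|E^s v|) \, d|E^s v|$ on $BD(\O')$, which follows from the local part of the proof of \cite[Theorem 3.1]{BIR} since $\O'$ can be chosen arbitrarily.

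\textbf{Liminf.} Take $u_\e \to u$ in $L^1(\O;\R^N)$ with $\liminf_\e \F_\e(u_\e) < +\infty$. The coercivity estimate \cite[Lemma 2.3]{BIR} shows $\{u_\e\}$ is bounded in $BD(\O)$, so $u \in BD(\O)$ and $\widetilde u_\e \to \widetilde u$ in $L^1(\O';\R^N)$. Writing
$$
\F_\e(u_\e) = \int_{\O'} SQ W_\e(e(\widetilde u_\e)) \, dx - \int_{\O' \setminus \bar \O} SQ W_\e(e(w)) \, dx,
$$
the liminf of the first term is bounded below by $\G(\widetilde u)$ via the interior result, while the second integral converges to $\int_{\O' \setminus \bar \O} \overline W(e(w)) \, dx$ by dominated convergence (pointwise $SQ W_\e(\xi) \to \overline W(\xi)$, with uniform majorant $\tfrac12 A_1 \xi : \xi \in L^1$). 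Splitting $\G(\widetilde u)$ according to the decomposition of $E^s \widetilde u$ above, the outer bulk contributions cancel and we read off $\liminf_\e \F_\e(u_\e) \geq \F(u)$.

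\textbf{Limsup and main obstacle.} For the recovery sequence, given $u \in BD(\O)$ with $\F(u) < +\infty$, the interior $\Gamma$-convergence on $\O'$ provides $\widetilde v_\e \in H^1(\O';\R^N)$ with $\widetilde v_\e \to \widetilde u$ in $L^1(\O';\R^N)$ and $\limsup_\e \int_{\O'} SQ W_\e(e(\widetilde v_\e)) \, dx \leq \G(\widetilde u)$. To enforce the Dirichlet trace $w$ on $\partial \O$, apply a De Giorgi gluing: for $\delta > 0$, pick $\varphi_\delta \in C^\infty_c(\O)$ with $\varphi_\delta \equiv 1$ on $\O_{2\delta} := \{ \mathrm{dist}(\cdot, \partial \O) > 2\delta\}$ and $|\nabla \varphi_\delta| \leq C/\delta$, set $u_\e^\delta := \varphi_\delta \widetilde v_\e + (1 - \varphi_\delta) w \in w + H^1_0(\O;\R^N)$, and extract a diagonal $\delta(\e) \to 0$ by a Fonseca-Müller argument. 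The main obstacle lies here: the gluing annulus $\O \setminus \O_{2\delta(\e)}$ must absorb the singular mass $(w - u) \odot \nu_\O \, \HH^{N-1} \res \partial \O$ of $E \widetilde u$ without producing spurious elastic cost. This requires exploiting the infimal-convolution structure of $\overline W$ so that the large strains generated by $\nabla \varphi_\delta$ are charged to the plastic potential $I_\KK^*$ rather than the elastic quadratic $\tfrac12 A_1 \cdot : \cdot$, while carefully controlling the cross-terms from the product rule; this is precisely the point where the adaptation of \cite[Theorem 3.1]{BIR} incorporating the boundary term is non-trivial.
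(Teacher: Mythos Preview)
Your $\Gamma$-liminf via extension by $w$ to a larger domain $\O'$ matches the paper's approach and is correct.

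For the $\Gamma$-limsup, however, you take a genuinely different route and the step you flag as ``non-trivial'' is in fact a gap you do not close. The De Giorgi cutoff $u_\e^\delta = \varphi_\delta \widetilde v_\e + (1-\varphi_\delta) w$ generates the cross-term $\nabla \varphi_\delta \odot (\widetilde v_\e - w)$; even with the uniform linear-growth bound $SQW_\e(\xi) \leq C(1+|\xi|)$, its cost on the transition layer is of order $\delta^{-1}\|\widetilde v_\e - w\|_{L^1(\O\setminus\O_{2\delta})}$. After $\e \to 0$ this becomes $\delta^{-1}\|u - w\|_{L^1(\O\setminus\O_{2\delta})}$, which for a generic $u \in BD(\O)$ with $u \neq w$ on $\partial\O$ does \emph{not} vanish as $\delta \to 0$. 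Worse, the interior recovery sequence $\widetilde v_\e$ must concentrate energy near $\partial\O$ to reproduce the singular measure $(w-u)\odot\nu_\O\,\HH^{N-1}\res\partial\O$, and your cutoff discards precisely this concentration; a Fonseca--M\"uller layer-averaging does not recover it, since the obstruction is structural rather than a matter of selecting a good layer. The intuition you offer---charging the large strains to $I_\KK^*$ via the infimal-convolution structure---applies to $\overline W$, not to $SQW_\e$ at fixed $\e$, and is not turned into an estimate.

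The paper bypasses this obstacle by a different reduction. It invokes the approximation result \cite[Theorem 3.5]{Mora} to produce triples $(u_k,e_k,p_k)$ with $u_k - w \in C^\infty_c(\O;\R^N)$, $e(u_k)=e_k+p_k$, $e_k \to e$ strongly in $L^2(\O;\Ms)$, $p_k \rightharpoonup p$ weakly-$*$ in $\M(\bar\O;\Ms)$ and $\int_\O |p_k|\,dx \to |p|(\bar\O)$. Reshetnyak's continuity theorem then yields $\F(u_k) \to \F(u)$, and since each $u_k$ is smooth and already lies in $w + C^\infty_c(\O;\R^N)$, the recovery sequence for $u_k$ is supplied directly by \cite[Proposition 3.3]{BIR} with no gluing needed; a diagonal argument concludes. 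This density-plus-Reshetnyak step is the missing idea in your proposal and is what makes the boundary term appear cleanly without any cutoff surgery.
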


\begin{proof} 
The proof of the $\Gamma$-lim inf inequality presents no particular difficulty and we do not give the details of its proof. The key point is to extend approximating sequences $u_k \to u$ by $w$ in a larger open bounded set $\O \subset \subset \O'$ and rely on \cite[Theorem 3.1]{BIR} in $\O'$ to conclude. 

\medskip
The proof of the $\Gamma$-lim sup inequality relies on the approximation result \cite[Theorem 3.5]{Mora} in $BD$, which allows us to reduce to the case where $u \in w + C^\infty_c(\O;\R^N)$. Indeed, let $u \in BD(\O)$. Using the Radon-Nikod\'ym decomposition of $E u$ with respect to Lebesgue and owing to a measurable selection criterion together with the definition of the inf-convolution, there exist $e$ and $p^a$ in $ L^2(\O;\Ms)$ such that 
\begin{gather*}
Eu = e(u) \LL^N \res \O + E^s u, \quad e(u) = e + p^a \\
\text{and }\quad  \bar W (e(u)) = \frac12 A_1 e:e + I_\KK^*(p^a) \quad \LL^N \text{-a.e. in } \O.
\end{gather*}
We define $p = E^s u \mathds{1}_{\O} + p^a \LL^N \res \O + (w-u) \odot \nu \HH^{N-1} \res \partial \O \in \M(\bar \O;\Ms)$. Following the proof of \cite[Theorem 3.5]{Mora} with minor adaptations, one can find a sequence 
$$(u_k,e_k,p_k) \in LD(\O) \times L^2(\O;\Ms) \times L^1(\O;\Ms)$$
such that 
\begin{gather*}
e(u_k) = e_k + p_k \quad \text{for all } k \in \N ,\\
(u_k - w, e_k - e(w), p_k) \in C^\infty_c (\O;\R^N) \times C^\infty_c (\O;\Ms) \times C^\infty_c (\O;\Ms) 
\end{gather*}
and 
\begin{gather*}
u_k \to u \quad \text{strongly in } L^1(\O;\R^N), \quad e_k \to e \quad \text{strongly in } L^2(\O;\Ms), \\
p_k \rightharpoonup p \quad \text{weakly-* in } \M \big( \bar \O;\Ms \big) \quad \text{and} \quad \int_\O \left\lvert p_k \right\rvert \, dx \to \left\lvert p \right\rvert \big( \bar \O \big)
\end{gather*}
when $k \nearrow \infty$. By Reshetnyak's Continuity Theorem (see \cite[Theorem 2.39]{AFP}), we get that
\begin{multline*}
\int_\O I_\KK^*(p_k) \, dx \to \int_{\bar \O} I_\KK^* \left( \frac{ d p}{d \left\lvert p \right\rvert } \right) \, d \left\lvert p \right\rvert \\
= \int_\O I_\KK^*(p^a) \, dx + \int_\O I_\KK^* \left( \frac{ d E^s u}{d \left\lvert E^s u \right\rvert} \right) \, d \left\lvert E^s u \right\rvert + \int_{\partial \O} I_\KK^* \left( (w-u) \odot \nu \right) \, d \HH^{N-1}
\end{multline*}
when $k \nearrow \infty$. Therefore,  by definition of the inf-convolution and by lower semi-continuity of the $\Gamma$-upper limit, denoted by $\F''$, if $\F'' (u_k) \leq \F(u_k)$ we would get that
\begin{multline*}
 \limsup_{ k \nearrow \infty} \int_\O \bar W \big(e(u_k) \big) \, dx \\
 \leq \int_\O \frac12 A_1 e:e \, dx + \int_\O I_\KK^*(p^a) \, dx + \int_\O I_\KK^* \left( \frac{ d E^s u}{d \left\lvert E^s u \right\rvert} \right) \, d \left\lvert E^s u \right\rvert  \\
 + \int_{\partial \O} I_\KK^* \left( (w-u) \odot \nu \right) \, d \HH^{N-1} = \F(u).
\end{multline*}
Therefore, we can assume without loss of generality that $u \in w + C^\infty_c(\O;\R^N)$ and conclude the proof of the upper bound arguing as in \cite[Proposition 3.3]{BIR}.
\end{proof}

\end{document}